\newtheorem{thm}{Theorem}[section]
\newtheorem{lemma}[thm]{Lemma}
\newtheorem{cor}[thm]{Corollary}
\newtheorem{prop}[thm]{Proposition}
\theoremstyle{definition}
\newtheorem{example}[thm]{Example}
\newtheorem{remark}[thm]{Remark}
\newtheorem{definition}[thm]{Definition}
\newtheorem{notation}[thm]{Notation}
\numberwithin{equation}{section}
\long\def\blankfootnotetext#1{\begingroup\def\thefootnote{\fnsymbol{footnote}}\footnotetext{#1}\endgroup}
\newcommand{\orig}{\mathbf{0}}
\newcommand{\Z}{\mathbb{Z}}
\newcommand{\Q}{\mathbb{Q}}
\newcommand{\Proj}{\mathbb{P}}
\newcommand{\Hom}[1]{\mathrm{Hom}\mleft({#1}\mright)}
\renewcommand{\gcd}[1]{\mathrm{gcd}\mleft\{{#1}\mright\}}
\newcommand{\cone}[1]{\mathrm{cone}\mleft({#1}\mright)}
\newcommand{\conv}[1]{\mathrm{conv}\mleft({#1}\mright)}
\newcommand{\dual}[1]{{#1}^\vee}
\newcommand{\mut}{\mathrm{mut}}
\newcommand{\NQ}{N_\Q}
\newcommand{\MQ}{M_\Q}
\newcommand{\quiv}{\mathrm{\textbf{quiv}}}
\newcommand{\bquiv}{\mathrm{\textbf{bquiv}}}
\newcommand{\fanos}{\mathfrak{F}}
\newcommand{\verts}[1]{\mathrm{verts}\mleft(#1\mright)}
\newcommand{\spanf}[1]{\Sigma_{#1}}
\newcommand{\sref}{\Sigma}
\newcommand{\hmin}{h_\mathrm{min}}
\newcommand{\hmax}{h_\mathrm{max}}
\newcommand{\arr}[1]{\mathrm{a}(#1)}
\newcommand{\vol}{\mathrm{vol}}
\newcommand{\into}[1]{\mathrm{in}\mleft(#1\mright)}
\newcommand{\outof}[1]{\mathrm{out}\mleft(#1\mright)}
\newcommand{\basket}{\mathcal{B}}
\newcommand{\lab}[1]{\scriptstyle{\mathbf{(#1)}}}
\newcommand{\radist}[1]{r(#1)}
\newcommand{\seq}[1]{\mathrm{seq}(#1)}
\newcommand{\ind}[1]{\mathrm{g}(#1)}
\begin{document}
\tikzset{middlearrow/.style={
        decoration={markings,
            mark= at position 0.6 with {\arrow{#1}} ,
        },
        postaction={decorate}
    }
}
\author[Mohammad E.~Akhtar]{Mohammad E.~Akhtar}
\blankfootnotetext{2010 \emph{Mathematics Subject Classification}: 14J45 (Primary); 52B20, 14J33 (Secondary).}
\title{Polygonal Quivers}
\maketitle
\begin{abstract}
We show that Fano lattice polygons define a class of balanced quivers with interesting properties. The combinatorics of these quivers is related to singularities of the underlying toric Fano surface. This allows us to show that every Fano polygon defines a point on a certain family of algebraic hypersurfaces. Our quivers admit a generalized mutation which preserves balancing and coincides with combinatorial mutation of Fano polygons whenever both operations are defined. We characterize balanced quivers arising from Fano polygons and discuss generalizations to higher dimensions.
\end{abstract}
\section{Opening Remarks}
\subsection{Overview}\label{subsec:overview}
Fano polytopes and their combinatorial mutations occupy a central position in the recent programme~\cite{CCGGK,CCGK} to classify Fano varieties using mirror symmetry. In the case of surfaces, there is expected to be a one-to-one correspondence~\cite[Conjecture~A]{AMany} between mutation classes of Fano polygons and $\Q$-Gorenstein deformation classes of Fano orbifolds.

This discussion is about quivers arising from Fano polygons. In Section~\ref{sec:main_construction} we define the class of \emph{polygonal quivers}. Our definition brings together, and extends, earlier proposals appearing in theoretical physics~\cite{FHHU,HV05} and classification theory~\cite{KNP15}. The polygonal quivers of $\Proj^2$ and $\Proj(1,1,6)$ are shown in Figure~\ref{fig:P2_P116_quivers}.
\begin{figure}[h!]
    \centering
    \begin{tikzpicture}[thick,scale=0.8, every node/.style={transform shape}]
      \coordinate  (P2v1) at (-2, -1);
      \coordinate [label={below:\small $(1,1)$}]  (P2v1_lab) at (-2, -1.15);
      \coordinate  (P2v2) at (-4, -1);
      \coordinate [label={below:\small $(1,1)$}]  (P2v2_lab) at (-4, -1.15);
      \coordinate  (P2v3) at (-3, 0.5);
      \coordinate [label={above:\small $(1,1)$}]  (P2v3_lab) at (-3, 0.65);

      \coordinate  (P116v1) at (4, -1);
      \coordinate [label={below:\small $(2,3)$}]  (P116v1_lab) at (4, -1.15);
      \coordinate  (P116v2) at (2, -1);
      \coordinate [label={below:\small $(1,1)$}]  (P116v2_lab) at (2, -1.15);
      \coordinate  (P116v3) at (3, 0.5);
      \coordinate [label={above:\small $(1,1)$}]  (P116v3_lab) at (3, 0.65);

      \draw [fill=black] (P2v1) circle (0.05);
      \draw (P2v1) circle (0.1);
      \draw [fill=black] (P2v2) circle (0.05);
      \draw (P2v2) circle (0.1);
      \draw [fill=black] (P2v3) circle (0.05);
      \draw (P2v3) circle (0.1);

      \draw [fill=black] (P116v1) circle (0.05);
      \draw (P116v1) circle (0.1);
      \draw [fill=black] (P116v2) circle (0.05);
      \draw (P116v2) circle (0.1);
      \draw [fill=black] (P116v3) circle (0.05);
      \draw (P116v3) circle (0.1);

      \draw [middlearrow={latex}] (P2v2) -- node[below] {$3$} (P2v1);
      \draw [middlearrow={latex}] (P2v1) -- node[right] {\hspace{1mm}$3$} (P2v3);
      \draw [middlearrow={latex}] (P2v3) -- node[left] {\hspace{-6mm}$3$} (P2v2);

      \draw [middlearrow={latex}] (P116v2) -- node[below] {$4$} (P116v1);
      \draw [middlearrow={latex}] (P116v1) -- node[right] {\hspace{1mm}$4$} (P116v3);
      \draw [middlearrow={latex}] (P116v3) -- node[left] {\hspace{-6mm}$8$} (P116v2);

   \end{tikzpicture}
   \caption{The Polygonal Quivers for $\Proj^{2}$ (left) and $\Proj(1,1,6)$ (right).}\label{fig:P2_P116_quivers}
\end{figure}
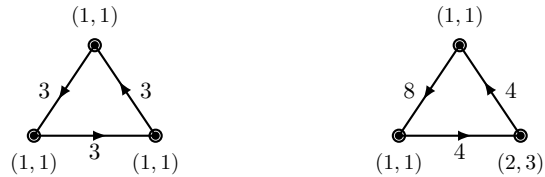

Notice that our quivers are decorated: each vertex is labelled by a pair $(w,\ell) \in \Z^2$. They also contain no self-loops or $2$-cycles (Example~\ref{eg:no_self_loops_or_2-cycles}). In Section~\ref{sec:balancing_condition}, we establish the \emph{balancing condition} for polygonal quivers (Proposition~\ref{prop:balancing_condition}). A special case of this is already known for reflexive polygons: the number of arrows into a given vertex is equal to the number of arrows out of that vertex. We extend this to all Fano polygons, by considering arrows weighted by vertex labels. This has a natural interpretation in terms of diameters of Fano polygons.

Section~\ref{sec:mutations} is about mutations: we extend classical quiver mutation~\cite{Sei94,FZ1} to balanced quivers (decorated quivers which satisfy the balancing condition). The class of balanced quivers is closed under this extended mutation (Proposition~\ref{prop:balanced_quivers_are_closed_under_mutation}), but a polygonal quiver may mutate to something non-polygonal (Example~\ref{eg:polygonal_quivers_are_not_closed_under_mutation}). We characterize precisely when this happens (Proposition~\ref{prop:quiv_mutation_commutes_with_comb_mutation_at_Tcones}) by relating our extended quiver mutations to combinatorial mutations of Fano polygons~\cite{ACGK12}. From this perspective, the failure to remain polygonal is measured by the number of vertices corresponding to residual singularities, introduced in~\cite{AK14}. We discuss mutation invariants coming from the arrows and vertex labels of our quivers (Propositions~\ref{prop:arrow_invariant} and~\ref{prop:weight_invariant}) and observe a group structure on mutations (Proposition~\ref{prop:mutation_group}).

Section~\ref{sec:quiver_degree_formula} adopts a more geometric viewpoint: every Fano polygon defines a toric Fano surface. We compute the anticanonical degree of this surface in terms of polygonal quivers. Combining this with the Noether-type formula given in~\cite[Proposition~3.3]{AK14} we obtain the \emph{quiver degree formula}~\eqref{eqn:quiver_degree_formula}, relating the singularities of a toric Fano surface to the combinatorics of its polygonal quiver. This allows us to establish non-existence results (Example~\ref{eg:non_existence_of_fanos_with_prescribed_singularities}) and show that every Fano polygon defines a point on a certain family of algebraic hypersurfaces (Proposition~\ref{prop:markov_varieties}). This generalizes earlier results~\cite{HP10,AK13} relating Fano triangles to solutions of Markov-type Diophantine equations.

In Section~\ref{sec:reconstruction_and_characterization} we characterize balanced quivers arising from Fano polygons (Theorem~\ref{thm:reconstruction_general}). The case of triangles is simpler, and is treated separately (Proposition~\ref{prop:reconstruction_triangles}). We use the notion of \emph{expected volume}~\eqref{eqn:expected_volume_triangles} arising in the triangle setting to demonstrate families of non-polygonal quivers (Example~\ref{eg:family_of_non_polygonal_quivers}). The paper concludes with a view towards higher dimensions.

\begin{remark}
Unless otherwise stated, we will only consider quivers with finitely many vertices and finitely many arrows between any pair of vertices. This condition is automatically satisfied for polygonal and block-polygonal quivers (Definitions~\ref{def:quiv_definition} and~\ref{def:bquiv_definition}). It allows us to avoid convergence issues, for instance in the definition of balanced quivers appearing in Section~\ref{sec:mutations}. A number of definitions and results, particularly those in Section~\ref{sec:mutations}, would remain valid (with essentially the same proofs) if finiteness of vertices was replaced with the condition that the set 
\end{remark}

\subsection{Background and Notation}\label{subsec:background}
Let $N$ be a lattice of rank $2$. A $2$-dimensional lattice polytope $P \subset \NQ:=N \otimes_{\Z} \Q$ which contains the origin in its strict interior is called a \emph{Fano polygon}~\cite{KN12} if every vertex $v$ of $P$ is primitive: $\conv{\orig,v} \cap N = \{\orig,v\}$. The \emph{spanning fan} of a Fano polygon $P$ is the following complete fan in $\NQ$:
\[
    \spanf{P}:=\{\mathrm{cone}(\tau) \subset \NQ \mid \tau \text{ is a proper face of }P\}.
\]
If $\sigma \subset \NQ$ is a $2$-dimensional strictly convex, rational polyhedral cone with primitive generators $u,v$ then the \emph{inner normal vector} of $\sigma$ is the unique primitive vector $m_{\sigma} \in M:= \Hom{N,\Z}$ which defines the hyperplane containing both $u,v$ and satisfies $\langle m_{\sigma},u\rangle = \langle m_{\sigma},v\rangle < 0$. The integer $\ell_{\sigma}:= - \langle m_{\sigma},u\rangle$ is the \emph{local index} of $\sigma$ and $w_{\sigma}:= \mleft|\conv{u,v} \cap N\mright| - 1$ is called the \emph{width} of $\sigma$. Consider the following division, which depends only on $\sigma$:
\begin{equation}\label{eqn:width_divided_by_local_index}
    w_{\sigma} = \alpha\,\ell_{\sigma} + \rho \quad \text{with} \quad 0 \leq \rho < \ell_{\sigma}.
\end{equation}
Set $\varepsilon = 0$ if $\rho = 0$ and $\varepsilon = 1$ otherwise. Then, by~\cite[Proposition~2.3]{AK14}, $\sigma$ admits a \emph{standard refinement} with $\alpha + \epsilon$ maximal subcones. This is obtained by drawing rays through $\alpha +  \varepsilon - 1$ primitive lattice points lying in the strict interior of the line segment $\conv{u,v}$. The elements of $\{\sigma_i\}$, the multiset of maximal subcones in this refinement, are precisely $\alpha$ \emph{primitive }$T$-\emph{cones}, defined by $w_{\sigma_i} = \ell_{\sigma_i}$ ($= \ell_{\sigma}$) and $\varepsilon$ \emph{residual} ($R$-)\emph{cones}, defined by $w_{\sigma_i}$ ($= \rho$) $< \ell_{\sigma_i}$ ($= \ell_{\sigma}$). The primitive $T$-cones are always isomorphic to one another, by~\cite[Proposition 2.3]{AK14}. A standard refinement of $\sigma$ is not unique in general, but the multiset $\{\sigma_i\}$ depends only on $\sigma$. The width and local index of these maximal subcones is determined by the equation~\eqref{eqn:width_divided_by_local_index} and also depends only on $\sigma$. A \emph{standard refinement} $\sref$ of $\spanf{P}$ is a choice of standard refinement for every maximal cone $\sigma \in \spanf{P}$. By construction, the maximal cones of a standard refinement are either primitive $T$-cones or $R$-cones. The polygonal quiver of a Fano polygon will be constructed from a standard refinement of its spanning fan.

\section{Polygonal Quivers}\label{sec:main_construction}

The \emph{set of Fano polygons} $\fanos$ consists of pairs $(N, P)$, where $N$ is an oriented lattice of rank $2$ and $P \subset \NQ$ is a Fano polygon.

\begin{definition}\label{def:quiv_definition}Given $(N,P) \in \mathfrak{F}$, choose a standard refinement $\sref$ of $\spanf{P}$ as in Section~\ref{subsec:background}. The vertex set of $\quiv(N,P)$ is the multiset of inner normal vectors of maximal cones in~$\sref$:
\begin{equation}\label{eqn:vertices_of_quiv}
    \verts{\quiv(N,P)} := \{m_{\sigma} \in M \mid \sigma \text{ is a maximal cone in } \sref\}.
\end{equation}
The number of arrows between $m_{\sigma}$ and $m_{\tau}$ is $\det(m_{\sigma},m_{\tau})$, the coefficient of $m_{\sigma}\wedge m_{\tau}$, pointing from $m_{\sigma}$ to $m_{\tau}$ if the determinant is positive and from $m_{\tau}$ to $m_{\sigma}$ otherwise. Each $m_{\sigma}$ is decorated by $(w_{\sigma},\ell_{\sigma})$.
\end{definition}

Any decorated quiver lying in the image of $\quiv$ is called a \emph{polygonal quiver}.

Definition~\ref{def:quiv_definition} is independent of the choice of standard refinement $\sref$: the set underlying~\eqref{eqn:vertices_of_quiv} is equal to the set of inner normal vectors of maximal cones in $\spanf{P}$; a vector $m$ in this set is the inner normal vector of some maximal cone $\sigma \in \spanf{P}$, and its multiplicity in~\eqref{eqn:vertices_of_quiv} is equal to the total number of primitive $T$ and $R$-cones in a standard subdivision of $\sigma$. The vertex labels are independent of $\sref$ because, as observed in Section~\ref{subsec:background}, they are completely determined by equations of the form~\eqref{eqn:width_divided_by_local_index}, which depend only on the maximal cones $\sigma \in \spanf{P}$.



\begin{example}\label{eg:P1xP1_and_P(1,1,2)}Let $N = \Z^{2}$ with the standard orientation. Consider the Fano polygon $P_1 \subset \NQ = \Q^{2}$ with vertex set $\{(0,-1)$, $(1,0)$, $(0,1)$, $(-1,0)\}$, whose spanning fan $\spanf{P_1}$ defines $\Proj^1 \times \Proj^1$ as a toric variety. The maximal cones of $\spanf{P_1}$ are all smooth and (by convention) are taken to be primitive $T$-cones with $w = \ell = 1$. Thus the standard refinement $\sref_1$ coincides with $\spanf{P_{1}}$. The inner normal vectors of maximal cones in $\sref_1$ are $m_1 = (1,1)^{t}, m_2 = (-1,1)^{t}, m_3 = (-1,-1)^{t}$ and $m_4 = (1,-1)^{t}$ ; $Q_1:=\quiv(\Z^{2},P_1)$ is shown in Figure~\ref{fig:P1xP1_P112_quivers} (top).
\begin{figure}[h!]
    \centering
    \begin{tikzpicture}[thick,scale=0.8, every node/.style={transform shape}]
      \coordinate  (OP1P1) at (-3.5, 2);
      \coordinate  (P1P1v1) at (-3.5, 1);
      \coordinate  (P1P1out1) at (-3.5, 0.5);
      \coordinate  (P1P1v2) at (-2.5, 2);
      \coordinate  (P1P1out2) at (-2, 2);
      \coordinate  (P1P1v3) at (-3.5, 3);
      \coordinate  (P1P1out3) at (-3.5, 3.5);
      \coordinate  (P1P1v4) at (-4.5, 2);
      \coordinate  (P1P1out4) at (-5, 2);
      \coordinate  [label={left:\small $m_1$}]  (P1P1m1) at (-4,1.25);
      \coordinate  [label={right:\small $m_2$}] (P1P1m2) at (-3,1.25);
      \coordinate  [label={right:\small $m_3$}] (P1P1m3) at (-3,2.65);
      \coordinate  [label={left:\small $m_4$}]  (P1P1m4) at (-4,2.65);

      \coordinate  (P1P1arrowh) at (-0.5, 2);
      \coordinate  (P1P1arrowt) at (0.5, 2);

      \coordinate  (OP112) at (-3.5, -2);
      \coordinate  (P112v1) at (-4.5, -2);
      \coordinate  (P112out1) at (-5, -2);
      \coordinate  (P112v2) at (-1.5, -3);
      \coordinate  (P112out2) at (-1, -3.25);
      \coordinate  (P112v3) at (-2.5, -2);
      \coordinate  (P112out3) at (-2, -2);
      \coordinate  (P112v4) at (-3.5, -1);
      \coordinate  (P112out4) at (-3.5, -0.5);
      \coordinate  [label={right:\small $m_1$}] (P112m1) at (-2,-2.35);
      \coordinate  [label={below:\small $m_2$}] (P112m2) at (-3.5,-2.5);
      \coordinate  [label={right:\small $m_3$}] (P112m3) at (-3,-1.35);
      \coordinate  [label={left:\small $m_4$}]  (P112m4) at (-4,-1.35);

      \coordinate  (P112arrowh) at (-0.5, -2);
      \coordinate  (P112arrowt) at (0.5, -2);

      \draw [fill=black] (P1P1v1) circle (0.05);
      \draw [fill=black] (P1P1v2) circle (0.05);
      \draw [fill=black] (P1P1v3) circle (0.05);
      \draw [fill=black] (P1P1v4) circle (0.05);
      \draw [fill=black] (OP1P1) circle (0.05);
      \draw [thick] (P1P1v1) -- (P1P1v2) -- (P1P1v3) -- (P1P1v4) -- (P1P1v1);
      \draw [loosely dotted, thin] (OP1P1) -- (P1P1out1);
      \draw [loosely dotted, thin] (OP1P1) -- (P1P1out2);
      \draw [loosely dotted, thin] (OP1P1) -- (P1P1out3);
      \draw [loosely dotted, thin] (OP1P1) -- (P1P1out4);
      \draw [->,thick] (P1P1arrowh) -- node[above] {$\quiv$} (P1P1arrowt);

      \node at (2.5,3) (quivP1P1m1) {\small$(1,1)$};
      \node at (2.5,1) (quivP1P1m2) {\small$(1,1)$};
      \node at (4.5,1) (quivP1P1m3) {\small$(1,1)$};
      \node at (4.5,3) (quivP1P1m4) {\small$(1,1)$};
      \draw [->, thick] (quivP1P1m1) -- node[left] {$2$} (quivP1P1m2);
      \draw [->, thick] (quivP1P1m2) -- node[below] {$2$} (quivP1P1m3);
      \draw [->, thick] (quivP1P1m3) -- node[right] {$2$} (quivP1P1m4);
      \draw [->, thick] (quivP1P1m4) -- node[above] {$2$}(quivP1P1m1);

      \draw [fill=black] (P112v1) circle (0.05);
      \draw [fill=black] (P112v2) circle (0.05);
      \draw [fill=black] (P112v3) circle (0.05);
      \draw [fill=black] (P112v4) circle (0.05);
      \draw [thick] (P112v1) -- (P112v2) -- (P112v3) -- (P112v4) -- (P112v1);
      \draw [fill=black] (OP112) circle (0.05);
      \draw [loosely dotted, thin] (OP112) -- (P112out1);
      \draw [loosely dotted, thin] (OP112) -- (P112out2);
      \draw [loosely dotted, thin] (OP112) -- (P112out3);
      \draw [loosely dotted, thin] (OP112) -- (P112out4);
      \draw [->,thick] (P112arrowh) -- node[above] {$\quiv$} (P112arrowt);

      \node at (2.5,-1) (quivP112m1) {\small$(1,1)$};
      \node at (2.5,-3) (quivP112m2) {\small$(1,1)$};
      \node at (4.5,-3) (quivP112m3) {\small$(1,1)$};
      \node at (4.5,-1) (quivP112m4) {\small$(1,1)$};
      \draw [->, thick] (quivP112m1) -- node[above] {$2$} (quivP112m4);
      \draw [->, thick] (quivP112m2) -- node[left] {$2$} (quivP112m1);
      \draw [->, thick] (quivP112m2) -- node[below] {$2$} (quivP112m3);
      \draw [->, thick] (quivP112m3) -- node[right] {$2$}(quivP112m4);
      \draw [->, thick] (quivP112m4) -- node[below] {$4$}(quivP112m2);
   \end{tikzpicture}
\caption{The Polygonal Quivers for $\Proj^{1}\times\Proj^{1}$ (top) and $\Proj(1,1,2)$ (bottom).}\label{fig:P1xP1_P112_quivers}
\end{figure}
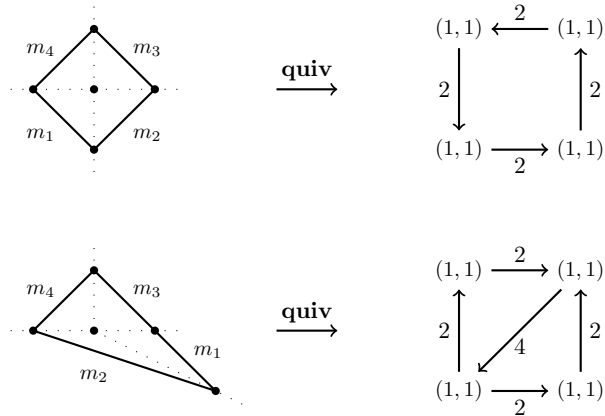

Next, consider $P_2 \in \NQ$ with vertex set $\{(-1,0)$, $(2,-1)$, $(0,1)\}$, which defines $\Proj(1,1,2)$. Consider $\sigma:=\cone{(2,-1),(0,1)} \in \spanf{P_{2}}$ which defines a $\frac{1}{2}(1,1)$ cyclic quotient singularity~\cite[Section~2.2]{Ful93}. Following~\cite[Lemma~2.2]{AK14}, we have $w_{\sigma}= 2$ and $\ell_{\sigma} =1$. Division gives $w_{\sigma} = 2\cdot\ell_{\sigma} + 0$, so $\sigma$ must be subdivided into $2$ primitive $T$-cones via a crepant blow-up through $(1,0)$. The $1$-skeleton of the standard refinement $\sref_2$ consists of rays through $(-1,0)$, $(2,-1)$, $(1,0)$ and $(0,1)$, and the inner normal vectors are $m_1 = (-1,-1)^{t}$, $m_2 = (1,3)^{t}$, $m_3 = (-1,-1)^{t}$ and $m_4 = (1,-1)^{t}$ ; $Q_2:=\quiv(\Z^{2},P_2)$ is shown in Figure~\ref{fig:P1xP1_P112_quivers} (bottom).

Both $P_1,P_2$ are reflexive polygons; notice that every vertex $m$ of $Q_1,Q_2$ satisfies a balancing condition: the number of arrows into $m$ is equal to the number of arrows out of $m$.
\end{example}

\begin{example}The opposite quiver of a polygonal quiver (with the same labels on its vertices) is also polygonal. It arises from the same Fano polygon by reversing the orientation of the ambient lattice.
\end{example}

\begin{example}\label{eg:no_self_loops_or_2-cycles}A polygonal quiver $Q$ never contains self loops or $2$-cycles. This is immediate from Definition~\ref{def:quiv_definition}: the number of arrows from $m \in \verts{Q}$ to itself is $\det(m,m) = 0$, and if $m_1,m_2 \in \verts{Q}$ then (by definition) there are exactly $\det(m_1,m_2)$ arrows between these two vertices. Moreover all of these arrows point in the same direction, determined by the sign of the determinant. The incidence matrix of a polygonal quiver is therefore an exchange matrix for a cluster algebra of geometric type.
\end{example}

\begin{example}\label{eg:normalized_volume_from_quiver}The normalized volume of a Fano polygon $P \subset \NQ$ can be calculated from its polygonal quiver $Q:=\quiv(N,P)$. To see this, observe that:
\begin{equation}\label{eqn:normalized_volume_from_cones}
    \vol(P) = \sum{\vol(\conv{\orig,p,q})},
\end{equation}
where the sum is taken over all maximal cones $\sigma$ (with primitive generators $p,q$) in a standard refinement $\Sigma$ of $\spanf{P}$. Here $\vol$ is the normalized volume, defined to take value $1$ on an empty $2$-simplex in $\NQ$. Now let $\sigma \in \Sigma$ be a maximal cone. There exist coprime positive integers $r,a$ such that $\sigma$ is isomorphic to the cone in $\Z^{2}$ with primitive generators $(0,1),(r,-a)$. Thus $\vol(\conv{\orig,p,q}) = r$. Geometrically, $\sigma$ defines a $\frac{1}{r}(1,a)$ cyclic quotient singularity (see for instance~\cite[Section~2.1]{Ful93}), and it follows from~\cite[Lemma~2.2]{AK14} that $r = w_{\sigma}\ell_{\sigma}$. Thus, by~\eqref{eqn:normalized_volume_from_cones}:
\begin{equation}\label{eqn:normalized_volume_from_quiver}
    \vol(P) = \sum{w_{\sigma}\ell_{\sigma}},
\end{equation}
where the sum is taken over all vertices of $Q$: by Definition~\ref{def:quiv_definition}, these are in one-to-one correspondence with maximal cones $\sigma \in \Sigma$. In particular, the right hand side of~\eqref{eqn:normalized_volume_from_quiver} can be calculated directly from the polygonal quiver $Q$.
\end{example}

\begin{notation}Let $Q$ be a quiver with no $2$-cycles, so that all arrows between any pair of vertices of $Q$ have the same head and tail. Given $m_1,m_2 \in \verts{Q}$, let
\[
    \arr{m_1,m_2 ; Q} = \arr{m_1,m_2} := \mathrm{sgn}(m_1,m_2)\cdot A,
\]
where $A$ is the number of arrows between $m_1$ and $m_2$. The quantity $\mathrm{sgn}(m_1,m_2)$ equals $1$ if all arrows point from $m_1$ to $m_2$ and equals $-1$ otherwise.
\end{notation}

\section{Polygonal Quivers are Balanced}\label{sec:balancing_condition}
The number of arrows between two vertices of a polygonal quiver can be understood in terms of the Fano polygon from which it is constructed. This viewpoint shows that polygonal quivers satisfy a balancing condition at each vertex. In order to correctly formulate this condition, suppose that $Q:=\quiv(N,P)$ is constructed from a standard refinement $\sref$ of $\spanf{P}$. Choose a vertex $m$ of $Q$, at which the balancing condition will be studied. There are two distinguished subsets of vertices of $Q$ determined by $m$:
\begin{align*}
     \outof{m} &:=\{h \in \verts{Q} \mid h \text{ receives an arrow from } m\},\\
     \into{m}  &:=\{t \in \verts{Q} \mid m \text{ receives an arrow from } t\}.
\end{align*}
We may also write $\outof{m;Q}$ and $\into{m;Q}$ if we wish to emphasize the quiver. These sets are disjoint and do not contain $m$ by the observations made in Example~\ref{eg:no_self_loops_or_2-cycles}. They are also nonempty, because every edge of a Fano polygon has at least one non-parallel edge on either side of it. In terms of the Fano polygon $P$, $m \in M$ is the inner normal vector of a maximal cone in $\sref$. In particular, it determines a height function on $\NQ$, so we may take $\hmax$ to be $\sup\{\langle m, x\rangle \mid x \in P\}$ and define $\hmin$ similarly. The \emph{diameter} of $m \in \verts{Q}$ is the integer $D(m):=\hmax - \hmin$. Note that since $P$ contains the origin in its strict interior, we always have $\hmax > 0$ and $\hmin < 0$. Therefore $D(m)$ is always strictly positive. Recalling that every $q \in \verts{Q}$ carries additional data $(w_q, \ell_q) \in \Z^{2}_{\geq 1}$, we have:

\begin{prop}\label{prop:balancing_condition}In the above notation:
\begin{equation}\label{eqn:balancing_condition}
    D(m) = \sum_{h \in \outof{m}}w_h\cdot \arr{m,h} = \sum_{t \in \into{m}}w_t\cdot \arr{t,m}.
\end{equation}
\end{prop}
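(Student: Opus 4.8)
The plan is to establish the identity by interpreting everything in terms of the geometry of the spanning fan $\spanf{P}$ and its standard refinement $\sref$. Fix the vertex $m$ of $Q$, corresponding to a maximal cone $\sigma \in \sref$ with primitive generators $u,v$. The key observation is that the diameter $D(m) = \hmax - \hmin$ measures the ``height'' of $P$ in the direction $m$, and this can be computed edge-by-edge by walking around the boundary of $P$: starting from the edge dual to $\sigma$ (where $\langle m, \cdot\rangle = \hmin$) and proceeding counterclockwise to the vertex (or edge) where $\langle m,\cdot\rangle = \hmax$ gives $\hmax - \hmin$, and proceeding clockwise gives the same quantity with the opposite sign convention. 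This produces the two sums in~\eqref{eqn:balancing_condition}, one for each direction of travel. The main task is to show that each step of this walk contributes exactly $w_h \cdot \arr{m,h}$.

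First I would reduce to a single cone computation. Let $\tau \in \sref$ be a maximal cone adjacent (in the sense of sharing a ray, after passing to the common refinement) or more precisely such that its dual edge is ``visible'' from the walk; the relevant structure is that as we traverse $\partial P$, consecutive edges $e, e'$ have inner normals $m_e, m_{e'}$, and the lattice length contribution to the height change across $e'$ is governed by $\det(m_e, m_{e'})$ together with the width $w_{e'}$. Concretely, if $e'$ is an edge with primitive inner normal $m_\tau$ and lattice width $w_\tau$, and if $p$ is the vertex of $P$ between $e$ and $e'$, then the change in the value of $\langle m, \cdot \rangle$ as $x$ moves along $e'$ equals $w_\tau$ times $\det(m, m_\tau)$ divided by an appropriate normalization coming from how $e'$ sits relative to $m$. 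I would make this precise by choosing coordinates: pick a basis so that $m = (1,0)^t$; then for any edge with primitive inner normal $n = (a,b)^t$ and primitive edge-direction vector $d$, we have $\det(m,n) = -b$ while the edge vector spanning $e'$ is $w_\tau d$ with $\langle m, w_\tau d\rangle = \pm w_\tau b$ (the sign being $\mathrm{sgn}(m,n)$), which is exactly $w_\tau \cdot \arr{m, n}$ up to sign. Summing these signed contributions around the ``upper'' boundary arc from $\hmin$ to $\hmax$ telescopes to $\hmax - \hmin = D(m)$, and all intermediate normals appearing are precisely the vertices $h$ with $\arr{m,h} > 0$, i.e. the elements of $\outof{m}$; the ``lower'' arc gives $\into{m}$.

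The subtlety I expect to be the main obstacle is bookkeeping the refinement correctly: the vertices of $Q$ are normals of maximal cones of $\sref$, not of $\spanf{P}$, so a single edge of $P$ may be subdivided into several sub-edges (the $T$-cones and $R$-cone of its standard refinement). I need to check that (a) these sub-edges all have the \emph{same} inner normal $m_\tau$ (which is true, since refining by adding rays through interior lattice points of an edge does not change the supporting hyperplane), so they contribute the same direction of arrow, and (b) their widths $w_{\tau_i}$ sum to $w_\tau$, the width of the original edge — this follows from~\eqref{eqn:width_divided_by_local_index} since $w_\sigma = \alpha \ell_\sigma + \rho$ and the $\alpha$ primitive $T$-cones each have width $\ell_\sigma$ while the $R$-cone has width $\rho$, so $\alpha \ell_\sigma + \rho = w_\sigma$. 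Thus each original edge's total contribution $w_\tau \cdot \arr{m,m_\tau}$ is correctly distributed among the multiset of $Q$-vertices sharing that normal, and the sums in~\eqref{eqn:balancing_condition} are well-defined as sums over the multiset $\verts{Q}$. Once this correspondence is set up, both equalities in~\eqref{eqn:balancing_condition} are just the statement that the total rise equals the total fall along the two complementary boundary arcs, which holds because $P$ is a closed polygon.
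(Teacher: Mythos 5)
Your proposal is correct and follows essentially the same route as the paper: choose coordinates so that $m$ is a standard basis covector, identify each term $w_h\cdot\arr{m,h} = w_h\det(m,m_h)$ with the change in the height function $\langle m,\cdot\rangle$ across the corresponding edge, and telescope along the two boundary arcs from $\hmin$ to $\hmax$. The only cosmetic difference is that the paper walks directly over the sub-edges cut out by the standard refinement (so your bookkeeping step about sub-edges sharing a normal and widths summing is absorbed automatically), whereas you walk over the edges of $P$ and then redistribute; both handle the multiset of $Q$-vertices correctly.
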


\begin{proof}Choose an orientation preserving isomorphism between $N$ and $\Z^{2}$ such that $m = (0,1)^{t}$. The situation is illustrated in Figure~\ref{fig:proof_of_balancing_condition}.

\begin{figure}[h!]
    \centering
    \begin{tikzpicture}[thick,scale=0.8, every node/.style={transform shape}]
        \coordinate [label={right:\small\hspace{1mm}$\orig$}] (O) at (0,0);
        \draw [fill=black] (O) circle (0.05);

        \coordinate [label={below:\small $v_0$}] (v0) at (0,-1.5);
        \draw [fill=black] (v0) circle (0.05);
        \coordinate [label={below:\small $v_1$}] (v1) at (1,-1.5);
        \draw [fill=black] (v1) circle (0.05);
        \coordinate [label={right:\small $v_2$}] (v2) at (2,-0.5);
        \draw [fill=black] (v2) circle (0.05);
        \coordinate [label={right:\small $v_s$}] (vs) at (2,0.5);
        \draw [fill=black] (vs) circle (0.05);
        \coordinate [label={above:\small $v_{s+1}$}] (vs1) at (1,1.5);
        \draw [fill=black] (vs1) circle (0.05);
        \coordinate [label={above:\small $v_r$}] (vr) at (-0.5,1.5);
        \draw [fill=black] (vr) circle (0.05);
        \coordinate [label={left:\small $v_{r+1}$}] (vr1) at (-1.5,0.5);
        \draw [fill=black] (vr1) circle (0.05);
        \coordinate [label={left:\small $v_k$}] (vk) at (-1.5,-0.5);
        \draw [fill=black] (vk) circle (0.05);
        \coordinate [label={left:\small $v_{k+1}$}] (vk1) at (-0.5,-1.5);
        \draw [fill=black] (vk1) circle (0.05);

        \draw [thick] (v0) -- (v1) -- (v2);
        \draw [dashed,thin] (v2) -- (vs);
        \draw [thick] (vs) -- (vs1);
        \draw [dashed,thin] (vs1) -- (vr);
        \draw [thick] (vr) -- (vr1);
        \draw [dashed,thin] (vr1) -- (vk);
        \draw [thick] (vk) -- (vk1);
        \draw [dashed,thin] (vk1) -- (v0);

        \coordinate (amt) at (0.5,-1.5);
        \coordinate [label={above:\small $m_0 = m$}] (amh) at (0.5,-1);
        \draw [->,thick] (amt) -- (amh);

        \coordinate (rb) at (4,-1.5);
        \coordinate (rm) at (4,0);
        \coordinate (rt) at (4,1.5);
        \draw [|-|,thick] (rb) -- node[right] {$-\hmin$} (rm);
        \draw [ -|,thick] (rm) -- node[right] {$\hmax$} (rt);

        \coordinate (lb) at (-3.5,-1.5);
        \coordinate (lt) at (-3.5,1.5);
        \draw [|-|,thick] (lb) -- node[left] {$D(m)$} (lt);
    \end{tikzpicture}
\caption{The Proof of Proposition~\ref{prop:balancing_condition}.\label{fig:proof_of_balancing_condition}}
\end{figure}
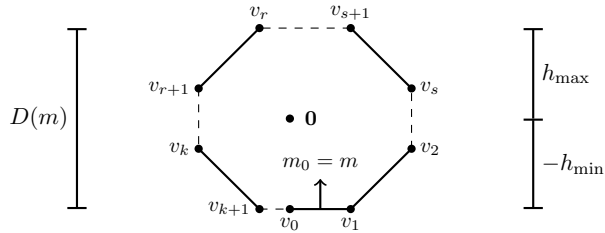
The $v_i$ in Figure~\ref{fig:proof_of_balancing_condition} are primitive lattice vectors determined by the rays of $\sref$. Let $m_i$ denote the inner normal vector of $\sigma_i:=\cone{v_i,v_{i+1}}$, so that $m = m_0$, and write $(w_i,\ell_i)$ for $(w_{\sigma_i},\ell_{\sigma_i})$. In this setup, we have $\outof{m} = \{m_1,\ldots,m_s\}$ and $\into{m} = \{m_r,\ldots,m_k\}$. The first equality follows from the following statement: if $i \in \{1,\ldots,s\}$ then the quantity $w_i\cdot\arr{m,m_i}$ is equal to $\langle m, v_{i+1}\rangle$ - $\langle m, v_i \rangle$~i.e.~to the lattice height, with respect to $m$, of $v_{i+1}$ above $v_i$. To see this, suppose $v_i = (x_i,y_i)$ and $v_{i+1} = (x_{i+1},y_i + y_{i+1})$ in the chosen basis. Note that $y_{i+1} > 0$ by construction. The line segment $\conv{v_i,v_{i+1}}$ has width $w_i$, and hence $m_i = (-y_{i+1}/w_i, (x_{i+1}-x_i)/w_i)^{t}$. Therefore, the quantity $w_i\cdot\arr{m,m_i}$ is equal to:
\begin{equation}\label{eqn:balancing_misc}
    w_i\cdot\det(m,m_i) = \det\mleft(\begin{array}{cc}0 & -y_{i+1}\\1 & x_{i+1}-x_i\end{array}\mright) = y_{i+1},
\end{equation}
and the right hand side of~\eqref{eqn:balancing_misc} is equal to $\langle m, v_{i+1}\rangle$ - $\langle m, v_i \rangle$, as claimed. The second equality now follows from an almost identical argument to the one just given.
\end{proof}

\begin{example}\label{eg:balancing_for_reflexive_polygons}A Fano polygon $P$ is called \emph{reflexive}~\cite{Bat94} if $\ell_{\sigma} = 1$ for all maximal cones $\sigma \in \Sigma_{P}$. The spanning fan of a reflexive polygon admits a unique standard refinement, and the maximal cones of this refinement are all smooth; in particular, their width is equal to $1$. Thus, if $Q$ is the polygonal quiver of a reflexive polygon, then every vertex $m$ of $Q$ carries the label $(1,1)$, and the balancing condition reduces to the statement that the number of arrows into $m$ is equal to the number of arrows out of $m$. See Example~\ref{eg:P1xP1_and_P(1,1,2)}.
\end{example}

\section{Mutation of Balanced Quivers}\label{sec:mutations}
A quiver with no self-loops or $2$-cycles is said to be \emph{decorated} if its vertices carry the additional data of a pair $(w,\ell) \in \Z^{2}$. The \emph{underlying quiver} of a decorated quiver is obtained by forgetting its vertex labels. A decorated quiver $Q$ is \emph{balanced} if every vertex $m$ of $Q$ satisfies the second equality in~\eqref{eqn:balancing_condition}. If $m$ is a vertex of a balanced quiver, then the \emph{diameter} of $m$ is denoted $D(m)$ and is defined by the first equality in~\eqref{eqn:balancing_condition}.

\begin{remark}\label{rem:comparison_of_balanced_and_polygonal_quivers}
Every polygonal quiver is balanced by Definition~\ref{def:quiv_definition} and Proposition~\ref{prop:balancing_condition}. However, for any choice of integer $a \geq 0$ and $\ell_1, \ell_2 \in \Z$, the following balanced quiver is not polygonal:
\begin{figure}[h!]
    \centering
    \begin{tikzpicture}[thick,scale=0.8, every node/.style={transform shape}]
      \node at (-2,0) (v1) {\small$(0,\ell_1)$};
      \node at (2,0)  (v2) {\small$(0,\ell_2)$};

      \draw [->, thick] (v1) -- node[above] {$a$} (v2);
   \end{tikzpicture}
\end{figure}

\noindent This follows from the observation that every polygonal quiver must have at least three vertices. To produce further examples of non-polygonal balanced quivers, note that every vertex label $(w, \ell)$ of a polygonal quiver satisfies some inequalities: $w,\ell \geq 1$, since the width and local index of a cone are positive, and $w \leq \ell$, which follows from the division~\eqref{eqn:width_divided_by_local_index}. See also Example~\ref{eg:family_of_non_polygonal_quivers} for a family of balanced quivers whose vertex labels satisfy both these inequalities, but which are not polygonal.
\end{remark}

Mutation at a vertex~\cite{Sei94,FZ1} is an operation defined on the underlying quiver of any decorated quiver. We begin by extending this operation to balanced quivers, by describing an accompanying transformation of the vertex labels.

\begin{definition}\label{def:mutation_of_balanced_quivers}Let $Q$ be a balanced quiver. Choose a vertex $m$ of $Q$ with label $(w, \ell) \in \Z^{2}$. The mutation of $Q$ at $m$ is denoted $\mut_{m}(Q)$ and is the decorated quiver whose underlying quiver is the (usual) mutation of the underlying quiver of $Q$ at $m$. A vertex $v \neq m$ of $\mut_{m}(Q)$ is decorated with the same label as in $Q$ and the vertex $m$ in $\mut_{m}(Q)$ is decorated with the label $(D(m)- w, D(m) - \ell)$.
\end{definition}

\begin{example}
The quiver in Figure~\ref{fig:P1xP1_P112_quivers} (bottom) is obtained by mutating the quiver in Figure~\ref{fig:P1xP1_P112_quivers} (top) at the top-left vertex, which has diameter $2$. Notice that the Fano polygon in Figure~\ref{fig:P1xP1_P112_quivers} (bottom) is a combinatorial mutation~\cite{ACGK12} of the the Fano polygon in Figure~\ref{fig:P1xP1_P112_quivers} (top) with respect to the width vector $m_1$ and a factor of unit length. Thus, in this example, constructing combinatorial mutations commutes with the map $\quiv$. See Proposition~\ref{prop:quiv_mutation_commutes_with_comb_mutation_at_Tcones}.
\end{example}

\begin{prop}\label{prop:balanced_quivers_are_closed_under_mutation}If $Q$ is a balanced quiver then $\mut_{m}(Q)$ is balanced for any $m \in \verts{Q}$. Thus the class of balanced quivers is closed under mutation.
\end{prop}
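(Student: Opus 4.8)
The plan is to encode the balancing condition as a single linear‑algebraic statement about the exchange matrix, and then to verify directly that quiver mutation, together with the prescribed change of labels, preserves it.

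First I would fix notation. For a decorated quiver $Q$ (which has no self‑loops or $2$‑cycles, by our standing convention) let $B=B(Q)=(b_{vu})_{v,u\in\verts{Q}}$ be the integer matrix with $b_{vu}:=\arr{v,u;Q}$; it is skew‑symmetric and is precisely the exchange matrix of the underlying quiver, so that $\outof{v}=\{u:b_{vu}>0\}$, $\into{v}=\{u:b_{vu}<0\}$, and in~\eqref{eqn:balancing_condition} one has $\arr{m,h}=b_{mh}$ for $h\in\outof{m}$ and $\arr{t,m}=-b_{tm}$ for $t\in\into{m}$. Splitting the two sums in~\eqref{eqn:balancing_condition} according to the sign of $b_{vu}$, the balancing condition at $v$ is equivalent to $\sum_u b_{vu}w_u=0$, where $w_u$ denotes the first coordinate of the label of $u$. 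Thus $Q$ is balanced if and only if $B\vect{w}=0$, where $\vect{w}=(w_u)$; and when this holds, $D(v)=\sum_{b_{vu}>0}b_{vu}w_u=\tfrac12\sum_u\abs{b_{vu}}w_u$ for every vertex $v$.

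Next I would carry out the verification. Write $B'=B(\mut_m(Q))$ and let $\vect{w}'$ be the label vector of $\mut_m(Q)$, so $w'_u=w_u$ for $u\neq m$ and $w'_m=D(m)-w_m$ by Definition~\ref{def:mutation_of_balanced_quivers}. Since the underlying quiver of $\mut_m(Q)$ is the usual mutation of that of $Q$, $B'$ is the Fomin--Zelevinsky mutation of $B$: $b'_{vu}=-b_{vu}$ whenever $v=m$ or $u=m$, and $b'_{vu}=b_{vu}+\tfrac12\bigl(\abs{b_{vm}}b_{mu}+b_{vm}\abs{b_{mu}}\bigr)$ otherwise; in particular $B'$ is again skew‑symmetric with zero diagonal, so $\mut_m(Q)$ is once more a decorated quiver and it remains to show $B'\vect{w}'=0$. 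The $m$‑th coordinate is immediate: $(B'\vect{w}')_m=\sum_{u\neq m}(-b_{mu})w_u=-(B\vect{w})_m=0$. For $v\neq m$, substitute the formula for $b'_{vu}$: the terms $b_{vu}w_u$ with $u\neq m$, together with $b'_{vm}w'_m=-b_{vm}(D(m)-w_m)$, reassemble as $-b_{vm}D(m)+\sum_u b_{vu}w_u=-b_{vm}D(m)+(B\vect{w})_v=-b_{vm}D(m)$, while the remaining contribution is $\tfrac12\abs{b_{vm}}\sum_{u\neq m}b_{mu}w_u+\tfrac12 b_{vm}\sum_{u\neq m}\abs{b_{mu}}w_u$. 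By the previous paragraph $\sum_{u\neq m}b_{mu}w_u=(B\vect{w})_m=0$ and $\sum_{u\neq m}\abs{b_{mu}}w_u=2D(m)$, so this equals $b_{vm}D(m)$; adding, $(B'\vect{w}')_v=0$. Hence $\mut_m(Q)$ is balanced. One can also argue more conceptually using the factorization of matrix mutation into elementary matrices (Fomin--Zelevinsky), under which the prescribed label change is exactly the elementary operation carrying $\vect{w}'$ back to $\vect{w}$, so that $B'\vect{w}'$ is a product of $B\vect{w}=0$ with an elementary matrix.

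The computation is entirely mechanical, so I do not expect a genuine obstacle; the only point needing care is keeping the sign conventions consistent — the Fomin--Zelevinsky formula, the orientation convention built into $\arr{\,\cdot\,,\,\cdot\,}$, and which side of the division~\eqref{eqn:width_divided_by_local_index} one works on. Two remarks I would record alongside the proof: nothing here uses positivity of the labels or the inequality $w\le\ell$, so the statement genuinely holds for all balanced quivers, not just polygonal ones; and since $(B'\vect{w}')_m=0$ with $D_{\mut_m(Q)}(m)=\sum_{b_{mu}<0}\abs{b_{mu}}w_u=D(m)$, the diameter at the mutated vertex is unchanged, which explains why $\mut_m$ is an involution at $m$ (consistent with $D(m)-(D(m)-w)=w$).
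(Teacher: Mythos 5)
Your proof is correct and is essentially the paper's argument: both substitute the mutation rule for the arrows at a vertex $v\neq m$ into the balancing sum and regroup it as the balancing relation at $v$ in $Q$ plus $\arr{v,m}$ times the relation defining $D(m)$ at $m$. Packaging this as $B\vect{w}=0$ via the Fomin--Zelevinsky matrix formula is only a tidier bookkeeping device --- it absorbs the paper's case split between $v\in\into{m}$ and $v\in\outof{m}$ and makes the sign conventions automatic --- but the underlying computation is the same (the paper itself records the kernel formulation $\ker(E)$ in a later remark).
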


\begin{proof}For convenience, set $Q' = \mut_{m}(Q)$. Then $Q'$ is balanced at $m$ by the following three observations: $Q$ is balanced at $m$, the label of any vertex $v \neq m$ of $Q'$ is the same as in $Q$ and mutation of the underlying quiver at $m$ merely reverses the arrows incident at $m$. It remains to show that $Q'$ is balanced at every vertex $v \neq m$. Note that balancing at $v$ is equivalent to the vanishing of the following quantity:
\begin{equation}\label{eqn:sum_formula_for_balancing_condition}
    S(v;Q') := \sum_{y \in \verts{Q'}}w_y\cdot\arr{v,y;Q'},
\end{equation}
where the vertex $y$ carries the label $(w_y,\ell_y)$. Choose a vertex $v_0 \neq m$ of $Q'$. If $v_0$ is not connected to $m$ in $Q$ then the arrows incident at $v_0$ will not change under mutation at $m$, and the labels of vertices connected to $v_0$ will also not change. Since $Q$ is balanced at $v_0$, it then follows that $Q'$ is balanced at $v_0$. Suppose now that $v_0$ is connected to $m$ in $Q$. Then $v_0$ lies in exactly one of the sets $\into{m;Q}, \outof{m;Q}$, defined in Section~\ref{sec:balancing_condition}. Suppose $v_0 \in \into{m;Q}$, as the other case is almost identical.

Let $v_1,\ldots,v_n$ be the vertices of $Q$ different from $v_0$ and $m$, labelled such that $\outof{m;Q} = \{v_1,\ldots,v_k\}$ for some integer $k$ satisfying $0 < k \leq n$. Let the label of $m$ in $Q$ be $(w,\ell)$, and the label of $v_i$ in $Q$ be $(w_i,\ell_i)$ for $i = 0,\ldots,n$. Given vertices $x, y$ we simplify notation by setting $A(x,y) = \arr{x,y;Q'}$ and $\arr{x,y} = \arr{x,y;Q}$. Then by definition of quiver mutation, the arrows incident at $v_0$ in $Q'$ can be enumerated as follows:
\begin{itemize}
    \item
    $A(v_0,m) = -\arr{v_0,m}$;
    \item
    $A(v_0,y) = \arr{v_0,m}\arr{m,y} + \arr{v_0,y}$, if $y \in \outof{m}$;
    \item
    $A(v_0,y) = \arr{v_0,y}$ otherwise.
\end{itemize}
Substituting these quantities into Equation~\eqref{eqn:sum_formula_for_balancing_condition}, the quantity $S(v_0;Q)$ equals:
\[
    -(D(m) - w)\cdot\arr{v_0,m} + \sum_{j = 1}^{k}{w_j}\cdot\mleft(\arr{v_0,m}\arr{m,v_j} + \arr{v_0,v_j}\mright) + \sum_{j = k+1}^{n}w_j\cdot\arr{v_0,v_j},
\]
were $D(m)$ is the diameter of $m$ in Q. This can be rearranged as follows:
\[
    \mleft[w\cdot\arr{v_0,m} + \sum_{j = 1}^{n}{w_j\cdot\arr{v_0,v_j}}\mright] + \arr{v_0,m}\mleft[-D(m) + \sum_{j = 1}^{k}w_j\cdot\arr{m,v_j}\mright].
\]
The quantity inside the left bracket equals $S(m;Q)$, and equals zero because $Q$ is balanced at $m$. The quantity inside the right bracket is zero by the definition of $D(m)$ in $Q$. Thus, $S(v_0;Q') = 0$ and $Q'$ is balanced at $v_0$. We conclude that $Q' = \mut_{m}(Q)$ is balanced.
\end{proof}

\begin{example}\label{eg:polygonal_quivers_are_not_closed_under_mutation}Let $N = \Z^{2}$ with the standard orientation. The spanning fan of the Fano polygon $P \subset \Q^{2}$ with vertex set $\{(1,0),(0,1),(-1,-6)\}$ defines $\Proj(1,1,6)$ as a toric variety, and equals its own standard refinement. The associated polygonal quiver is shown on the left of Figure~\ref{fig:polygonal_quivers_are_not_closed_under_mutation}.
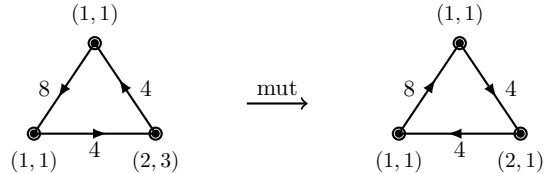
\begin{figure}[h!]
    \centering
    \begin{tikzpicture}[thick,scale=0.8, every node/.style={transform shape}]
      \coordinate  (Q1v1) at (-2, -1);
      \coordinate [label={below:\small $(2,3)$}]  (P2v1_lab) at (-2, -1.15);
      \coordinate  (Q1v2) at (-4, -1);
      \coordinate [label={below:\small $(1,1)$}]  (P2v2_lab) at (-4, -1.15);
      \coordinate  (Q1v3) at (-3, 0.5);
      \coordinate [label={above:\small $(1,1)$}]  (P2v3_lab) at (-3, 0.65);
      \draw [fill=black] (Q1v1) circle (0.05);
      \draw (Q1v1) circle (0.1);
      \draw [fill=black] (Q1v2) circle (0.05);
      \draw (Q1v2) circle (0.1);
      \draw [fill=black] (Q1v3) circle (0.05);
      \draw (Q1v3) circle (0.1);
      \draw [middlearrow={latex}] (Q1v1) -- node[right] {\hspace{1mm}$4$} (Q1v3);
      \draw [middlearrow={latex}] (Q1v3) -- node[left] {\hspace{-6mm}$8$} (Q1v2);
      \draw [middlearrow={latex}] (Q1v2) -- node[below] {$4$} (Q1v1);

      \coordinate (arrowt) at (-0.5,-0.5);
      \coordinate (arrowh) at (0.5,-0.5);
      \draw [->,thick] (arrowt) -- node[above] {$\mut$} (arrowh);

      \coordinate  (Q2v1) at (4, -1);
      \coordinate [label={below:\small $(2,1)$}]  (Q2v1_lab) at (4, -1.15);
      \coordinate  (Q2v2) at (2, -1);
      \coordinate [label={below:\small $(1,1)$}]  (Q2v2_lab) at (2, -1.15);
      \coordinate  (Q2v3) at (3, 0.5);
      \coordinate [label={above:\small $(1,1)$}]  (Q2v3_lab) at (3, 0.65);
      \draw [fill=black] (Q2v1) circle (0.05);
      \draw (Q2v1) circle (0.1);
      \draw [fill=black] (Q2v2) circle (0.05);
      \draw (Q2v2) circle (0.1);
      \draw [fill=black] (Q2v3) circle (0.05);
      \draw (Q2v3) circle (0.1);
      \draw [middlearrow={latex}] (Q2v3) -- node[right] {$\hspace{1mm}4$} (Q2v1);
      \draw [middlearrow={latex}] (Q2v1) -- node[below] {$4$} (Q2v2);
      \draw [middlearrow={latex}] (Q2v2) -- node[left] {\hspace{-6mm}$8$} (Q2v3);
   \end{tikzpicture}
\caption{The class of Polygonal Quivers is not closed under Mutation.\label{fig:polygonal_quivers_are_not_closed_under_mutation}}
\end{figure}
The vertex with label $(2,3)$ has diameter $4$, and mutation at this vertex produces the balanced quiver shown on the right of Figure~\ref{fig:polygonal_quivers_are_not_closed_under_mutation}. The mutated quiver is not polygonal, since one of it's vertices carries the label $(w,\ell) = (2,1)$, which violates the inequality: $w \leq \ell$. Thus, the class of polygonal quivers is not closed under mutation.
\end{example}

In light of Example~\ref{eg:polygonal_quivers_are_not_closed_under_mutation}, it is natural to ask the extent to which polygonal quivers fail to be closed under mutation. The vertices of a polygonal quiver $Q:=\quiv(N,P)$ can be partitioned into two types: \emph{primitive} $T$-\emph{vertices} are those which are inner normal vectors to primitive $T$-cones; they are those vertices of $Q$ whose label $(w,\ell)$ satisfies $w = \ell$. $R$-\emph{vertices} are inner normals to $R$-cones and their labels $(w,\ell)$ satisfy $w < \ell$.

\begin{prop}\label{prop:quiv_mutation_commutes_with_comb_mutation_at_Tcones}Let $m$ be a vertex of the polygonal quiver $Q:=\quiv(N,P)$. The quiver $\mut_m(Q)$ is polygonal if and only if $m$ is a primitive $T$-vertex, and in this case:
\begin{equation}\label{eqn:mutation_formula_polygonal_quivers}
    \mut_m(Q) \cong \quiv(N,\mut_m(P)),
\end{equation}
where $\mut_m(P)$ is the combinatorial mutation of $P$ with respect to the width vector $m$ and a factor of unit length, as defined in~\cite{ACGK12}.
\end{prop}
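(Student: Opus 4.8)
The plan is to prove the two directions separately, and to do the harder (``if'') direction by making the combinatorial mutation of $P$ explicit at the level of the spanning fan and its standard refinement, then checking that $\quiv$ applied to the result matches $\mut_m(Q)$ vertex-by-vertex and arrow-by-arrow.

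First I would dispose of the ``only if'' direction. Suppose $m$ is an $R$-vertex, so its label $(w,\ell)$ satisfies $w < \ell$. Under $\mut_m$ the label becomes $(D(m)-w, D(m)-\ell)$, and since $w<\ell$ we get $D(m)-w > D(m)-\ell$. By Remark~\ref{rem:comparison_of_balanced_and_polygonal_quivers}, every vertex label of a polygonal quiver must satisfy width $\leq$ local index; the mutated vertex violates this, so $\mut_m(Q)$ is not polygonal. (This is exactly the mechanism illustrated in Example~\ref{eg:polygonal_quivers_are_not_closed_under_mutation}.) Hence $\mut_m(Q)$ polygonal forces $m$ to be a primitive $T$-vertex.

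For the ``if'' direction, let $m$ be a primitive $T$-vertex, the inner normal of a primitive $T$-cone $\sigma_m = \cone{v_i, v_{i+1}}$ appearing in the chosen standard refinement $\sref$. Choose a basis as in the proof of Proposition~\ref{prop:balancing_condition} so that $m = (0,1)^t$, and recall from that proof the local picture: the primitive generators $v_0 = v_i, v_1 = v_{i+1}, \ldots$ going around $P$, with $\outof{m} = \{m_1,\ldots,m_s\}$ and $\into{m} = \{m_r,\ldots,m_k\}$ (in the notation there, the edge $\conv{v_i,v_{i+1}}$ carrying $m$ sits at the bottom). Because $\sigma_m$ is a primitive $T$-cone, $w_{\sigma_m}=\ell_{\sigma_m}$ and the cone is, up to isomorphism, of the standard $T$-singularity type $\frac1{w\ell}(1,w\ell-1)$ with $w=\ell$; this is precisely the configuration for which the combinatorial mutation $\mut_m(P)$ with width vector $m$ and a unit-length factor $F$ is defined in~\cite{ACGK12}. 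I would then write down $\mut_m(P)$ directly: the part of $P$ on the $\hmax$ side of the line $\langle m,\cdot\rangle = 0$ is left essentially alone (it gets sheared), the part on the $\hmin$ side gets the edge through $m$ replaced, and the width vector $m$ becomes an $\hmax$-side normal — geometrically the polygon is ``folded'' across $m$. The key computation is to show, using the balancing identity $D(m) = \sum_{h\in\outof m} w_h \arr{m,h} = \sum_{t\in\into m} w_t\arr{t,m}$ established in Proposition~\ref{prop:balancing_condition}, that the spanning fan of $\mut_m(P)$ has the same maximal cones as $\sref$ away from $\sigma_m$, that $\sigma_m$ is replaced by a cone whose inner normal is $-m$ (equivalently $m$ with the label $(D(m)-w,D(m)-\ell)=(D(m)-w,D(m)-w)$, still a primitive $T$-label since $w=\ell$), and that $\sref$ transported through the shearing gives a valid standard refinement of $\spanf{\mut_m(P)}$. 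Concretely, the shear $\varphi$ on the $\hmax$-halfspace acts on inner normals by $m_q \mapsto m_q - \langle m_q, \text{(shear direction)}\rangle m$, which is exactly the transformation of vectors $m_q$ for $q\in\outof m$ that, after taking determinants against $m$, reproduces the quiver-mutation rule $\arr{m,y}\mapsto -\arr{m,y}$ on arrows at $m$ and $\arr{v,y}\mapsto \arr{v,y}+\arr{v,m}\arr{m,y}$ on arrows between $y\in\outof m$ and the other vertices $v$. Checking that the determinant bookkeeping matches Definition~\ref{def:mutation_of_balanced_quivers} on the nose — including signs, and including the arrows between $\into m$ and $\outof m$ vertices — is the bulk of the argument.

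I expect the main obstacle to be this last identification: verifying that the linear map induced by the combinatorial mutation $\varphi$ on the multiset of inner-normal vectors $\verts{\quiv(N,P)}$ carries $\det(\cdot,\cdot)$ precisely to the mutated incidence matrix, with the correct treatment of the vertices in $\into m$ (which are normals on the $\hmin$ side and so transform by a different rule, or not at all) and the bookkeeping of how the edge segments subdividing $\sigma_m$ map over. A careful but routine way to organize this is: (1) show $\verts{\mut_m(Q)}$ and $\verts{\quiv(N,\mut_m(P))}$ agree as multisets via the explicit map on normals; (2) show the labels agree, using that $\varphi$ preserves widths and local indices of the unsheared cones and using~\eqref{eqn:width_divided_by_local_index} plus $D(m)$ for the cone $\sigma_m$; (3) show arrow counts agree by expanding $\det(\varphi m_x, \varphi m_y)$ and matching term-by-term with the three bullet-point cases in the proof of Proposition~\ref{prop:balanced_quivers_are_closed_under_mutation}. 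Throughout, the hypothesis $w=\ell$ for $\sigma_m$ is what guarantees both that $\mut_m(P)$ is defined (unit-length factor) and that the output stays within the polygonal class; the ``only if'' direction shows this hypothesis is sharp.
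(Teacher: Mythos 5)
Your ``only if'' direction is exactly the paper's argument (the mutated label $(D(m)-w,D(m)-\ell)$ violates $w'\leq\ell'$ when $m$ is an $R$-vertex), and your treatment of vertex labels in the ``if'' direction also matches: the paper likewise computes that $m$'s label goes from $(-\hmin,-\hmin)$ to $(\hmax,\hmax)$ and that the remaining cones are carried to isomorphic cones, so their labels persist. Where you genuinely diverge is the isomorphism of \emph{underlying} quivers: the paper simply outsources this to the argument of \cite[Proposition~3.17]{KNP15}, whereas you propose to reprove it by pushing the inner normals through the piecewise-linear map dual to the combinatorial mutation and expanding determinants. That route does work --- with $m=(0,1)^t$ and unit factor $f\in m^\perp$, the dual map fixes the normals on one side, sends $m_x\mapsto m_x+\arr{x,m}\,m$ on the other, and sends the normal of the consumed $T$-cone to $-m$; expanding $\det$ of the images reproduces precisely the three bullet-point mutation rules (the cross terms cancel for pairs on the same side). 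It buys a self-contained proof at the cost of the bookkeeping you correctly identify as the bulk of the work. One care point in your write-up: the normals are not partitioned by which height-halfspace their edges lie in but by the sign of $\langle m_\sigma,f\rangle$, i.e.\ by the $\into{m}$ versus $\outof{m}$ dichotomy (which side is sheared depends on the choice of $f$ versus $-f$); as phrased, ``the shear on the $\hmax$-halfspace acts on inner normals by\dots'' conflates the primal and dual pictures. You would also need to address the multiset subtleties the citation handles --- whether the new edge at height $\hmax$ merges with a pre-existing parallel edge, and why the transported refinement is again a standard refinement --- but these are the content of \cite[Proposition~3.6]{AK14} and \cite[Proposition~3.17]{KNP15}, not obstacles to the strategy.
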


\begin{proof}If $m$ is not a primitive $T$-vertex then it must be an $R$-vertex, and its label $(w,\ell)$ in $Q$ must satisfy $w <  \ell$. But then the label of $m$ in $\mut_m(Q)$ is $(w',\ell') := (D(m) - w, D(m) - \ell)$, and this satisfies $w' > \ell'$, showing that $\mut_m(Q)$ is not polygonal. Conversely, let $m$ be a primitive $T$-vertex. Note that $\mut_m(P)$ is a Fano polygon by~\cite[Proposition~2]{ACGK12}, so $\quiv(N,\mut_m(P))$ is a well-defined polygonal quiver. The isomorphism~\eqref{eqn:mutation_formula_polygonal_quivers} holds on the level of underlying quivers by an argument identical to the one given in~\cite[Proposition~3.17]{KNP15}. It remains to show that this isomorphism preserves vertex labels.

First consider $m$ as a vertex of $Q$. Here, $m$ is the inner normal vector of a primitive $T$-cone in $P$ whose width and local index both equal $-\hmin$ (as defined in Section~\ref{sec:balancing_condition}). Thus the label of $m$ in $Q$ is $(-\hmin,-\hmin)$, which implies that the label of $m$ in $\mut_m(Q)$ is
\[
 (D(m)+\hmin,D(m)+\hmin) = (\hmax,\hmax).
\]
Now $m$ corresponds to some vertex $m'$ of $\quiv(N,\mut_m(P))$, by the isomorphism of underlying quivers. By the definition of this isomorphism, $m'$ is the inner normal vector to a primitive $T$-cone in $\mut_m(P)$ whose width and local index both equal $\hmax$. Thus the label of $m'$ is $(\hmax,\hmax)$, which coincides with that of $m$.

Finally consider a vertex $v \neq m$ of $\mut_m(Q)$. As a vertex of $Q$, $v$ is the inner normal vector of a primitive $T$ or $R$-cone $\sigma$ in $P$. The label of $v$ in $Q$, and hence in $\mut_m(Q)$, is the width and local index of $\sigma$. The vertex $v'$ of $\quiv(N,\mut_m(P))$ which corresponds to $v$ under the isomorphism of underlying quivers, is the inner normal vector of a cone $\sigma'$ which is isomorphic to $\sigma$ (see~\cite[Proposition~3.6]{AK14}). Thus the label of $v'$ is also the width and local index of $\sigma$. We conclude that the isomorphism of underlying quivers preserves vertex labels, as claimed.
\end{proof}
The quiver of Example~\ref{eg:polygonal_quivers_are_not_closed_under_mutation} was mutated at an $R$-vertex and did not remain polygonal, as expected. Quiver mutation at an $R$-vertex does not appear to have an analogue in terms of Fano polygons since $R$-cones are, by definition, rigid under combinatorial mutations.

\begin{remark}More generally, one may consider a quiver $Q$, with no self loops or $2$-cycles, whose vertices $v$ are labelled by elements $w_v$ of some abelian group $A$. Such a $Q$ is \emph{balanced} at a vertex $m$ if the following equality holds:
\begin{equation}\label{eqn:balancing_condition_abelian}
  \sum_{h \in \outof{m}}w_h\cdot \arr{m,h ; Q} = \sum_{t \in \into{m}}w_t\cdot \arr{t,m ; Q}.
\end{equation}
Every balanced vertex $m$ of $Q$ has \emph{diameter} $D(m) \in A$, given by either side of \eqref{eqn:balancing_condition_abelian}. We can define \emph{mutation} of $Q$ at a balanced vertex $m$ exactly as in Definition~\ref{def:mutation_of_balanced_quivers}. In particular:

\begin{prop}[Mutation at a Balanced Vertex]
If $Q$ is balanced at $m$ then every balanced vertex of $Q$ remains balanced in $\mut_m(Q)$.
\end{prop}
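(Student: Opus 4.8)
The plan is to imitate, essentially verbatim, the proof of Proposition~\ref{prop:balanced_quivers_are_closed_under_mutation}, checking that nothing in that argument used the specific abelian group $\Z$ rather than an arbitrary abelian group $A$. Write $Q' = \mut_m(Q)$ and recall that balancing at a vertex $v$ is equivalent to the vanishing of $S(v;Q') := \sum_{y}w_y\cdot\arr{v,y;Q'}$, which now lives in $A$. The underlying quiver of $Q'$ is the usual mutation of the underlying quiver of $Q$ at $m$, and the vertex labels transform by $w_m \mapsto D(m) - w_m$ and $w_v \mapsto w_v$ for $v \neq m$; here $D(m) \in A$ and subtraction is the group operation. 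All of these operations make sense in $A$, and the coefficients $\arr{v,y;\cdot}$ remain integers, acting on $A$ by $n\cdot a = a + \cdots + a$ as usual, so every expression appearing in the earlier proof is a well-defined element of $A$.

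First I would dispose of the vertex $m$ itself: $Q'$ is balanced at $m$ because $Q$ is, because the labels of vertices $v \neq m$ are unchanged, and because mutation of the underlying quiver at $m$ only reverses the arrows incident at $m$ — so $S(m;Q')$ is obtained from $S(m;Q)$ by globally flipping the sign of every $\arr{m,y}$, hence $S(m;Q') = -S(m;Q) = 0$. (One should note this uses that $Q$ is balanced at $m$, which is a hypothesis, not that $Q$ is balanced everywhere.) Next, for a vertex $v_0 \neq m$ not connected to $m$ in $Q$, neither the arrows at $v_0$ nor the labels of its neighbours change, so $S(v_0;Q') = S(v_0;Q)$, which vanishes provided $Q$ was balanced at $v_0$. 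The substantive case is $v_0 \neq m$ connected to $m$; say $v_0 \in \into{m;Q}$ (the case $v_0 \in \outof{m;Q}$ being symmetric). Here I would reproduce the three bullet-point formulas for the arrows $A(v_0,\cdot)$ incident at $v_0$ in $Q'$ in terms of those in $Q$, substitute into $S(v_0;Q')$, and perform exactly the same rearrangement as before into a left bracket equal to $S(m;Q)$ and a right bracket equal to $-D(m) + \sum_{j}w_j\cdot\arr{m,v_j}$; the left bracket vanishes since $Q$ is balanced at $m$, and the right vanishes by the definition of $D(m)$ as $\sum_{h\in\outof{m}}w_h\cdot\arr{m,h}$. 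All the algebra is over $A$, and the rearrangement uses only commutativity and associativity of the group law together with $\Z$-bilinearity of $(n,a)\mapsto n\cdot a$, so it goes through unchanged.

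The only genuine subtlety — and the point I would flag explicitly — is that the statement is about every \emph{balanced} vertex of $Q$ remaining balanced, not about a globally balanced $Q$ remaining globally balanced. So in the argument above one must be careful to invoke the balancing hypothesis only where it is actually available: at $m$ (hypothesis), and at each $v_0$ under consideration (hypothesis, since we only claim $v_0$ stays balanced if it was balanced). In particular, in the "not connected to $m$" case and in the bullet-list case, the identity $S(v_0;Q') = 0$ is deduced from $S(v_0;Q) = 0$, which we are entitled to assume precisely because $v_0$ is a balanced vertex of $Q$. No other vertex's balancing is used anywhere, so the proof localizes correctly. I do not expect any real obstacle here; it is a routine generalization, and the main task is simply to state it cleanly enough that the reader sees the earlier proof applies word for word with $\Z$ replaced by $A$.
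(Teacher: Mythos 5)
Your proposal is correct and takes exactly the paper's approach: the paper's own proof of this proposition consists of the single remark that it is identical to that of Proposition~\ref{prop:balanced_quivers_are_closed_under_mutation}, and your two additions --- that the algebra only uses the $\Z$-module structure of the abelian group $A$, and that the balancing hypothesis must be invoked only at $m$ and at the particular vertex $v_0$ under consideration --- are precisely the checks needed to justify that remark. One small correction: the quantity in the left bracket of the rearranged expression is $S(v_0;Q)$, not $S(m;Q)$ (the published proof of Proposition~\ref{prop:balanced_quivers_are_closed_under_mutation} contains this same mislabelling), so it vanishes because $v_0$ is balanced in $Q$, exactly as your final paragraph states; your middle paragraph, which attributes its vanishing to balancing at $m$, should be amended accordingly, while balancing at $m$ is what makes $D(m)$ well defined and kills the right bracket.
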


The proof is identical to that of Proposition~\ref{prop:balanced_quivers_are_closed_under_mutation}. Since mutation is an involution, we have:

\begin{cor}
The number of balanced vertices remains constant whenever $Q$ is mutated at a balanced vertex.
\end{cor}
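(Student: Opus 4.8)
The plan is to deduce the corollary from the preceding proposition (mutation at a balanced vertex preserves every balanced vertex) together with the fact that $\mut_m$ is an involution on decorated quivers. Throughout, fix a decorated quiver $Q$ that is balanced at a vertex $m$, and write $\mathrm{Bal}(Q)$ for the set of balanced vertices of $Q$.

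The first step is to record that $\mut_m$ is an involution in the precise form needed here: $\mut_m(\mut_m(Q)) = Q$ as decorated quivers. On underlying quivers this is the classical fact about quiver mutation~\cite{Sei94,FZ1}. For the labels, every vertex $v \neq m$ retains its label under each of the two successive mutations (Definition~\ref{def:mutation_of_balanced_quivers}), so only the label of $m$ needs to be tracked. Passing from $Q$ to $Q' := \mut_m(Q)$ interchanges $\into{m}$ and $\outof{m}$ and reverses the arrows incident at $m$, so for $t \in \into{m;Q}$ one has $\arr{m,t;Q'} = \arr{t,m;Q}$, and since $Q$ is balanced at $m$ the diameter of $m$ computed in $Q'$ is again $D(m)$. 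Hence $m$, which carries the label $(D(m)-w, D(m)-\ell)$ in $Q'$, is relabelled $(D(m) - (D(m)-w),\, D(m) - (D(m)-\ell)) = (w,\ell)$ when $Q'$ is mutated once more at $m$. Thus $\mut_m(Q') = Q$, and in particular $m$ is a balanced vertex of $Q'$ (which also follows directly from the preceding proposition, applied to $Q$ at $m$).

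The conclusion is then immediate. Applying the proposition to $Q$ (balanced at $m$) gives $\mathrm{Bal}(Q) \subseteq \mathrm{Bal}(Q')$, where this inclusion in particular uses $m \in \mathrm{Bal}(Q)$. Applying the proposition to $Q'$ (balanced at $m$, by the previous paragraph) gives $\mathrm{Bal}(Q') \subseteq \mathrm{Bal}(\mut_m(Q')) = \mathrm{Bal}(Q)$, using involutivity for the last equality. Combining the two inclusions yields $\mathrm{Bal}(Q) = \mathrm{Bal}(Q')$, hence $|\mathrm{Bal}(Q)| = |\mathrm{Bal}(\mut_m(Q))|$, which is the assertion. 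The argument is essentially formal; the only point requiring genuine care is the verification that mutating at $m$ leaves $D(m)$ — and therefore, after a second mutation, the full label of $m$ — unchanged, and this is precisely where the hypothesis that $m$ is balanced in $Q$ is used, as it forces the ``in'' and ``out'' expressions for $D(m)$ to agree after $\into{m}$ and $\outof{m}$ are swapped.
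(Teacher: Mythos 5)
Your proof is correct and follows essentially the same route as the paper, which deduces the corollary from the proposition together with the involutivity of mutation; your two-way inclusion argument, and in particular the verification that $D(m)$ is unchanged by the first mutation (so that the label of $m$ returns to $(w,\ell)$ after the second), is exactly the detail the paper leaves implicit.
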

\end{remark}

\begin{remark}Let $Q$ be an unlabelled quiver with vertex set $m_1,\ldots,m_n$, allowing the possibility of self-loops and $2$-cycles. The $n \times n$ exchange matrix of $Q$ is $E:=(\arr{m_i,m_j})$. Fix an abelian group $A$, and label the vertex $m_i$ of $Q$ by an element $w_i$ of $A$. This gives a vector $w:=(w_1,\ldots,w_n) \in A^{n}$. Observe that $Q$ is balanced at $m_i$ with respect to the labelling $w$ (i.e.~the analogue of~\eqref{eqn:sum_formula_for_balancing_condition} vanishes) if and only if the $j^{\mathrm{th}}$ entry of $E\cdot w$ is zero. In particular, $\ker(E) \subset A^{n}$ is the space of all vertex labellings (over $A$) for which $Q$ is balanced.
\end{remark}

Let $Q$ be a quiver with no self-loops or $2$-cycles, which may or may not be decorated. Consider the positive integer $\ind{Q}:=\gcd{\arr{x,y} \mid x,y \in \verts{Q}}$. Then:

\begin{prop}\label{prop:arrow_invariant}For any vertex $m$ of $Q$ we have: $\ind{Q} = \ind{\mut_{m}(Q)}$.
\end{prop}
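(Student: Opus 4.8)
The plan is to show that both $\ind{Q} \mid \ind{\mut_m(Q)}$ and $\ind{\mut_m(Q)} \mid \ind{Q}$; since mutation is an involution, the second divisibility follows from the first applied to $\mut_m(Q)$, so it suffices to prove one direction. Recall that $\ind{Q} = \gcd\{\arr{x,y} \mid x,y \in \verts{Q}\}$, so it is enough to show that every arrow-count $\arr{x,y;\mut_m(Q)}$ is divisible by $\ind{Q}$.

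First I would recall the explicit arrow-update rules for quiver mutation at $m$, exactly as they appear in the proof of Proposition~\ref{prop:balanced_quivers_are_closed_under_mutation}: for $x,y \neq m$ we have $\arr{x,y;\mut_m(Q)} = \arr{x,m}\arr{m,y} + \arr{x,y}$ when the "cancellation" correction applies (when $x \in \into{m}$ and $y \in \outof{m}$, or symmetrically), and $\arr{x,y;\mut_m(Q)} = \arr{x,y}$ otherwise; while arrows incident to $m$ merely get reversed, so $\arr{m,y;\mut_m(Q)} = -\arr{m,y}$. In every case the right-hand side is an integer combination of quantities of the form $\arr{\cdot,\cdot;Q}$ — either a single such quantity, or a product of two of them added to a third. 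Each of those is divisible by $\ind{Q}$ by definition of the gcd, hence so is every $\arr{\cdot,\cdot;\mut_m(Q)}$. This gives $\ind{Q} \mid \ind{\mut_m(Q)}$.

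The only subtlety — and the one point where I would be careful rather than just wave at the formulas — is the sign-bookkeeping hidden inside $\arr{x,y}$ and the possibility that an arrow count in $\mut_m(Q)$ is negative, or that a $2$-cycle is created and then cancelled; one must check that the standard mutation rule really does produce a quiver without $2$-cycles here (so that $\arr{\cdot,\cdot;\mut_m(Q)}$ is well-defined as in the Notation), or else phrase the argument purely at the level of the exchange matrix $E = (\arr{m_i,m_j})$, where Fomin–Zelevinsky's matrix mutation formula $E'_{ij} = -E_{ij}$ or $E'_{ij} = E_{ij} + \tfrac{1}{2}(|E_{ik}|E_{kj} + E_{ik}|E_{kj}|)$ makes the divisibility by $\gcd$ of the entries completely transparent. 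I expect this formal check to be the main (and only) obstacle, and it is routine: the right-hand side of the matrix mutation formula is manifestly an integer linear-plus-bilinear expression in the entries of $E$, so $\gcd$ of the entries can only grow under mutation, and involutivity forces equality. Note the statement does not require $Q$ to be balanced or decorated, and indeed the argument uses nothing about vertex labels — only the combinatorics of the underlying quiver — so I would state it at that level of generality.
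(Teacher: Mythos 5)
Your proposal is correct and follows essentially the same route as the paper: write down the mutation rules for the arrow counts, observe that every entry of $\mut_m(Q)$ is an integer linear-plus-bilinear expression in the entries of $Q$ (and conversely), and conclude the two positive gcd's divide each other. Your extra care about the sign conventions in the Fomin--Zelevinsky formula is a reasonable refinement of a point the paper glosses over, but it does not change the argument.
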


\begin{proof}Given two vertices $x,y$, we simplify notation by setting $A(x,y) = \arr{x,y ; \mut_m(Q)}$ and $\arr{x,y} = \arr{x,y ; Q}$. By the definition of (usual) quiver mutation we have:
\begin{itemize}
    \item
    $A(m,y) = -\arr{m,y}$ for all $y \in \into{m} \cup \outof{m}$;
    \item
    $A(x,y) = \arr{x,m}\arr{m,y} + \arr{x,y}$ whenever $x,y \neq m$.
\end{itemize}
Since $\ind{Q} = \gcd{\arr{x,y}}$ and $\ind{\mut_{m}(Q)} = \gcd{A(x,y)}$, it follows immediately that $\ind{Q}$ divides $\ind{\mut_{m}(Q)}$ and vice versa. Both integers are positive, so they must be equal.
\end{proof}

 Suppose further that the vertices $m$ of $Q$ are decorated with integer labels $w_m$ and that $Q$ is balanced with respect to these labels (so that the diameter of each vertex is defined). Let $w(Q)$ be the positive integer $\gcd{ w_v \mid v \in \verts{Q}}$.

\begin{prop}\label{prop:weight_invariant}For any vertex $m$ of $Q$ we have $w(Q) = w(\mut_{m}(Q))$.
\end{prop}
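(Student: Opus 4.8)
The plan is an elementary greatest-common-divisor computation, using the explicit description of how mutation acts on vertex labels. By Definition~\ref{def:mutation_of_balanced_quivers}, passing from $Q$ to $\mut_{m}(Q)$ leaves the integer label $w_v$ of every vertex $v \neq m$ unchanged, and replaces the label $w_m$ of $m$ by $D(m) - w_m$. Writing $d$ for the non-negative integer $\gcd{w_v \mid v \in \verts{Q},\ v \neq m}$, we therefore have $w(Q) = \gcd{d, w_m}$ and $w(\mut_{m}(Q)) = \gcd{d, D(m) - w_m}$. It thus suffices to prove that these two quantities coincide.

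The key point is that $D(m)$ is an integer linear combination of the labels of vertices different from $m$. Indeed, by the definition of the diameter (the first equality in~\eqref{eqn:balancing_condition}) we have $D(m) = \sum_{h \in \outof{m}}w_h\cdot\arr{m,h}$, and every $h \in \outof{m}$ is different from $m$ because $Q$ has no self-loops (Example~\ref{eg:no_self_loops_or_2-cycles}). Hence $d$ divides $D(m)$, so $D(m) - w_m$ and $-w_m$ differ by a multiple of $d$. Since altering one argument of a gcd by a multiple of the other argument (and flipping signs) leaves the gcd unchanged, we obtain $\gcd{d, D(m) - w_m} = \gcd{d, w_m}$, and therefore $w(\mut_{m}(Q)) = w(Q)$.

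There is essentially no obstacle here; the only thing worth flagging is the bookkeeping observation that the diameter of $m$ depends only on the labels of the neighbours of $m$, never on $w_m$ itself. The degenerate case of a one-vertex quiver, where $\outof{m} = \emptyset$ and $D(m) = 0$, is covered by the same computation with the convention $\gcd{\emptyset} = 0$; alternatively it may simply be excluded at the outset. One could also phrase the argument via the fact that mutation is an involution, proving only $w(Q) \mid w(\mut_{m}(Q))$ and deducing the reverse divisibility, but the symmetric computation above makes this detour unnecessary.
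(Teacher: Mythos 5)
Your argument is correct and is essentially the paper's own proof: both rest on the observation that $D(m)$ is an integer combination of the labels of vertices other than $m$, so that each of $w(Q)$ and $w(\mut_m(Q))$ divides the other. Your version merely makes the gcd bookkeeping explicit.
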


\begin{proof}Let the vertex labels of $Q$ be $w_m,w_1,\ldots,w_k$. By Definition~\ref{def:mutation_of_balanced_quivers}, the vertex labels of $\mut_{m}(Q)$ are $D(m)-w_m,w_1,\ldots,w_k$, where $D(m)$ is a weighted sum of $w_1,\ldots,w_k$. Therefore $w(Q)$ divides $w(\mut_m(Q))$ and vice versa.
\end{proof}

\section{The Quiver Degree Formula for Polygonal Quivers}\label{sec:quiver_degree_formula}

\subsection{Anticanonical Degree}\label{subsec:degree_derivation}
In our proof of the balancing condition (Proposition~\ref{prop:balancing_condition}), the number of arrows between two vertices of a polygonal quiver $\quiv(N,P)$ was interpreted in terms of lattice distances in $\NQ$. There is another interpretation, in terms of volumes, which is more natural from the perspective of toric geometry. This viewpoint allows the anticanonical degree of toric Fano surfaces to be computed in terms of the associated polygonal quivers.

\vspace{3mm}
Fix an oriented lattice $N$ of rank $2$ and a Fano polygon $P \subset \NQ$. Since $P$ contains the origin in its strict interior, there is a toric surface $X$, constructed from the spanning fan $\spanf{P}$, and an ample line bundle on $X$, whose space of sections has a basis indexed by lattice points of the dual polygon $\dual{P} \subset \MQ$ (see~\cite{Ful93}). The Fano property (primitivity of vertices) implies that this ample line bundle is the anticanonical: $-K_{X}$. Therefore:
\[
    (-K_X)^{2} = \vol(\dual{P}),
\]
where $\vol$ is the normalized volume, taking value $1$ on an empty $2$-simplex in $\MQ$. Fix an isomorphism of oriented lattices between $N$ and $\Z^2$. Label the vertices of $\dual{P}$ cyclically: $v_1,\ldots,v_n$, in a way that agrees with the orientation. Then $\det(v_k,v_{k+1})$ is the normalized volume of the $2$-simplex $\conv{\orig,v_k,v_{k+1}}$ in $\MQ$, and it follows that
\begin{equation}\label{eqn:degree_determinant_form}
    (-K_X)^2 = \sum_{k=1}^{n}\det(v_k,v_{k+1}).
\end{equation}
The right side of~\eqref{eqn:degree_determinant_form} can be computed in terms of $Q:=\quiv(N,P)$, as follows: the cyclic numbering of the vertices of $\dual{P}$ is the same as a cyclic numbering $E_1,\ldots,E_n$ of the edges of $P$. For each $E_i$ we may choose an element $m_i$ of the (nonempty) multiset:
\begin{equation}\label{eqn:vertex_multiset_in_degree_derivation}
    \{m \in \verts{Q} \mid m \text{ is an inner normal vector of the cone over }E_i\}.
\end{equation}
The precise choice of $m_i$ is unimportant because in general~\eqref{eqn:vertex_multiset_in_degree_derivation} contains multiple copies of each inner normal vector, distinguished only by vertex labels. By Definition~\ref{def:quiv_definition}, each $m_i$ is labelled by a pair $(w_i,\ell_i) \in \Z^2_{\geq 1}$ and it is immediate from the definition of dual polygons that
\[
v_i = (1/\ell_i)m_i \text{ for }i=1,\ldots,n.
\]
Substituting these expressions for the $v_i$ into~\eqref{eqn:degree_determinant_form}, and recalling that $\det(m_k,m_{k+1})$ equals $\arr{m_k,m_{k+1}}$, we arrive at the following formula:
\begin{equation}\label{eqn:degree_quiver_form}
    (-K_X)^2 = \sum_{k = 1}^{n} \frac{\arr{m_k,m_{k+1}}}{\ell_k\ell_{k+1}}.
\end{equation}
In particular, the right side of~\eqref{eqn:degree_quiver_form} can be computed directly from the polygonal quiver $Q$.

\begin{remark}[Cyclic Subquivers]\label{rmk:cyclic_subquivers}Notice that the method used above to compute $(-K_X)^2$ can be re-interpreted as a recipe for constructing a special class of cyclic subquivers of a polygonal quiver. In the notation of Section~\ref{subsec:degree_derivation}, there is one cyclic subquiver $C$ of $Q$ for each choice of ordered list $m_1, \ldots, m_n$: we set $\verts{C} = \{m_1,\ldots,m_n\}$ and $\arr{m_1,m_j;C} := \arr{m_i,m_j;Q}$ if $j = (i+1)\mod{n}$  and $\arr{m_i,m_j;C}:=0$ otherwise. Every one of these subquivers has $|\verts{P}|$ vertices and determines $(-K_X)^2$ via the formula~\eqref{eqn:degree_quiver_form}. Moreover, since at least one such cyclic subquiver always exists, we see in particular that every polygonal quiver contains at least one oriented cycle.
\end{remark}

\begin{example}Let $N = \Z^{2}$ with the standard orientation. Consider the Fano polygon $P$ of $X_1 = \Proj(1,1,2)$, and its polygonal quiver $Q$, as shown in Figure~\ref{fig:P1xP1_P112_quivers} (bottom). To compute $(-K_{X_1})^2$ from $Q$, begin by labelling the edges of $P$ in a counter-clockwise manner (i.e.~in a manner consistent with the chosen orientation). This labelling determines an ordered list of $n = |\verts{P}| = 3$ vertices of $Q$: one inner normal is chosen for each edge. The only non-unique choice is for the edge $\conv{(2,-1),(0,1)}$, where we may choose either $m_1$ or $m_3$ (in the notation of Figure~\ref{fig:P1xP1_P112_quivers}, bottom). Our choice determines a cyclic subquiver of $Q$ with $|\verts{P}| = 3$ vertices. The two possibilities for this subquiver are shown in Figure~\ref{fig:cyclic_subquivers_P112}.

\begin{figure}[h!]
    \centering
    \begin{tikzpicture}[thick,scale=0.8, every node/.style={transform shape}]
      \node at (0.5,-1) (C1v1) {\small$(1,1)$};
      \node at (2.5,-1) (C1v2) {\small$(1,1)$};
      \node at (2.5,1) (C1v3) {\small$(1,1)$};
      \draw [->,thick] (C1v3) -- node[left] {$4$\hspace{2mm}{ }} (C1v1);
      \draw [->, thick] (C1v1) -- node[below] {$2$} (C1v2);
      \draw [->, thick] (C1v2) -- node[right] {$2$}(C1v3);

      \node at (-2.5,-1) (C2v1) {\small$(1,1)$};
      \node at (-2.5,1) (C2v2) {\small$(1,1)$};
      \node at (-0.5,1) (C2v3) {\small$(1,1)$};

      \draw [->,thick] (C2v1) -- node[left] {$2$} (C2v2);
      \draw [->, thick] (C2v2) -- node[above] {$2$} (C2v3);
      \draw [->, thick] (C2v3) -- node[right] {\hspace{1mm}{ }$4$} (C2v1);
   \end{tikzpicture}
\caption{Two distinguished subquivers for $\Proj(1,1,2)$.}\label{fig:cyclic_subquivers_P112}
\end{figure}
The anticanonical degree of $\Proj(1,1,2)$ can now be read off from either of these subquivers: in both cases we have $(-K_{X_1})^{2} = 2/(1\cdot1) + 4/(1\cdot1) + 2/(1\cdot1) = 8$, as expected.

For another example, consider $X_2 = \Proj(1,1,6)$ (cf.~Example~\ref{eg:polygonal_quivers_are_not_closed_under_mutation}). The associated polygonal quiver $Q$ has three vertices, and is shown in Figure~\ref{fig:polygonal_quivers_are_not_closed_under_mutation}. Since the Fano polygon of $\Proj(1,1,6)$ also has three vertices, there is only one cyclic subquiver and it equals $Q$. The formula~\eqref{eqn:degree_quiver_form} then tells us that $(-K_{X_{2}})^{2} = 8/(1\cdot1) + 4/(1\cdot3) + 4/(1\cdot3) = 32/3$, as expected.
\end{example}

\subsection{Quiver Degree Formula}\label{subsec:riemann_roch}
Let $P \subset \NQ$ be a Fano polygon, whose spanning fan defines the toric Fano surface $X_P$. We recall from~\cite{AK14}, that the \emph{singularity content} of $P$ is a pair $(\tau, \basket)$ where $\tau$ is a non-negative integer and $\basket$ is a multiset of $R$-singularities (or equivalently: of $R$-cones, in the sense of Section~\ref{subsec:background}). Singularity content is an invariant of combinatorial mutation, and it determines the anticanonical degree of $X_P$ via the following Noether formula:
\begin{equation}\label{eqn:degree_from_singularity_content}
    (-K_{X_P})^2 = 12 - \tau + \sum{A(\sigma)},
\end{equation}
where the sum is taken over all $\sigma \in \basket$. The rational numbers $A(\sigma)$ are defined in~\cite{AK14}, and can be computed explicitly for any $R$-cone. Now if $Q = \quiv(N,P)$, then we may use the construction of Section~\ref{subsec:degree_derivation} to obtain a cyclic subquiver $C$ of $Q$, which also determines $(-K_{X_P})^2$. Combining the formulae~\eqref{eqn:degree_quiver_form} and \eqref{eqn:degree_from_singularity_content}, we obtain the \emph{quiver degree formula}, relating the combinatorics of $Q$ to the singularities of $X_P$:
\begin{equation}\label{eqn:quiver_degree_formula}
    \sum_{k = 1}^{n} \frac{\arr{m_k,m_{k+1}}}{\ell_k\ell_{k+1}} = 12 - \tau + \sum_{\sigma \in \basket}{A(\sigma)}.
\end{equation}
Here, $m_1, \ldots, m_n$ are the (cyclically ordered) vertices of $C$, $m_i$ has label $(m_i, \ell_i)$, $i = 1,\ldots,n$ and $\arr{m_k,m_{k+1}} = \arr{m_k,m_{k+1};Q}$. As discussed in Section~\ref{subsec:degree_derivation}, the left hand side of~\eqref{eqn:quiver_degree_formula} is independent of the choice of cyclic subquiver $C$.

\section{Block Quivers of Decorated Quivers}\label{sec:block_quivers}
We have seen in Section~\ref{sec:mutations} that polygonal quivers behave well if one is primarily interested in combinatorial mutations of the underlying Fano polygons. However, since they are constructed from standard refinements of spanning fans, polygonal quivers often contain repeated information, which is inconvenient from the viewpoint of calculations. In this situation, it is often useful to pass to the \emph{block quiver}. In particular, the block construction removes the need for choices in Section~\ref{subsec:degree_derivation}, so that every block quiver of a Fano polygon has a distinguished \emph{Hamiltonian subquiver}.

\begin{definition}\label{def:block_quiver_of_decorated_quiver}The block quiver of a decorated quiver $Q$ is denoted $Q_b$. It has vertex set $\verts{Q}/{\sim}$ where $m \sim m'$ if and only if the following conditions hold:
\begin{enumerate}[label=(\alph*),noitemsep]
  \item\label{misc:01}
  The labels $(w,\ell),(w',\ell')$ of $m,m'$ satisfy $\ell = \ell'$.
  \item\label{misc:02}
  $\into{m} = \into{m'}$ and $\arr{x,m} = \arr{x,m'}$ for all $x \in \into{m}$.
  \item\label{misc:03}
  $\outof{m} = \outof{m'}$ and $\arr{x,m} = \arr{x,m'}$ for all $x \in \outof{m}$.
\end{enumerate}
A vertex $v$ of $Q_b$ is thus an equivalence class of vertices of $Q$: $v = \{m_1,\ldots,m_k\}$. The label of $v$ is defined to be $(w_1+\ldots+w_k, \ell)$, where $(w_i,\ell)$ is the label of $m_i$ for $i = 1,\ldots,k$. Given two vertices $v_1,v_2$ of $Q_b$, we define $\arr{v_1,v_2;Q_b}$ to be $\arr{m_1,m_2;Q}$, where $m_1,m_2$ are representatives of $v_1,v_2$ respectively.
\end{definition}

A \emph{block quiver} is any decorated quiver $Q$ satisfying $Q = Q_b$. Note that the quantity $\arr{v_1,v_2;Q_b}$ in Definition~\ref{def:block_quiver_of_decorated_quiver} is independent of the choice of representatives: suppose $m_1 \sim m_1'$ and $m_2 \sim m_2'$. First note that $\arr{m_1',m_2';Q} = \arr{m_1,m_2';Q}$. This follows from the conditions~\ref{misc:02} and~\ref{misc:03} which state that $m_2' \in \into{m_1}\cup\outof{m_1}$ if and only if $m_2' \in \into{m_1'}\cup\outof{m_1'}$, and in this case $\arr{m_1,m_2';Q} = \arr{m_1',m_2';Q}$. Otherwise, we have $\arr{m_1,m_2';Q} = 0 = \arr{m_1',m_2';Q}$. A similar argument also shows that $\arr{m_1,m_2;Q} = \arr{m_1,m_2';Q}$.

\begin{lemma}\label{lem:block_quivers_also_balanced}If $Q$ is a balanced quiver then its block quiver $Q_b$ is also balanced.
\end{lemma}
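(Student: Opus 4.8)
The plan is to verify the balancing condition at an arbitrary vertex $v$ of $Q_b$ by unwinding the definitions and reducing to the balancing of $Q$ at a single representative $m$ of $v$. Write $v = \{m_1,\ldots,m_k\}$, with each $m_i$ labelled $(w_i,\ell)$, so that the label of $v$ is $(W,\ell)$ with $W = w_1 + \cdots + w_k$. Fix the representative $m = m_1$. The key structural observation is that, because $m_1 \sim \cdots \sim m_k$, conditions~\ref{misc:02} and~\ref{misc:03} of Definition~\ref{def:block_quiver_of_decorated_quiver} say that all the $m_i$ have \emph{identical} incoming and outgoing neighbourhoods in $Q$, with identical arrow multiplicities; consequently $\outof{m;Q}$ and $\into{m;Q}$ are each unions of $\sim$-classes, and they descend to well-defined subsets $\outof{v;Q_b}$ and $\into{v;Q_b}$ of $\verts{Q_b}$. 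Moreover, for $x\in\into{m;Q}$ the value $\arr{x,m;Q}$ depends only on the class of $x$ (by the argument already given in the excerpt showing $\arr{v_1,v_2;Q_b}$ is representative-independent), and similarly for $x\in\outof{m;Q}$.

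First I would compute the right-hand side of the balancing equation~\eqref{eqn:balancing_condition_abelian} (in the integer-labelled form, i.e.\ the second equality in~\eqref{eqn:balancing_condition}) for $Q_b$ at $v$. Group the set $\into{m;Q}$ into its $\sim$-classes $t^{(1)},\ldots,t^{(p)}$, say with $t^{(j)} = \{t^{(j)}_1,\ldots,t^{(j)}_{r_j}\}$ and common local index part, and note that the label of the vertex $t^{(j)}$ of $Q_b$ is $(\sum_a w_{t^{(j)}_a},\,\cdot\,)$. Then
\[
  \sum_{t \in \into{v;Q_b}} w_t \cdot \arr{t,v;Q_b}
  = \sum_{j=1}^{p} \Bigl(\sum_{a=1}^{r_j} w_{t^{(j)}_a}\Bigr)\arr{t^{(j)},v;Q_b}
  = \sum_{j=1}^{p} \sum_{a=1}^{r_j} w_{t^{(j)}_a}\,\arr{t^{(j)}_a, m; Q}
  = \sum_{t \in \into{m;Q}} w_t\cdot \arr{t,m;Q},
\]
where the middle equality uses $\arr{t^{(j)},v;Q_b} = \arr{t^{(j)}_a,m_1;Q}$ for every $a$ (representative-independence), and the last equality just re-expands the grouping. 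An identical computation gives $\sum_{h\in\outof{v;Q_b}} w_h\,\arr{v,h;Q_b} = \sum_{h\in\outof{m;Q}} w_h\,\arr{m,h;Q}$. Since $Q$ is balanced at $m$, the two right-hand sides agree, hence so do the two $Q_b$-sums; this is exactly balancing of $Q_b$ at $v$. As $v$ was arbitrary, $Q_b$ is balanced. One should also note in passing that this computation shows $D(v;Q_b) = D(m;Q)$, i.e.\ the diameter is constant on $\sim$-classes, so the first equality of~\eqref{eqn:balancing_condition} is recovered as well.

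The only genuine obstacle is bookkeeping rather than mathematics: one must be careful that $\outof{m;Q}$ and $\into{m;Q}$ really are saturated for $\sim$ (so that $\outof{v;Q_b}$, $\into{v;Q_b}$ make sense and the sums reorganize cleanly) — this is where I would spend a sentence or two, invoking conditions~\ref{misc:02}--\ref{misc:03}. One subtlety worth flagging: different elements $m_i$ of the class $v$ all have the same $\ell$ by~\ref{misc:01}, but potentially different $w_i$; this is harmless because balancing in the integer-labelled sense~\eqref{eqn:balancing_condition} only involves the $w$-components, and the $w$-component of $v$'s label is by definition the sum of the $w_i$, which is precisely what makes the telescoping above work. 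No convergence issues arise since $Q$, being balanced, is assumed to have finitely many vertices and arrows, so $Q_b$ does too.
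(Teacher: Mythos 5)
Your proof is correct; the paper states Lemma~\ref{lem:block_quivers_also_balanced} without proof, and your argument is the evident one it intends. The two bookkeeping points you flag do hold: conditions~\ref{misc:02} and~\ref{misc:03} force $\into{x;Q}=\into{x';Q}$ and $\outof{x;Q}=\outof{x';Q}$ whenever $x\sim x'$, which makes $\into{m;Q}$ and $\outof{m;Q}$ saturated for $\sim$ with arrow multiplicities constant on classes, so the regrouping of the two balancing sums (and the resulting identity $D(v;Q_b)=D(m;Q)$) goes through exactly as you describe.
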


\begin{example}\label{eg:block_quiver_P1XP1_P112}The quiver shown in Figure~\ref{fig:P1xP1_P112_quivers} (top) is a block quiver. The quiver shown in Figure~\ref{fig:P1xP1_P112_quivers} (bottom) has balanced block quiver shown on the right of Figure~\ref{fig:passage_to_block_quiver}.
\begin{figure}[h!]
    \centering
    \begin{tikzpicture}[thick,scale=0.8, every node/.style={transform shape}]
      \node at (-4,1) (quivP112m1) {\small$(1,1)$};
      \node at (-4,-1) (quivP112m2) {\small$(1,1)$};
      \node at (-2,-1) (quivP112m3) {\small$(1,1)$};
      \node at (-2,1) (quivP112m4) {\small$(1,1)$};
      \draw [->, thick] (quivP112m1) -- node[above] {$2$} (quivP112m4);
      \draw [->, thick] (quivP112m2) -- node[left] {$2$} (quivP112m1);
      \draw [->, thick] (quivP112m2) -- node[below] {$2$} (quivP112m3);
      \draw [->, thick] (quivP112m3) -- node[right] {$2$}(quivP112m4);
      \draw [->, thick] (quivP112m4) -- node[below] {$4$}(quivP112m2);

      \coordinate (arrowt) at (-0.5,0);
      \coordinate (arrowh) at (1,0);
      \draw [->,thick] (arrowt) -- node[above] {\textbf{block}} (arrowh);

      \node at (2,-1) (Bv1) {\small$(1,1)$};
      \node at (4,-1) (Bv2) {\small$(2,1)$};
      \node at (4,1) (Bv3) {\small$(1,1)$};
      \draw [->,thick] (Bv3) -- node[left] {$4$\hspace{2mm}{ }} (Bv1);
      \draw [->, thick] (Bv1) -- node[below] {$2$} (Bv2);
      \draw [->, thick] (Bv2) -- node[right] {$2$}(Bv3);
   \end{tikzpicture}
   \caption{Passage to the Block Quiver.\label{fig:passage_to_block_quiver}}
\end{figure}
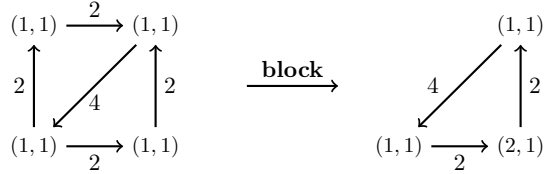
Informally speaking, the block quiver has been constructed from the original by glueing the top-left and bottom-right vertices, which have the same local structure in terms of incident arrows, and adding their weight labels. We emphasize that this block quiver is different from the cyclic subquivers shown in Figure~\ref{fig:cyclic_subquivers_P112}. In general the two constructions are different, and a block quiver need not be cyclic.
\end{example}

The block quiver of a polygonal quiver can be computed directly from its Fano polygon.

\begin{definition}\label{def:bquiv_definition}Let $(N,P) \in \mathfrak{F}$. The vertex set of the quiver $\bquiv(N,P)$ is the set of inner normal vectors of maximal cones in the spanning fan~$\spanf{P}$:
\begin{equation}\label{eqn:vertices_of_bquiv}
    \verts{\bquiv(N,P)} := \{m_{\sigma} \in M \mid \sigma \text{ is a maximal cone in } \spanf{P}\}.
\end{equation}
The number of arrows between $m_{\sigma}$ and $m_{\tau}$ is $\det(m_{\sigma},m_{\tau})$, the coefficient of $m_{\sigma}\wedge m_{\tau}$, pointing from $m_{\sigma}$ to $m_{\tau}$ if the determinant is positive and from $m_{\tau}$ to $m_{\sigma}$ otherwise. Each $m_{\sigma}$ is decorated by $(w_{\sigma},\ell_{\sigma})$.
\end{definition}

To show that this construction coincides with the one given in Definition~\ref{def:block_quiver_of_decorated_quiver} for polygonal quivers, consider vertices $m_1,m_2$ of $\quiv(N,P)$. Then $m_1, m_2$ are, by definition, inner normal vectors of maximal cones in a standard refinement of $\spanf{P}$. In particular, $m_1$ and $m_2$ are inner normal vectors of (uniquely determined) maximal cones $\sigma_1$ and $\sigma_2$ in $\spanf{P}$.

\begin{lemma}\label{lem:block_quiver_definitions_coincide_for_polygonal_quivers}In the above notation: $m_1 \sim m_2$ if and only if $\sigma_1 = \sigma_2$. Thus
\[
    \quiv(N,P)_b \cong \bquiv(N,P),
\]
that is, the construction of Definition~\ref{def:bquiv_definition} produces the block quiver of $\quiv(N,P)$.
\end{lemma}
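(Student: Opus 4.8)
The plan is to prove the stated equivalence $m_1 \sim m_2 \iff \sigma_1 = \sigma_2$ directly from the defining conditions \ref{misc:01}--\ref{misc:03}, and then deduce the claimed isomorphism $\quiv(N,P)_b \cong \bquiv(N,P)$ by comparing vertex sets, arrows, and labels via Definitions~\ref{def:block_quiver_of_decorated_quiver} and~\ref{def:bquiv_definition}.

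\textbf{Direction $\sigma_1 = \sigma_2 \implies m_1 \sim m_2$.} Suppose $m_1$ and $m_2$ are both inner normal vectors of the same maximal cone $\sigma \in \spanf{P}$, arising as subcones of $\sigma$ in a standard refinement $\sref$. By the discussion following Definition~\ref{def:quiv_definition}, they have the same inner normal vector $m_\sigma \in M$, hence are \emph{equal} as elements of $M$ (they are distinguished only by vertex labels). Condition~\ref{misc:01} then holds because every maximal subcone in a standard refinement of $\sigma$ has local index $\ell_\sigma$ (by the discussion in Section~\ref{subsec:background}), so $\ell_1 = \ell_\sigma = \ell_2$. Conditions~\ref{misc:02} and~\ref{misc:03} hold because, for any third vertex $x$, the arrow count $\arr{x,m_i} = \det(x,m_i)$ depends only on the underlying vector $m_i \in M$, which is the same for $m_1$ and $m_2$; hence $\into{m_1} = \into{m_2}$, $\outof{m_1} = \outof{m_2}$, and the corresponding arrow multiplicities agree.

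\textbf{Direction $m_1 \sim m_2 \implies \sigma_1 = \sigma_2$.} Here I would argue by contraposition: assume $\sigma_1 \neq \sigma_2$ and show that $m_1 \not\sim m_2$, i.e.\ one of \ref{misc:01}--\ref{misc:03} fails. The cleanest route is to use the geometric meaning of $\into{m_1}$ and $\outof{m_1}$: as vertices of $\quiv(N,P)$, these are the inner normals of the cones over the edges of $P$ adjacent, on either side, to the cone $\cone{E}$ whose inner normal is $m_1$ (cf.\ the proof of Proposition~\ref{prop:balancing_condition}). Distinct maximal cones $\sigma_1, \sigma_2 \in \spanf{P}$ correspond to distinct edges $E_1, E_2$ of $P$ with distinct primitive inner normals $m_{\sigma_1} \neq m_{\sigma_2}$ (since a lattice polygon has at most one edge with a given primitive inner normal direction pointing inward). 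If the two edges are not parallel, then $m_{\sigma_1}$ and $m_{\sigma_2}$ are linearly independent, so there are nonzero arrows between $m_1$ and $m_2$ in $\quiv(N,P)$; but $m_2 \notin \into{m_1} \cup \outof{m_1}$ would be needed for $m_1 \sim m_2$ to even make sense together with \ref{misc:02}--\ref{misc:03} forcing $m_2$ into $m_1$'s neighbourhood symmetrically — more precisely, $m_1 \in \outof{m_2}$ or $\into{m_2}$ while $m_1 \notin \outof{m_1} \cup \into{m_1}$ (no self-loops), contradicting $\outof{m_1} = \outof{m_2}$ and $\into{m_1} = \into{m_2}$. If the two edges \emph{are} parallel but distinct (they lie on opposite sides of the origin), then $m_{\sigma_1} = -\lambda m_{\sigma_2}$ for some $\lambda > 0$; one then checks using the cyclic structure of the edges of $P$ that $\into{m_1}$ and $\into{m_2}$ cannot coincide — the "incoming" neighbours of $m_1$ sit on one side of $P$ and those of $m_2$ on the other. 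This combinatorial case analysis, ruling out $m_1 \sim m_2$ whenever $\sigma_1 \neq \sigma_2$, is the main obstacle in the proof; it requires care about the cyclic ordering of edges and the sign conventions for arrows.

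\textbf{Deducing the isomorphism.} Granted the equivalence, vertices of $\quiv(N,P)_b$ are the $\sim$-classes, which by the equivalence are in bijection with the maximal cones $\sigma \in \spanf{P}$, i.e.\ with $\verts{\bquiv(N,P)}$ by~\eqref{eqn:vertices_of_bquiv}. Under this bijection, the arrow count $\arr{v_1,v_2;\quiv(N,P)_b} = \arr{m_1,m_2;\quiv(N,P)} = \det(m_{\sigma_1},m_{\sigma_2})$ matches the count in Definition~\ref{def:bquiv_definition}, and the direction conventions agree since both use the sign of the same determinant. Finally, the label of the class $v = \{m_1,\ldots,m_k\}$ in $\quiv(N,P)_b$ is $(w_1 + \cdots + w_k, \ell)$; but $w_1, \ldots, w_k$ are precisely the widths of the primitive $T$- and $R$-subcones in a standard refinement of $\sigma$, whose sum equals $w_\sigma$ by~\eqref{eqn:width_divided_by_local_index} (namely $\alpha \ell_\sigma + \rho = w_\sigma$), while $\ell = \ell_\sigma$. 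This is exactly the label $(w_\sigma, \ell_\sigma)$ attached to the corresponding vertex of $\bquiv(N,P)$ in Definition~\ref{def:bquiv_definition}, completing the identification.
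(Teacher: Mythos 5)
Your proposal is correct and follows essentially the same route as the paper: equal cones give equal normal vectors (hence equivalent vertices), and for distinct cones one splits into the non-parallel case (where an arrow between $m_1$ and $m_2$ together with the absence of self-loops breaks $\into{m_1}=\into{m_2}$ or $\outof{m_1}=\outof{m_2}$) and the parallel case, finishing with the label computation $\sum w_i = \alpha\ell_\sigma + \rho = w_\sigma$. The only spot you leave as "one then checks" — the parallel case — closes in one line exactly as the paper does it: primitivity forces $m_1 = -m_2$, so $\det(x,m_1) = -\det(x,m_2)$ gives $\into{m_1} = \outof{m_2}$, which is disjoint from $\into{m_2}$ and nonempty since there are no $2$-cycles.
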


\begin{proof}If $\sigma_1 = \sigma_2$ then $m_1 = m_2$, and hence $m_1 \sim m_2$. Conversely, if $\sigma_1 \neq \sigma_2$ then there are two possibilities: first assume $m_1 \in \into{m_2} \cup \outof{m_2}$. If $m_1 \in \into{m_2}$ then $\into{m_1} \neq \into{m_2}$, because polygonal quivers have no self-loops (Example~\ref{eg:no_self_loops_or_2-cycles}). Thus, $m_1 \not\sim m_2$. A similar argument shows $m_1 \not\sim m_2$ when $m_1 \in \outof{m_2}$. Next assume $m_1 \not\in \into{m_2} \cup \outof{m_2}$. Then there are no arrows between $m_1$ and $m_2$ i.e.~$\det(m_1,m_2) = 0$. Since $m_1$ and $m_2$ are both primitive, we must have $m_1 = \pm m_2$. But $m_1 \neq m_2$, by the assumption $\sigma_1 \neq \sigma_2$. So $m_1 = -m_2$, which implies $\into{m_1} = \outof{m_2}$. But $\outof{m_2} \neq \into{m_2}$, since polygonal quivers contain no $2$-cycles (Example~\ref{eg:no_self_loops_or_2-cycles}). Therefore $m_1 \not\sim m_2$ in this case as well.

Thus, if $Q := \quiv(N,P)$ then there is a one-to-one correspondence between vertices $v$ of $Q_b$ and maximal cones of $\sigma$ in $\Sigma_P$, as follows:
\begin{equation}\label{eqn:characterization_of_vertices_of_block_quiver}
    v = \{q \in \verts{Q} \mid q\text{ is an inner normal vector to }\sigma\}.
\end{equation}
Since every $\sigma \in \Sigma_P$ has a unique inner normal vector $m$, which is a vertex of $\bquiv(N,P)$, the assignment $\varphi: v \mapsto m$ is a one-to-one correspondence between vertices of $Q_b$ and those of $\bquiv(N,P)$. Furthermore, since every $q \in v$ equals $m$ as an inner normal vector, we have by definition that $\arr{v_1,v_2;Q_b} = \arr{\varphi(v_1),\varphi(v_2);\bquiv(N,P)}$. Therefore, $\varphi$ is arrow-preserving. Finally, if $\varphi(v) = m$, then the labels $(w_v,\ell_v),(w_m,\ell_m)$ of $v,m$ coincide: by definition $w_m$ and $\ell_m$ are the width and local index of a maximal cone $\sigma_m$ in $\Sigma_P$. On the other hand,~\eqref{eqn:characterization_of_vertices_of_block_quiver} characterizes the elements of $v$ as all inner normal vectors to maximal subcones appearing in a standard refinement of $\sigma_m$. All these subcones have local index $\ell_{\sigma}$, so that $\ell_v = \ell_\sigma$ and the sum of their widths is the width of $\sigma$ i.e.~$w_v = w_m$. We conclude that the map $\varphi$ is an isomorphism of decorated quivers.
\end{proof}

\begin{remark}\label{rmk:in_out_covers_all_but_one}
If $Q_b:=\bquiv(N,P)$ is the block quiver of a Fano polygon $P$, and if $v \in \verts{Q_b}$, then the set $\into{v}\cup\outof{v}$ contains all but at most one vertex of $Q_b$ other than $v$. This is immediate from Definition~\ref{def:bquiv_definition} and the observation that, given any edge of $P$, there is at most one other edge that is parallel to it. It follows in particular that a cyclic quiver with at least $5$ vertices can not be the block (or polygonal) quiver of a Fano polygon.
\end{remark}

\subsection{Mutations and the Block Construction}\label{subsec:mutation_group}
For polygonal quivers, passing to the block quiver does not commute with mutation, as the following example shows.

\begin{example}\label{eg:mutation_and_block_do_not_commute}Consider the polygonal quiver $Q$ for $\Proj(1,1,2)$ and its block quiver $Q_b$ as shown in Example~\ref{eg:block_quiver_P1XP1_P112}. Mutating $Q$ at the top-left vertex $m$ and then passing to the block quiver recovers the quiver for $\Proj^1\times\Proj^1$ shown in Figure~\ref{fig:P1xP1_P112_quivers} (top). On the other hand, if we first pass from $Q$ to $Q_b$ then the equivalence class of $m$ is the vertex $v$ of $Q_b$ with label $(2,1)$. Mutating $Q_b$ at $v$ does not recover the quiver of $\Proj^1 \times \Proj^1$.
\end{example}

This discrepancy can be resolved by extending the definition of quiver mutations, as follows:

\begin{definition}\label{def:higher_mutations_of_balanced_quivers}Let $Q$ be a balanced quiver. Choose a vertex $m$ of $Q$ with label $(w,\ell) \in \Z^2$. For any integer $k$, the underlying quiver of $\mut_{m}^{k}(Q)$ is obtained by transforming the underlying quiver of $Q$ in the following manner:
\begin{enumerate}[label=(\alph*),noitemsep]
\item\label{misc:04}
Reverse all arrows incident at $m$.
\item\label{misc:05}
For every subquiver $t \to m \to h$ of $Q$, introduce $k$ arrows $t \to h$.
\item
Following~\ref{misc:04} and~\ref{misc:05}, cancel all $2$-cycles.
\end{enumerate}
A vertex $v \neq m$ of $\mut_{m}^{k}(Q)$ is decorated with the same label as in $Q$ and the vertex $m$ in $\mut_{m}^{k}(Q)$ is decorated with the label $(kD(m) - w, D(m) - \ell)$.
\end{definition}
Notice that setting $k = 1$ recovers Definition~\ref{def:mutation_of_balanced_quivers}: $\mut_{m}^{1}(Q) = \mut_{m}(Q)$. Next, consider the set of pairs $(Q,m)$, with $Q$ a balanced quiver and $m$ a vertex of $Q$. For any integer $k$, define the function $\mut^{k}: (Q,m) \mapsto (\mut_{m}^{k}(Q),m)$.

\begin{prop}\label{prop:mutation_group}If $Q$ is a balanced quiver then $\mut_{m}^{k}(Q)$ is balanced for any $m \in \verts{Q}$ and any integer $k$. Furthermore, $\mut^{0}\circ\mut^{0} = \mathrm{id}$ and
\begin{equation}\label{eqn:group_identity}
  \mut^{t}\circ\mut^{s} = \mut^{0}\circ\mut^{s-t}.
\end{equation}
In particular, the functions $\mut^{k}$ $(k \in \Z)$ are self-inverse. Thus, finite compositions of these functions form a group, which is generated by $\{\mut^{0},\mut^{1}\}$.
\end{prop}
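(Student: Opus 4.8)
The plan is to dispose of the three core assertions — that $\mut_m^k$ preserves balancing, that $\mut^0\circ\mut^0=\mathrm{id}$, and that~\eqref{eqn:group_identity} holds — in that order, and then to read off the group statement formally. First I would show $\mut_m^k(Q)$ is balanced, adapting the proof of Proposition~\ref{prop:balanced_quivers_are_closed_under_mutation} almost verbatim. Balancing at $m$ is immediate: steps~\ref{misc:04}--\ref{misc:05} of Definition~\ref{def:higher_mutations_of_balanced_quivers} merely reverse the arrows incident at $m$ (keeping multiplicities) and leave every other label fixed, so $\outof{m;\mut_m^k(Q)}=\into{m;Q}$ and $\into{m;\mut_m^k(Q)}=\outof{m;Q}$, and~\eqref{eqn:balancing_condition} at $m$ in $\mut_m^k(Q)$ is the equation for $Q$ with its two sides swapped; in particular $D_{\mut_m^k(Q)}(m)=\sum_{t\in\into{m;Q}}w_t\,\arr{t,m;Q}=D_Q(m)=:D$, so the diameter of $m$ is an invariant of $\mut_m$. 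For $v_0\neq m$ not adjacent to $m$ there is nothing to check, and for $v_0\in\into{m;Q}$ (the case $v_0\in\outof{m;Q}$ being identical) I would enumerate the arrows out of $v_0$ in $\mut_m^k(Q)$ exactly as in Proposition~\ref{prop:balanced_quivers_are_closed_under_mutation}, the only changes being the factor $k$ on each arrow created by step~\ref{misc:05} and the first coordinate $kD-w$ of the new label of $m$. Substituting into~\eqref{eqn:sum_formula_for_balancing_condition} and collecting terms,
\[
  S(v_0;\mut_m^k(Q)) = S(v_0;Q) + k\,\arr{v_0,m;Q}\Big(\sum_{h\in\outof{m;Q}}w_h\,\arr{m,h;Q}-D\Big),
\]
and both summands vanish: the first because $Q$ is balanced at $v_0$, the second by the definition of $D$.

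Next I would record how $\mut_m^k$ acts on arrow numbers: $\arr{m,y;\mut_m^k(Q)}=-\arr{m,y;Q}$, and for $x,y\neq m$
\[
  \arr{x,y;\mut_m^k(Q)} = \arr{x,y;Q}+k\,\phi_Q(x,y),\qquad \phi_Q(x,y):=\tfrac12\big(\arr{x,m;Q}\,|\arr{m,y;Q}|+|\arr{x,m;Q}|\,\arr{m,y;Q}\big),
\]
the point being that $k\,\phi_Q(x,y)$ is the signed number of arrows inserted by step~\ref{misc:05}: since a quiver with no $2$-cycle admits at most one of the directed paths $x\to m\to y$, $y\to m\to x$, the final $2$-cycle cancellation is faithfully captured by this signed substitution. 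On vertex labels, $\mut_m^k$ fixes everything except the label $(w,\ell)$ of $m$, which it sends to $(kD-w,D-\ell)$. Two observations then power the rest. With $k=0$ nothing is inserted and no $2$-cycle is created, so $\mut_m^0$ is arrow-reversal at $m$ on the underlying quiver and $(w,\ell)\mapsto(-w,D-\ell)$ on labels, an involution on both since $D$ is $\mut_m$-invariant; hence $\mut^0\circ\mut^0=\mathrm{id}$. And since $\arr{x,m;\mut_m^s(Q)}=-\arr{x,m;Q}$ and $\arr{m,y;\mut_m^s(Q)}=-\arr{m,y;Q}$, one gets the crucial sign flip $\phi_{\mut_m^s(Q)}=-\phi_Q$.

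Granting these,~\eqref{eqn:group_identity} is a direct computation. Setting $Q^{(s)}:=\mut_m^s(Q)$, for $x,y\neq m$ one has $\arr{x,y;\mut_m^t(Q^{(s)})}=\arr{x,y;Q^{(s)}}+t\,\phi_{Q^{(s)}}(x,y)=\arr{x,y;Q}+(s-t)\,\phi_Q(x,y)$, which is exactly $\arr{x,y;\mut_m^0(\mut_m^{s-t}(Q))}$, while $\arr{m,y;\mut_m^t(Q^{(s)})}=\arr{m,y;Q}=\arr{m,y;\mut_m^0(\mut_m^{s-t}(Q))}$; and on labels only that of $m$ moves, passing through $(sD-w,D-\ell)$ to $((t-s)D+w,\ell)$ on the left and through $((s-t)D-w,D-\ell)$ to $((t-s)D+w,\ell)$ on the right. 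Since every $\mut^k$ keeps the marked vertex fixed, this gives $\mut^t\circ\mut^s=\mut^0\circ\mut^{s-t}$. The remaining claims follow formally: $s=t=k$ yields $\mut^k\circ\mut^k=\mut^0\circ\mut^0=\mathrm{id}$, so each $\mut^k$ is self-inverse; $(t,s)=(1,k)$ gives $\mut^k=\mut^1\circ\mut^0\circ\mut^{k-1}$, so by induction $\mut^k\in\langle\mut^0,\mut^1\rangle$ for $k\geq 0$; and $(t,s)=(-k,0)$ gives $\mut^k=\mut^0\circ\mut^{-k}\circ\mut^0$, covering $k<0$. Thus the finite compositions of the $\mut^k$ are closed under composition, contain $\mathrm{id}=\mut^0\circ\mut^0$ and the inverse of each word (its reversal, since each letter is self-inverse), and lie in $\langle\mut^0,\mut^1\rangle$ — the asserted group.

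I expect the only genuine obstacle to be the second step: verifying that the combinatorial recipe of Definition~\ref{def:higher_mutations_of_balanced_quivers}, including the $2$-cycle cancellation, is precisely the signed substitution $\arr{x,y}\mapsto\arr{x,y}+k\,\phi_Q(x,y)$, and hence that $\phi_Q$ changes sign under reversal of the arrows at $m$. Once this is pinned down, the remaining steps are routine bookkeeping.
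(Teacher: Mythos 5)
Your proposal is correct and follows essentially the same route as the paper, whose proof is only a two-line sketch ("repeat the proof of Proposition~\ref{prop:balanced_quivers_are_closed_under_mutation} with minor changes" plus "a direct calculation using Definition~\ref{def:higher_mutations_of_balanced_quivers} and the invariance of $D(m)$"); your signed-substitution formula with the sign flip $\phi_{\mut_m^s(Q)}=-\phi_Q$ and the label bookkeeping are exactly the details that calculation entails.
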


\begin{proof}The statement about balancing follows by repeating the proof of Proposition~\ref{prop:balancing_condition} with minor changes, while the identity~\eqref{eqn:group_identity} follows from a direct calculation using Definition~\ref{def:higher_mutations_of_balanced_quivers} and the observation that $D(m)$ is the same in both $Q$ and $\mut_{m}^{k}(Q)$ for all integers $k$.
\end{proof}

Notice that our group of quiver mutations is not abelian: $\mut^{0}\circ\mut^{1} \neq \mut^{1}\circ\mut^{0}$.

The functions $\mut^{k}$ allow us to mutate block quivers of polygonal quivers in a way that is compatible with mutation: let $Q = \quiv(N,P)$ be a polygonal quiver and let $v$ be a vertex of the block quiver $Q_b$ with label $(w,\ell)$. By Definition~\ref{def:block_quiver_of_decorated_quiver}, $v$ is an equivalence class of vertices of $Q$: if $w = \tau\ell + \rho$, $0 \leq \rho < \ell$, then $v$ contains at most one $R$-vertex $m_0$ and finitely many (possibly zero) primitive $T$-vertices $m_1,\ldots,m_\tau$. The $m_i$ all represent the same inner normal vector $m$ of a maximal cone in $\Sigma_P$. In particular:
\begin{equation}\label{eqn:zero_arrows}
    \arr{m_i,m_j} = 0 \text{ for all }i,j \in \{0,1,\ldots,\tau\}.
\end{equation}
Fix an integer $k$ satisfying $1 \leq k \leq \tau$. One can form a quiver $Q^{k}$ by successively mutating $Q$ (in the sense of Definition~\ref{def:mutation_of_balanced_quivers}) at $k$ of the primitive $T$-vertices: $m_{i_1},\ldots,m_{i_k} \in v - \{m_0\}$. The precise choice of vertices at which to mutate, as well as the order of mutation, is unimportant. The first claim is immediate from Definition~\ref{def:block_quiver_of_decorated_quiver}, while the second follows immediately from~\eqref{eqn:zero_arrows}. After these $k$ mutations, we may pass to the block quiver $Q^{k}_{b}$.

On the other hand, starting from $Q_b$ one may construct an intermediate quiver $Q_{v,k}$. This quiver is identical to $Q_b$ except that the vertex $v$ has been replaced by two vertices: $v_1$ with label $(k\ell,\ell)$ and $v_2$ with label $((\tau-k)\ell + \rho ,\ell)$. All arrows incident at $v$ are removed, $\arr{v_1,v_2;Q_{v,k}}:=0$ and $\arr{x,v_i ; Q_{v,k}} := \arr{x,v;Q_b}$ for $i = 1,2$ and all $x \in \verts{Q_b} - \{v\}$. Passing to the intermediate quiver is the analogue of writing the equivalence class $v$ as $v_1 \cup v_2$ with $v_1 = \{m_{i_1},\ldots,m_{i_k}\}$ and $v_2 = v - v_1$. A direct check now shows that $(\mut_{v_1}^{k}(Q_{v,k}))_{b} = Q^{k}_b$.

\subsection{Hamiltonian Subquivers}\label{subsec:hamiltonian_subquivers}
Given a Fano polygon $P \subset \NQ$, Section~\ref{subsec:degree_derivation} describes a method for constructing a distinguished class of cyclic subquivers of $Q:=\quiv(N,P)$, as explained in Remark~\ref{rmk:cyclic_subquivers}. A similar construction exists for the block quiver $Q_b = \bquiv(N,P)$: label the edges of $P$ cyclically as $E_1,\ldots,E_n$, and let $m_i$ denote the inner normal vector of the cone over $E_i$, so that $\verts{Q_b} = \{m_1,\ldots,m_n\}$.

\begin{definition}\label{def:hamiltonian_subquiver_of_polygonal_block}The \emph{Hamiltonian subquiver} $H$ of $Q_b = \bquiv(N,P)$ has vertex set equal to $\verts{Q_b}$, with $\arr{m_i,m_j;H}:=\arr{m_i,m_j;Q_b}$ if $j \equiv (i+1)\bmod{n}$ and $\arr{m_i,m_j;H}:=0$ otherwise. Every vertex of $H$ carries the same label as in $Q_b$.
\end{definition}

Thus $H$ is a (not necessarily balanced) cyclic subquiver of $Q_b$, with $\verts{H} = \verts{Q_b}$, such that following the vertices of $H$ in the direction of arrows is equivalent to picking out the edges of $P$ in a cyclically ordered sequence. Notice that each $m_i\in\verts{H}$ represents the same inner normal vector as the $m_i\in\verts{C}$ constructed in Section~\ref{subsec:degree_derivation}. It follows that $m_i$ is decorated with the same local index $\ell_i$ in both $H$ and $C$, and $\arr{m_i,m_j;H} = \arr{m_i,m_j;C}$. Thus, repeating the argument of Section~\ref{subsec:degree_derivation} for $Q_b$ with $H$ replacing the cyclic subquiver $C$, we find that $Q = \quiv(N,P)$ and $Q_b = \bquiv(N,P)$ satisfy the (same) quiver degree formula. This is expected, because the underlying polygon $P$ is the same for both $Q$ and $Q_b$.

\begin{lemma}The quiver degree formula~\eqref{eqn:quiver_degree_formula} for a polygonal quiver $Q$ also holds for $Q_b$:
\begin{equation}
\sum_{k = 1}^{n} \frac{\arr{v_k,v_{k+1}}}{\ell_k\ell_{k+1}} = 12 - \tau + \sum_{\sigma \in \basket}{A(\sigma)},
\end{equation}
where $v_1,\ldots,v_n$ are the (cyclically ordered) vertices of the Hamiltonian subquiver of $Q_b$.
\end{lemma}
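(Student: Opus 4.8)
The plan is to show that the left-hand side of the claimed identity again computes the anticanonical self-intersection $(-K_{X_P})^2$, and then to quote the Noether-type formula~\eqref{eqn:degree_from_singularity_content}: the singularity content $(\tau,\basket)$ is an invariant of $P$ alone and makes no reference to a quiver, so the right-hand sides of the two displays agree, and it suffices to match their left-hand sides.

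To produce the left-hand side I would rerun the derivation of Section~\ref{subsec:degree_derivation} with the Hamiltonian subquiver $H$ of $Q_b=\bquiv(N,P)$ playing the role of the cyclic subquiver $C$ of $Q=\quiv(N,P)$. Fix an orientation-preserving isomorphism $N\cong\Z^2$, label the edges of $P$ cyclically $E_1,\ldots,E_n$, and let $v_i=m_i$ be the inner normal vector of the cone $\sigma_i$ over $E_i$, so that by Definition~\ref{def:bquiv_definition} the vertex $m_i$ of $Q_b$ carries the label $(w_{\sigma_i},\ell_i)$ and $\verts{H}=\verts{Q_b}=\{m_1,\ldots,m_n\}$. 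From the definition of the dual polygon, the vertex of $\dual P\subset\MQ$ associated to $E_i$ is $(1/\ell_i)\,m_i$; substituting into~\eqref{eqn:degree_determinant_form} and using $\det(m_k,m_{k+1})=\arr{m_k,m_{k+1};Q_b}$ together with Definition~\ref{def:hamiltonian_subquiver_of_polygonal_block} gives
\[
  (-K_{X_P})^2 \;=\; \sum_{k=1}^{n}\det\!\mleft(\tfrac{1}{\ell_k}m_k,\,\tfrac{1}{\ell_{k+1}}m_{k+1}\mright) \;=\; \sum_{k=1}^{n}\frac{\arr{m_k,m_{k+1};Q_b}}{\ell_k\,\ell_{k+1}} \;=\; \sum_{k=1}^{n}\frac{\arr{v_k,v_{k+1};H}}{\ell_k\,\ell_{k+1}}.
\]
Combining this with~\eqref{eqn:degree_from_singularity_content} yields the stated formula.

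A shorter route, which I would at least remark on, avoids re-deriving anything: the vectors $m_i$ used to form $H$ are precisely the inner normal vectors of the cones over the edges $E_i$, hence coincide with the $m_i$ chosen in Remark~\ref{rmk:cyclic_subquivers} to build the cyclic subquiver $C$ of $Q$, and the local indices $\ell_i$ coincide as well; since $\arr{x,y}$ depends only on the underlying vectors — it equals $\det(x,y)$ — the left-hand side of the claimed identity is term-by-term equal to that of~\eqref{eqn:quiver_degree_formula}. I expect no genuine difficulty: the argument is bookkeeping, and the only delicate point, that $\arr{v_1,v_2;Q_b}$ is well defined and equals $\arr{m_1,m_2;Q}$ on representatives, is the well-definedness observation recorded after Definition~\ref{def:block_quiver_of_decorated_quiver} and can be cited rather than reproved.
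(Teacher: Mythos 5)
Your proposal is correct and matches the paper's own argument: the paper likewise observes that each vertex of the Hamiltonian subquiver $H$ represents the same inner normal vector (with the same local index) as the corresponding vertex of the cyclic subquiver $C$ from Section~\ref{subsec:degree_derivation}, so that $\arr{m_i,m_j;H}=\arr{m_i,m_j;C}$ and the derivation of~\eqref{eqn:degree_quiver_form} carries over verbatim. Your ``shorter route'' remark is in fact exactly the justification the paper gives.
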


The quiver degree formula allows us to introduce a class of algebraic varieties that may be interesting from the viewpoint of classification theory. Fix an integer $n \geq 3$ and let $N$ be an oriented lattice. Then:

\begin{prop}[Markov Varieties]\label{prop:markov_varieties}Any Fano $n$-gon $P\subset\NQ$ determines a point on the hypersurface in $\Proj^{2n}\times\mathbb{A}^{1}$ defined by the following polynomial (with indices taken modulo $n$):
\begin{equation}\label{eqn:markov_variety}
    y_1 \ldots y_n\left(z\sum_{i = 1}^{n}\frac{x_i}{y_iy_{i+1}} - t\right).
\end{equation}
Here, $x_1,\ldots,x_n,y_1,\ldots,y_n,z$ are coordinates on $\Proj^{2n}$ and $t$ is the coordinate on $\mathbb{A}^1$.
\end{prop}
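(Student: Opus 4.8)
The plan is to read the coordinates of the required point straight off the (block) polygonal quiver of $P$ and then invoke the quiver degree formula. Given a Fano $n$-gon $P\subset\NQ$, I would pass to the block quiver $Q_b = \bquiv(N,P)$ and its Hamiltonian subquiver $H$ (Section~\ref{subsec:hamiltonian_subquivers}), whose vertices $m_1,\ldots,m_n$ are cyclically ordered and correspond to a cyclic labelling $E_1,\ldots,E_n$ of the edges of $P$. Write $(w_i,\ell_i)$ for the label of $m_i$, and recall from Section~\ref{subsec:background} that each $\ell_i\geq 1$. (Using $Q = \quiv(N,P)$ with a cyclic subquiver as in Remark~\ref{rmk:cyclic_subquivers} would work equally well; by the last sentence of Section~\ref{subsec:riemann_roch} the two choices are interchangeable.)

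Next I would write down the candidate point. Set
\[
x_i := \arr{m_i,m_{i+1};Q_b},\qquad y_i := \ell_i \ \ (i = 1,\ldots,n),\qquad z := 1,
\]
with indices taken modulo $n$, and $t := 12 - \tau + \sum_{\sigma\in\basket}A(\sigma)$, where $(\tau,\basket)$ is the singularity content of $P$. Since $z = 1$ (indeed already $y_i=\ell_i\geq 1$), the homogeneous coordinates are not all zero, so $[x_1:\cdots:x_n:y_1:\cdots:y_n:z]$ is a genuine point of $\Proj^{2n}$, and $t\in\Q\subset\mathbb{A}^1$. It is worth noting here that, after clearing denominators, \eqref{eqn:markov_variety} becomes the honest polynomial $z\sum_{i=1}^{n}x_i\prod_{j\neq i,i+1}y_j - t\,y_1\cdots y_n$, homogeneous of degree $n$ in the coordinates of $\Proj^{2n}$ with coefficients polynomial in $t$; the role of the extra variable $z$ is precisely to bring the first term up to degree $n$, so that the expression really does cut out a hypersurface in $\Proj^{2n}\times\mathbb{A}^1$.

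Finally I would verify vanishing. Since $y_1\cdots y_n = \ell_1\cdots\ell_n \neq 0$, the value of \eqref{eqn:markov_variety} at this point is zero if and only if its second factor vanishes, i.e.
\[
\sum_{i=1}^{n}\frac{\arr{m_i,m_{i+1};Q_b}}{\ell_i\ell_{i+1}} \;=\; 12 - \tau + \sum_{\sigma\in\basket}A(\sigma),
\]
and this is exactly the quiver degree formula for $Q_b$ proved in Section~\ref{subsec:hamiltonian_subquivers} (equivalently, \eqref{eqn:degree_quiver_form} combined with the Noether formula \eqref{eqn:degree_from_singularity_content}). This completes the verification.

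There is no genuine obstacle: the content of the statement is the quiver degree formula, and the rest is bookkeeping. The two points requiring a word of care are (i) confirming, as above, that \eqref{eqn:markov_variety} defines a hypersurface once denominators are cleared, and (ii) the (harmless) dependence of the point on choices --- the cyclic ordering of the edges of $P$, and for $Q$ rather than $Q_b$ also the choice of cyclic subquiver. The polynomial \eqref{eqn:markov_variety} is invariant under cyclic permutation of the indices, and the value of its second factor at the point is independent of the choice of cyclic subquiver by the last sentence of Section~\ref{subsec:riemann_roch}, so every admissible choice produces a point on the same hypersurface.
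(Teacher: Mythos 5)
Your proof is correct and follows essentially the same route as the paper: the entire content is the quiver degree formula~\eqref{eqn:quiver_degree_formula}, and the rest is the bookkeeping you describe (clearing denominators, checking the point is well-defined, and noting independence of the choice of cyclic subquiver). The only difference is the normalization of the point: the paper writes $\arr{m_i,m_{i+1};Q} = g(Q)\,a_{i,i+1}$ and takes $x_i = a_{i,i+1}$, $z = g(Q)$ (so that the coordinate $z$ records the mutation invariant of Proposition~\ref{prop:arrow_invariant}, in keeping with the Markov-equation results being generalized), whereas you take $z = 1$ and $x_i = \arr{m_i,m_{i+1}}$; both assignments visibly satisfy~\eqref{eqn:markov_variety}, so both prove the statement as written.
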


\begin{proof}Pass to the polygonal (or block) quiver $Q$ of $P$ and write $\arr{m_{\sigma},m_{\tau};Q} = g(Q)a_{\sigma\tau}$ for any pair of vertices $m_{\sigma},m_{\tau}$ of $Q$. Here, $g(Q)$ ($=g(Q_b)$) is the mutation invariant discussed in Proposition~\ref{prop:arrow_invariant}. The claim now follows from the quiver degree formula, which is the same for both the polygonal and the block quiver of $P$.
\end{proof}

Proposition~\ref{prop:markov_varieties} generalizes results of~\cite{HP10} and~\cite{AK13}, which state that every Fano triangle determines a solution to a Markov-type Diophantine equation.

\section{The Hamiltonian Property}\label{sec:hamiltonian_property}
The Hamiltonian subquiver of a block-polygonal quiver has been defined in Section~\ref{subsec:hamiltonian_subquivers}. A slightly unsatisfactory point is that this definition depends explicitly on the underlying Fano polygon. We will now present a different perspective on Hamiltonian subquivers which, in the polygonal case, allows them to be constructed directly from the block quiver. This construction will play an essential role in Section~\ref{subsec:reconstruction_characterization_general}.

Let $Q_b$ be a balanced block quiver with finitely many vertices, not necessarily coming from a Fano polygon. Fix a vertex $m$ of $Q_b$. For every $x \in \outof{m}$, the \emph{radial distance} from $m$ to $x$ is denoted $\radist{m,x;Q_b} = \radist{m,x}$ and is defined to be the maximal length of a path
\begin{equation}\label{eqn:path_from_m_to_x}
    x_0 = m \rightarrow x_1 \rightarrow \ldots \rightarrow x_k = x
\end{equation}
in $Q_b$ with the constraint that $x_1,\ldots,x_k$ are all distinct elements of $\outof{m}$. The length of~\eqref{eqn:path_from_m_to_x} is $k$ by definition. Note that at least one such path always exists, because $x$ lies in $\outof{m}$, and the length of the longest such path is bounded above, because $\verts{Q_b}$ is finite and $x_1,\ldots,x_k$ are all distinct. Thus, $\radist{m,x}$ is a well-defined positive integer satisfying $1 \leq \radist{m,x} \leq |\outof{m}|$ for all $x\in\outof{m}$. The \emph{radius} of $m$ is defined to be
\[
    r(m):=\min\{r(m,x) \mid x \in \outof{m}\}.
\]
Let $h(m)$ denote the number of vertices $x$ in $\outof{m}$ for which $r(m,x) = r(m)$. In the special case when $r(v) = 1$ and $h(v) = 1$ for all vertices $v$ of $Q_b$, we may define $\seq{m;Q_b} = \seq{m}$ to be the following finite sequence of vertices: $m_1:=m$ and, for $i \geq 1$, $m_{i+1}$ is the unique element of $\outof{m_i}$ for which $\radist{m_i,m_{i+1}} = 1$. The sequence terminates at $m_k$ if $m_{k+1} \in \{m_1,\ldots,m_k\}$.

\begin{definition}\label{def:hamiltonian_property}A balanced block quiver $Q_b$ with finitely many vertices has the \emph{Hamiltonian property} if every vertex $v$ satisfies $r(v) = 1$ and $h(v) = 1$, and there exists a vertex $m$ of $Q_b$ for which $\seq{m}: m_1,\ldots,m_n$ contains all vertices of $Q_b$, with $m_{n+1} = m_1$.
\end{definition}

Note that if $Q_b$ has the Hamiltonian property then, up to cyclic reordering, $\seq{m}$ is independent of the vertex $m$. This implies that the vertices of $Q_b$ can be cyclically ordered, $m_1,\ldots,m_n$, depending on their position in $\seq{m}$.

\begin{definition}\label{def:hamiltonian_subquiver_of_general_block}
Suppose that $Q_b$ has the Hamiltonian property with vertices $m_1,\ldots,m_n$, cyclically labelled as above. The \emph{Hamiltonian subquiver} $H$ of $Q_b$ has vertex set $\{m_1,\ldots,m_n\} = \verts{Q_b}$ with $\arr{m_i,m_j;H} := \arr{m_i,m_j;Q_b}$ if $j = (i+1) \bmod{n}$ and $\arr{m_i,m_j;H}:=0$ otherwise. Ever vertex of $H$ carries the same label as in $Q_b$.
\end{definition}

The Hamiltonian subquiver, if it exists, is unique by construction.

\begin{prop}\label{prop:polygonal_block_has_hamiltonian_property}If $P\subset\NQ$ is a Fano polygon then $Q_b:=\bquiv(N,P)$ has the Hamiltonian property, and the Hamiltonian subquivers from Definitions~\ref{def:hamiltonian_subquiver_of_general_block} and~\ref{def:hamiltonian_subquiver_of_polygonal_block} coincide.
\end{prop}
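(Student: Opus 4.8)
The plan is to show that the geometric/combinatorial data used in Definition~\ref{def:hamiltonian_subquiver_of_polygonal_block} — the cyclic ordering of edges of $P$ — is exactly recovered by the purely quiver-theoretic radial-distance recipe of Section~\ref{sec:hamiltonian_property}. The key dictionary is Remark~\ref{rmk:in_out_covers_all_but_one}: for each vertex $v$ of $Q_b = \bquiv(N,P)$, the set $\into{v}\cup\outof{v}$ omits at most one vertex of $Q_b\setminus\{v\}$, namely the inner normal of the unique edge (if any) parallel to the edge $E$ whose cone has inner normal $v$. So I would first set up the following picture: fix $v$, pick an orientation-preserving isomorphism $N\cong\Z^2$ sending $v\mapsto (0,1)^t$ (as in the proof of Proposition~\ref{prop:balancing_condition}), and observe that $\outof{v}$ consists precisely of the inner normals $m_1,\dots,m_s$ of the edges of $P$ encountered, in cyclic order, as one traverses the boundary of $P$ starting from the edge $E_v$ with inner normal $v$ and moving in the direction of increasing $v$-height along that side of $P$. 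Concretely, if $E_v,E_{(1)},E_{(2)},\dots$ are consecutive edges of $P$ in the cyclic order, with inner normals $v, m_{(1)}, m_{(2)},\dots$, then the $m_{(j)}$ stay in $\outof{v}$ (i.e. $\det(v,m_{(j)})>0$) exactly as long as the edge direction continues to turn the same way, which is until one reaches the edge antipodal/parallel to $E_v$; this is the content of Remark~\ref{rmk:in_out_covers_all_but_one} combined with the sign computation $\det(v,m)=w\cdot\arr{v,m}$ from~\eqref{eqn:balancing_misc}.

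The central claim to establish is then: for $x = m_{(j)}\in\outof{v}$, the radial distance $\radist{v,x;Q_b}$ equals $j$, i.e. the number of steps from $E_v$ to the edge with normal $x$ in the cyclic edge order. For the lower bound, the chain $v = m_{(0)}\to m_{(1)}\to\cdots\to m_{(j)} = x$ is a path of the required type: each consecutive pair of edges of $P$ shares a vertex, hence their cones are adjacent and $\det(m_{(i)},m_{(i+1)})>0$, giving an arrow $m_{(i)}\to m_{(i+1)}$ in $Q_b$; and each $m_{(i)}$ ($i\geq 1$) lies in $\outof{v}$ by the previous paragraph, and they are distinct because distinct edges of a convex polygon on the same "side" relative to $v$ have distinct inner normals. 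For the upper bound, suppose $v\to x_1\to\cdots\to x_k = x$ is any path with all $x_i$ distinct elements of $\outof{v}$. Here I would use convexity: an arrow $x_i\to x_{i+1}$ means $\det(x_i,x_{i+1})>0$, and for inner normals of edges of $P$ this forces $x_{i+1}$ to be strictly "later" than $x_i$ in the cyclic edge order restricted to $\outof{v}$ — essentially because, viewing the $m\in\outof{v}$ as rays in $\MQ$, the dual fan structure forces their cyclic (angular) order to be monotone, and an arrow always points in the direction of increasing angle. Hence any such path is strictly increasing in the edge order, so $k\leq j$. I expect this monotonicity lemma — that arrows among the vertices in $\outof{v}$ respect the cyclic edge order of $P$ — to be the main technical obstacle; it is really a statement that $Q_b$ restricted to $\outof{v}$ is a transitive tournament with the edge order as its linear order, and the cleanest proof is probably to note that the $m\in\outof{v}$, rescaled to lie on the boundary of $\dual{P}$, appear as consecutive vertices of $\dual{P}$ and that $\det(m,m')>0$ for such boundary vertices iff $m$ precedes $m'$.

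Granting the claim $\radist{v,x;Q_b} = j$, the rest is immediate. For every vertex $v$, the unique nearest element of $\outof{v}$ is $m_{(1)}$, the inner normal of the edge of $P$ immediately following $E_v$ in cyclic order; hence $\radist{v,m_{(1)}} = 1$ and no other element of $\outof{v}$ achieves distance $1$, so $r(v) = 1$ and $h(v) = 1$ for all $v$. Then $\seq{m;Q_b}$, starting at any $m$, successively picks out the inner normals of $E_{m}, E_{(1)}, E_{(2)},\dots$, i.e. it walks through the edges of $P$ in cyclic order, so it visits all $n = |\verts{P}|$ vertices of $Q_b$ and returns to $m$; thus $Q_b$ has the Hamiltonian property (Definition~\ref{def:hamiltonian_property}). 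Finally the induced cyclic order $m_1,\dots,m_n$ on $\verts{Q_b}$ from Definition~\ref{def:hamiltonian_property} is, by construction, exactly the cyclic edge order $E_1,\dots,E_n$ of $P$ used in Definition~\ref{def:hamiltonian_subquiver_of_polygonal_block}; since both Definitions~\ref{def:hamiltonian_subquiver_of_general_block} and~\ref{def:hamiltonian_subquiver_of_polygonal_block} then put an arrow $m_i\to m_{i+1}$ with multiplicity $\arr{m_i,m_{i+1};Q_b}$ and zero elsewhere, and keep all vertex labels, the two Hamiltonian subquivers coincide.
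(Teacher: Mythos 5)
Your proposal is correct and follows essentially the same route as the paper: fix a vertex $v$, normalize to $v=(0,1)^t$, observe that the arrows among the vertices of $\outof{v}$ form a transitive tournament ordered by the cyclic edge order of $P$ (the paper's ``main observation'' that $\det(m_i,m_j)>0$ for $i<j$ in $\outof{m}$), deduce $\radist{v,m_{(j)}}=j$, hence $r(v)=h(v)=1$ and $\seq{m}$ traces the edges of $P$ in cyclic order. The monotonicity step you flag as the main obstacle is exactly the statement the paper asserts (without further elaboration) as its key observation, so your treatment is at least as detailed as the original.
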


\begin{proof}Choose a vertex $m$ of $Q_b$. By Definition~\ref{def:bquiv_definition}, $m$ is the inner normal vector of the cone over an edge of $P$. Choose an orientation-preserving isomorphism between $N$ and $\Z^{2}$ such that $m = (0,1)^{t}$. The Fano polygon $P$ is then represented by Figure~\ref{fig:proof_hamiltonian_property}.
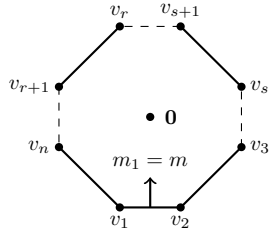
\begin{figure}[h!]
    \centering
    \begin{tikzpicture}[thick,scale=0.8, every node/.style={transform shape}]
        \coordinate [label={right:\small\hspace{1mm}$\orig$}] (O) at (0,0);
        \draw [fill=black] (O) circle (0.05);

        \coordinate [label={below:\small $v_2$}] (v1) at (0.5,-1.5);
        \draw [fill=black] (v1) circle (0.05);
        \coordinate [label={right:\small $v_3$}] (v2) at (1.5,-0.5);
        \draw [fill=black] (v2) circle (0.05);
        \coordinate [label={right:\small $v_s$}] (vs) at (1.5,0.5);
        \draw [fill=black] (vs) circle (0.05);
        \coordinate [label={above:\small $v_{s+1}$}] (vs1) at (0.5,1.5);
        \draw [fill=black] (vs1) circle (0.05);
        \coordinate [label={above:\small $v_r$}] (vr) at (-0.5,1.5);
        \draw [fill=black] (vr) circle (0.05);
        \coordinate [label={left:\small $v_{r+1}$}] (vr1) at (-1.5,0.5);
        \draw [fill=black] (vr1) circle (0.05);
        \coordinate [label={left:\small $v_{n}$}] (vk) at (-1.5,-0.5);
        \draw [fill=black] (vk) circle (0.05);
        \coordinate [label={below:\small $v_1$}] (vk1) at (-0.5,-1.5);
        \draw [fill=black] (vk1) circle (0.05);

        \draw [thick]  (v1) -- (v2);
        \draw [dashed,thin] (v2) -- (vs);
        \draw [thick] (vs) -- (vs1);
        \draw [dashed,thin] (vs1) -- (vr);
        \draw [thick] (vr) -- (vr1);
        \draw [dashed,thin] (vr1) -- (vk);
        \draw [thick] (vk) -- (vk1);
        \draw [thick] (vk1) -- (v1);

        \coordinate (amt) at (0,-1.5);
        \coordinate [label={above:\small $m_1 = m$}] (amh) at (0,-1);
        \draw [->,thick] (amt) -- (amh);
    \end{tikzpicture}
\caption{The Proof of Proposition~\ref{prop:polygonal_block_has_hamiltonian_property}}\label{fig:proof_hamiltonian_property}
\end{figure}
Label the vertices of $P$ cyclically: $v_1,\ldots,v_n$ and let $m_i$ denote the inner normal vector to the cone over the edge $E_i:=\conv{v_i,v_{i+1}}$, $i = 1,\ldots,n$ with indices taken modulo $n$. In this notation, $m = m_1$ and $\outof{m} = \{m_2,\ldots,m_s\}$. The main observation is as follows:
\[
    \arr{m_i,m_j} = \det(m_i,m_j) > 0 \text{ whenever } m_i,m_j \in \outof{m} \text{ and } i < j.
\]
It follows immediately that $r(m,m_k) = k-1$ for $k = 2,\ldots,s$. In particular $r(m) = 1$ and this is attained at precisely one vertex of $Q_b$, namely $m_2$. So $h(m) = 1$. Repeated application of this argument shows that $r(v) = 1$ and $h(v) = 1$ for all vertices $v$ of $Q_b$ and that $\seq{m}$ is $m=m_1,m_2,\ldots,m_n$, with $m_{n+1} = m_1$. Since $\{m_1,\ldots,m_n\} = \verts{Q_b}$, we conclude that $Q_b$ has the Hamiltonian property. Moreover, the ordering on the vertices of $Q_b$ given by $\seq{m}$ agrees with the one coming from cyclically ordering the edges of $P$ in an orientation-preserving manner. Thus, the Hamiltonian subquivers from Definitions~\ref{def:hamiltonian_subquiver_of_polygonal_block} and~\ref{def:hamiltonian_subquiver_of_general_block} coincide.
\end{proof}

Consider a block-polygonal quiver $Q_b = \bquiv(N,P)$. Two key features of the Hamiltonian subquiver $H$ are that $\small\textbf{(1)}$ every oriented cycle of $H$ passes through each vertex of $Q_b$ exactly once (hence the name Hamiltonian) and $\small\textbf{(2)}$ the arrows of $H$ order the vertices of $Q_b$ in a way that coincides with the cyclic ordering of the edges of $P$ induced by the orientation on $N$. It is natural to ask if $\small\textbf{(1)}$ implies $\small\textbf{(2)}$. This is not the case, as shown in Example~\ref{eg:non_hamiltonian_subquiver}.

\begin{example}\label{eg:non_hamiltonian_subquiver}Let $N = \Z^{2}$ with the standard orientation and let $P \subset \Q^2$ be the Fano polygon with vertices $(2,1), (2,3), (1,4), (-1,4), (-2,3), (-2,1), (-1,-1), (1,-1)$. Label the vertices $v_1,\ldots,v_8$ in the (cyclic) order shown and let $m_i$ be the inner normal vector to the edge $\conv{v_i,v_{i+1}}$, with indices taken modulo $8$. The block quiver of $P$ contains the following cyclic subquiver:
\[
    m_1 \rightarrow m_3 \rightarrow m_5 \rightarrow m_7 \rightarrow m_2 \rightarrow m_4 \rightarrow m_6 \rightarrow m_8 \rightarrow m_1
\]
This subquiver is not Hamiltonian. It satisfies condition $\small\textbf{(1)}$ above, but not condition $\small\textbf{(2)}$.
\end{example}

\begin{example}\label{eg:classification_of_small_block_quivers}For a fixed positive integer $n\geq3$, it is often useful to know all types of block quivers that can arise from a Fano $n$-gon. The Hamiltonian property is a useful starting point for such problems. To illustrate this, let $Q_b$ be the block quiver of a Fano triangle ($n = 3$). By Proposition~\ref{prop:polygonal_block_has_hamiltonian_property}, $Q_b$ must have a Hamiltonian subquiver with three vertices as shown in Figure~\ref{fig:block_quiver_triangle}. But we have now drawn all vertices of $Q_b$ and there can be no further arrows between any pair of vertices. So $Q_b$ must equal its Hamiltonian subquiver. In other words, the general block quiver of a Fano triangle can only take the form shown in Figure~\ref{fig:block_quiver_triangle}.

\begin{figure}[h!]
    \centering
    \begin{tikzpicture}[thick,scale=0.8, every node/.style={transform shape}]
      \coordinate (Q1v1) at (0, 0.5);
      \coordinate [label={above:\small $(w_1,\ell_1)$}]  (Q1v1_lab) at (0, 0.65);
      \draw (Q1v1) circle (0.1);
      \draw [fill=black] (Q1v1) circle (0.05);

      \coordinate (Q1v2) at (-1, -1);
      \coordinate [label={below:\small $(w_2,\ell_2)$}]  (Q1v2_lab) at (-1, -1.15);
      \draw [fill=black] (Q1v2) circle (0.05);
      \draw (Q1v2) circle (0.1);

      \coordinate  (Q1v3) at (1, -1);
      \coordinate [label={below:\small $(w_3,\ell_3)$}]  (Q1v3_lab) at (1, -1.15);
      \draw [fill=black] (Q1v3) circle (0.05);
      \draw (Q1v3) circle (0.1);

      \draw [middlearrow={latex}] (Q1v1) -- node[left] {\hspace{-10mm}$\alpha_{12}$} (Q1v2);
      \draw [middlearrow={latex}] (Q1v2) -- node[below] {$\alpha_{23}$} (Q1v3);
      \draw [middlearrow={latex}] (Q1v3) -- node[right] {\hspace{1mm}$\alpha_{31}$} (Q1v1);
\end{tikzpicture}
\caption{The Block Quiver of a Fano Triangle.}\label{fig:block_quiver_triangle}
\end{figure}
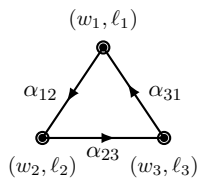

Next, let $Q_b$ be the block quiver of a Fano quadrilateral $P$ ($n=4$). Then $Q_b$ has a Hamiltonian subquiver with four vertices, as shown in Figure~\ref{fig:block_quiver_quadrilateral} (right). It remains to determine whether any arrows can exist between pairs of vertices not joined by arrows in the Hamiltonian subquiver. There are three possibilities, depending on whether $P$ has zero, one or two pairs of parallel edges. Thus, up to relabelling vertices, the block quiver of a Fano quadrilateral can take one of three possible forms, as shown in Figure~\ref{fig:block_quiver_quadrilateral}.

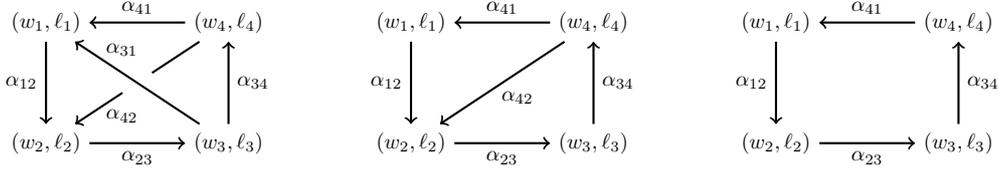
\begin{figure}[h!]
    \centering
    \captionsetup{justification=centering}
    \begin{tikzpicture}[thick,scale=0.8, every node/.style={transform shape}]
      \node at (-7.5,1)   (Q0v1) {\small$(w_1,\ell_1)$};
      \node at (-7.5,-1)  (Q0v2) {\small$(w_2,\ell_2)$};
      \node at (-4.5,-1)  (Q0v3) {\small$(w_3,\ell_3)$};
      \node at (-4.5,1)   (Q0v4) {\small$(w_4,\ell_4)$};
      \node at (-5.90,0.07) (blank1) {};
      \node at (-6.10,-0.07) (blank2) {};
      \node at (-6.25,-0.55) (lab42Q0) {\small$\alpha_{42}$};
      \node at (-6.25,0.55)  (lab31Q0) {\small$\alpha_{31}$};
      \draw [->, thick] (Q0v1) -- node[left]  {\small$\alpha_{12}$} (Q0v2);
      \draw [->, thick] (Q0v2) -- node[below] {\small$\alpha_{23}$} (Q0v3);
      \draw [->, thick] (Q0v3) -- node[right] {\small$\alpha_{34}$} (Q0v4);
      \draw [->, thick] (Q0v4) -- node[above] {\small$\alpha_{41}$} (Q0v1);
      \draw [->, thick] (Q0v3) -- (Q0v1);
      \draw [-, thick]  (Q0v4) -- (blank1);
      \draw [->, thick] (blank2) -- (Q0v2);

      \node at (-1.5,1)  (Q1v1) {\small$(w_1,\ell_1)$};
      \node at (-1.5,-1) (Q1v2) {\small$(w_2,\ell_2)$};
      \node at (1.5,-1)  (Q1v3) {\small$(w_3,\ell_3)$};
      \node at (1.5,1)   (Q1v4) {\small$(w_4,\ell_4)$};
      \node at (0.25,-0.25)   (lab42Q1) {\small$\alpha_{42}$};
      \draw [->, thick] (Q1v1) -- node[left]  {\small$\alpha_{12}$} (Q1v2);
      \draw [->, thick] (Q1v2) -- node[below] {\small$\alpha_{23}$} (Q1v3);
      \draw [->, thick] (Q1v3) -- node[right] {\small$\alpha_{34}$} (Q1v4);
      \draw [->, thick] (Q1v4) -- node[above] {\small$\alpha_{41}$} (Q1v1);
      \draw [->, thick] (Q1v4) -- (Q1v2);

      \node at (4.5,1)  (Q2v1) {\small$(w_1,\ell_1)$};
      \node at (4.5,-1) (Q2v2) {\small$(w_2,\ell_2)$};
      \node at (7.5,-1) (Q2v3) {\small$(w_3,\ell_3)$};
      \node at (7.5,1)  (Q2v4) {\small$(w_4,\ell_4)$};
      \draw [->, thick] (Q2v1) -- node[left]  {\small$\alpha_{12}$} (Q2v2);
      \draw [->, thick] (Q2v2) -- node[below] {\small$\alpha_{23}$} (Q2v3);
      \draw [->, thick] (Q2v3) -- node[right] {\small$\alpha_{34}$} (Q2v4);
      \draw [->, thick] (Q2v4) -- node[above] {\small$\alpha_{41}$} (Q2v1);
    \end{tikzpicture}
\caption{The Block Quiver of a Fano Quadrilateral with zero (left), one (middle) and two (right) pairs of parallel edges.}\label{fig:block_quiver_quadrilateral}
\end{figure}
\end{example}

\begin{example}[A Non-Existence Result]\label{eg:non_existence_of_fanos_with_prescribed_singularities}Let $P \subset \NQ$ be a Fano triangle with singularity content $(\tau,\basket)$. By Example~\ref{eg:classification_of_small_block_quivers}, we know that the block quiver $Q_b$ of $P$ takes the general form shown in Figure~\ref{fig:block_quiver_triangle}. In the notation of Figure~\ref{fig:block_quiver_triangle}, assume that $P$ has \emph{coprime widths}: $\gcd{w_i,w_j} = 1$ whenever $i \neq j$. The balancing condition at each vertex of $Q_b$ gives the following system of linear equations:
\[
w_1\alpha_{12} - w_3\alpha_{23} = 0 \quad;\quad w_2\alpha_{23} - w_1\alpha_{31} = 0 \quad;\quad w_3\alpha_{31} - w_2\alpha_{12} = 0.
\]
Since the $w_i$ are pairwise coprime, the space of integer solutions $(\alpha_{12},\alpha_{23},\alpha_{31})$ to this system is $\Z$-spanned by $(w_3,w_1,w_2)$, so that
\[
    (\alpha_{12},\alpha_{23},\alpha_{31}) = g\cdot(w_3,w_1,w_2),
\]
where $g = g(Q_b)$ is the mutation invariant discussed in Proposition~\ref{prop:arrow_invariant}. Thus, in the case of coprime widths, the quiver degree formula for $Q_b$ specializes to:
\begin{equation}\label{eqn:quiver_degree_formula_triangles}
  g(w_1\ell_1 + w_2\ell_2 + w_3\ell_3) = \left(12 - \tau + \sum_{\sigma \in \basket}{A(\sigma)}\right)\ell_1\ell_2\ell_3.
\end{equation}
As an application, there can not exist a Fano triangle $P$, such that the maximal cones of $\Sigma_P$ are smooth ($w_1 = 1 = \ell_1$), a $\frac{1}{3}(1,1)$ singularity ($w_2 = 1, \ell_2 = 3$) and a $\frac{1}{4}(1,1)$ singularity ($w_2 = 2 = \ell_2$). Such a $P$ would have singularity content $(2,\{\frac{1}{3}(1,1)\})$, and since $A(\frac{1}{3}(1,1)) = -\frac{5}{3}$, it would follow from Equation~\eqref{eqn:quiver_degree_formula_triangles} that there is an integer $g$ satisfying
\[
    g(1\cdot1 + 1\cdot3 + 2\cdot2) = (12 - 2 - (5/3))1\cdot3\cdot2 \quad \text{i.e.}\quad 8g = 50.
\]
This is a contradiction, so no such $P$ exists. Geometrically, this means that there does not exist a toric Fano surface of Picard rank $1$ with isolated singularities $\frac{1}{3}(1,1)$ and $\frac{1}{4}(1,1)$.
\end{example}

\section{Characterization of Polygonal Quivers}\label{sec:reconstruction_and_characterization}
The present discussion has been about quivers arising from Fano polygons. We have seen that polygonal quivers are balanced (Proposition~\ref{def:mutation_of_balanced_quivers}), but not every balanced quiver is polygonal (Example~\ref{eg:family_of_non_polygonal_quivers}). It is natural to ask which balanced quivers arise from Fano polygons. We will first demonstrate an ad hoc approach to this question (Example~\ref{eg:reconstruction_for_P113}). This is more elementary than the general discussion starting in Section~\ref{subsec:reconstruction_characterization_triangles}, and may be useful in the study of individual examples. See also Example~\ref{eg:reconstruction_for_P113_using_result}.

\begin{example}\label{eg:reconstruction_for_P113}
We will investigate whether the following balanced quiver $Q$ is polygonal.

\begin{figure}[h!]
    \centering
    \begin{tikzpicture}[thick,scale=0.8, every node/.style={transform shape}]
      \coordinate (Q1v1) at (0, 0.5);
      \coordinate [label={above:\small $(1,1)$}]  (Q1v1_lab) at (0, 0.65);
      \coordinate [label={left:\small $m_1$}]  (Q1v1_lab_m) at (-0.15, 0.5);
      \draw (Q1v1) circle (0.1);
      \draw [fill=black] (Q1v1) circle (0.05);

      \coordinate (Q1v2) at (-1, -1);
      \coordinate [label={below:\small $(1,1)$}]  (Q1v2_lab) at (-1, -1.15);
      \coordinate [label={left:\small $m_2$}]  (Q1v2_lab_m) at (-1.15, -1);
      \draw [fill=black] (Q1v2) circle (0.05);
      \draw (Q1v2) circle (0.1);

      \coordinate  (Q1v3) at (1, -1);
      \coordinate [label={below:\small $(1,3)$}]  (Q1v3_lab) at (1, -1.15);
      \coordinate [label={right:\small $m_3$}]  (Q1v3_lab_m) at (1.15, -1);
      \draw [fill=black] (Q1v3) circle (0.05);
      \draw (Q1v3) circle (0.1);

      \draw [middlearrow={latex}] (Q1v1) -- node[left] {\hspace{-6mm}$5$} (Q1v2);
      \draw [middlearrow={latex}] (Q1v2) -- node[below] {$5$} (Q1v3);
      \draw [middlearrow={latex}] (Q1v3) -- node[right] {\hspace{1mm}$5$} (Q1v1);
\end{tikzpicture}
\end{figure}

As a first step, pass to the block quiver $Q_b$ to simplify calculations. Note that $Q \cong Q_b$ in this example, but that this step would be nontrivial if one were to apply this calculation, for instance, to the quiver shown in Figure~\ref{fig:P1xP1_P112_quivers} (bottom). The problem to consider now is whether $Q_b$ is the block quiver (Definition~\ref{def:bquiv_definition}) of some Fano polygon $P$.

Suppose there exists a Fano polygon $P \subset \NQ$ for which $Q_b$ is a block quiver. Then the vertices $m_1,m_2,m_3$ of $Q$, as shown above, represent primitive vectors in the dual lattice $M$. Fix an isomorphism of $M$ with $\Z^2$ such that
\[
    m_1 = (0,1)^{t}, \quad m_2 = (x_2,y_2)^{t}, \quad m_3 = (x_3,y_3)^{t},
\]
where the $x_i,y_j$ are at present unknown. The (signed) number of arrows between $m_i$ and $m_j$ is given by the equation $\arr{m_i,m_j ; Q_b} = \det(m_i,m_j)$. Applying this to each vertex of $Q$ gives a system of three equations, from which it follows that
\[
    m_1 = (0,1)^{t}, \quad m_2 = (-5,y_2)^{t}, \quad m_3 = (5, -1 - y_2)^{t}.
\]
The quantity $y_2$ is still unknown. Now apply the change of basis $\varphi_k:M \to M, (x,y)^{t} \mapsto (x, y - kx)^{t}$, for some integer $k$ to be fixed later. In the new basis:
\[
    m_1 = (0,1)^{t}, \quad m_2 = (-5, y_2 + 5k)^{t}, \quad m_3 = (5, -1 - (y_2 + 5k))^{t}.
\]
The primitivity of $m_2$ and $m_3$ implies that $y_2$ is not congruent to $0$ or $4$ modulo $5$. So there are three cases to consider. First, assume that $y_2 \equiv 1 \bmod{5}$. Then, after fixing a suitable integer $k$, we arrive at a basis in which
\[
    m_1 = (0,1)^{t}, \quad m_2 = (-5, 1)^{t}, \quad m_3 = (5, -2)^{t}.
\]
Since the $m_i$ are being interpreted as inner normal vectors, consider the three hyperplanes in $\NQ$ defined by the equations: $\langle m_i, z \rangle = -\ell_i$, where $z \in \NQ$, $\langle\cdot,\cdot\rangle$ is the standard pairing and $(w_i,\ell_i)$ is the label of $m_i$ in $Q_b$ for $i = 1,2,3$. The vertices of $P$ are solutions $(u,v) \in \Z^2$ to pairs of these equations. Explicitly, the equations in this basis are
\begin{equation}\label{eqn:hyperplane_equations}
    v = -1 \quad;\quad  -5u + v = -1 \quad;\quad 5u-2v = -3.
\end{equation}
Solving these pairwise, we obtain the vertex set $\{(0,-1),(-1,-1),(1,4)\}$ for $P \subset \Q^2$. Since there is a linear relation: $1\cdot(1,4) + 1\cdot(-1,-1) + 3\cdot(0,-1) = \orig$, and since $(0,-1)$ and $(-1,-1)$ span the ambient lattice $\Z^{2}$, we conclude that the associated toric variety is weighted projective space $\Proj(1,1,3)$. A direct check shows that the polygonal quiver of $P$ is isomorphic to $Q$. The two remaining cases: $y_2 \equiv 2,3\bmod{5}$ do not yield any Fano polygons, because the hyperplane equations analogous to~\eqref{eqn:hyperplane_equations} in these cases contain pairs which can not be solved simultaneously over $\Z^2$. We conclude that the quiver $Q$ is polygonal, and it determines a unique Fano polygon, up to the action of $SL_2(\Z)$.
\end{example}

\subsection{The Triangle Case}\label{subsec:reconstruction_characterization_triangles}
Let $Q$ be a balanced quiver whose block quiver $Q_b$ has three vertices. Suppose~$\lab{1}$ $Q_b$ is cyclic, as shown in Figure~\ref{fig:block_quiver_triangle}, with $\alpha_{ij}\in \Z_{\geq 1}$. Let $m_i$ denote the vertex with label $(w_i,\ell_i)$, so that $\arr{m_i,m_j;Q_b} = \alpha_{ij}$. Suppose~$\lab{2}$ $w_i,\ell_i \geq 1$ for $i = 1,2,3$. Choose a vertex of $Q_b$: up to cyclic relabelling, let this vertex be $m_1$. Define integers:
\begin{equation}\label{eqn:y_coordinates_triangles}
    y_1 := -\ell_1 \quad;\quad y_2 := -\ell_1 \quad;\quad y_3 := y_2 + w_2\cdot\arr{m_1,m_2}.
\end{equation}
Assume $\lab{3}$~$y_3 > 0$ and~$\lab{4}$ there exists an $x \in \Z$ satisfying $\gcd{x,y_1} = \gcd{x+w_1,y_2} = 1$. For such an $x$, define the following rational numbers, with indices taken modulo $3$:
\begin{align*}
    x_1 := x \quad;\quad &x_2 := x + w_1 \quad;\quad x_3:=(w_3\ell_3 + x_1y_3)/y_1 \quad;\quad x_3' := (x_2y_3 - w_2\ell_2)/y_2  \\ &s_j := -\arr{m_1,m_j} \quad;\quad t_j:=(x_{j+1} - x_j)/w_j \quad \text{for } j = 2,3.
\end{align*}
Assume~$\lab{5}$ $t_2,t_3 \in \Z$,~$\lab{6}$ $x_3 = x_3'$ and~$\lab{7}$ $\gcd{x_3,y_3} = \gcd{s_j,t_j} = 1$ for $j = 2,3$.

\begin{prop}\label{prop:reconstruction_triangles}If $Q_b$ satisfies conditions~$\lab{1}$ to~$\lab{7}$ for some vertex $m_1$ then it is the block quiver of a Fano triangle. Conversely, if $Q_b$ is the block quiver of a Fano triangle then there is a vertex $m_1$ for which conditions~$\lab{1}$ to~$\lab{7}$ are satisfied.
\end{prop}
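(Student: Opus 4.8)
The plan is to establish both implications by an explicit reconstruction, generalizing the ad hoc computation of Example~\ref{eg:reconstruction_for_P113} to an arbitrary Fano triangle; throughout I may use that $Q$, hence $Q_b$ by Lemma~\ref{lem:block_quivers_also_balanced}, is balanced.

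\emph{From the conditions to a Fano triangle.} Assume $Q_b$ satisfies $\lab{1}$--$\lab{7}$ for the vertex $m_1$, and let $x\in\Z$ be a value furnished by $\lab{4}$. Taking $N=\Z^2$ with its standard orientation, I would set $v_i:=(x_i,y_i)\in\NQ=\Q^2$ for $i=1,2,3$, using the integers $y_1=y_2=-\ell_1$, $y_3=y_2+w_2\arr{m_1,m_2}$ of \eqref{eqn:y_coordinates_triangles} and the rationals $x_1=x$, $x_2=x+w_1$, $x_3$ of the statement. Conditions $\lab{2}$ and $\lab{5}$ make all the $v_i$ integral (from $t_3\in\Z$ one gets $x_3=x_1-w_3t_3\in\Z$), primitivity of $v_1,v_2$ is exactly what $\lab{4}$ asserts, and primitivity of $v_3$ is the clause $\gcd{x_3,y_3}=1$ of $\lab{7}$. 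Since $w_1\ge1$ and $y_3>0>-\ell_1$ ($\lab{2}$, $\lab{3}$), $P:=\conv{v_1,v_2,v_3}$ is a non-degenerate triangle. The heart of the argument is the identification of its edge data: $E_1=\conv{v_1,v_2}$ lies at height $-\ell_1$, so its primitive inner normal is $n_1=(0,1)^t$, with local index $\ell_1$ and width $x_2-x_1=w_1$; for $E_2=\conv{v_2,v_3}$ one has $v_3-v_2=w_2(t_2,-s_2)$ with $(t_2,-s_2)$ primitive by $\lab{7}$, so the width is $w_2$, and a short determinant computation using the identity $w_2\ell_2=x_2y_3-x_3y_2$ — which is precisely $\lab{6}$ rearranged — gives $n_2=(s_2,t_2)^t$ with local index $\ell_2$; for $E_3=\conv{v_3,v_1}$ one uses $w_3\ell_3=x_3y_1-x_1y_3$ (the definition of $x_3$) together with the balancing relation $w_2\arr{m_1,m_2}=w_3\arr{m_3,m_1}$ at $m_1$ to get $v_1-v_3=w_3(t_3,-s_3)$, hence width $w_3$ and $n_3=(s_3,t_3)^t$ with local index $\ell_3$. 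Since $\langle n_i,v_i\rangle=-\ell_i<0$ for every edge, $\orig$ lies in the strict interior of $P$, so $P$ is a Fano triangle. Finally $n_i$ carries exactly the label $(w_i,\ell_i)$ of $m_i$, while $\det(n_1,n_2)=-s_2=\arr{m_1,m_2}$, $\det(n_3,n_1)=s_3=\arr{m_3,m_1}$, and $\det(n_2,n_3)=\arr{m_2,m_3}$ after substituting the $s_j,t_j$ and using balancing at the remaining vertices (it collapses to $w_1\arr{m_3,m_1}=w_2\arr{m_2,m_3}$). Hence $m_i\mapsto n_i$ is an isomorphism of decorated quivers $Q_b\cong\bquiv(N,P)$.

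\emph{From a Fano triangle to the conditions.} Conversely, suppose $Q_b=\bquiv(N,P)$ for a Fano triangle $P$. By Example~\ref{eg:classification_of_small_block_quivers}, $Q_b$ has the cyclic shape of Figure~\ref{fig:block_quiver_triangle} with $\alpha_{ij}\ge1$, which is $\lab{1}$, and $\lab{2}$ holds because widths and local indices of cones are positive. Now fix any edge of $P$ and an orientation-preserving identification $N\cong\Z^2$ under which its inner normal becomes $(0,1)^t$; call this edge $F_1$ and the other two $F_2,F_3$, in the cyclic order induced by the orientation, so that the inner normals $n_i$ of $F_i$ are the vertices $m_i$ of $Q_b$ and the arrow directions of $\bquiv(N,P)$ along its Hamiltonian subquiver (Proposition~\ref{prop:polygonal_block_has_hamiltonian_property}) agree with this cyclic order. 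Writing $F_1=\conv{p_1,p_2}$ with $p_1,p_2$ in the positive sense gives $p_1=(x_1,-\ell_1)$, $p_2=(x_1+w_1,-\ell_1)$, and the remaining vertex $p_3=(x_3,y_3)$; here $y_3=\hmax>0$ because $\orig$ is interior, which is $\lab{3}$, and $y_3=-\ell_1+w_2\arr{m_1,m_2}$ since the $m_1$-height on $P$ runs from $-\ell_1$ on $F_1$ to $y_3$ at $p_3$ so $D(m_1)=y_3+\ell_1$, which equals $w_2\arr{m_1,m_2}$ by balancing, matching \eqref{eqn:y_coordinates_triangles}. Primitivity of $p_1,p_2$ gives $\lab{4}$ (with $x=x_1$); the fact that the edge vectors of $F_2,F_3$ are $w_2$, resp. $w_3$, times primitive vectors yields the integrality $\lab{5}$ and the coprimality clauses $\gcd{s_j,t_j}=1$ of $\lab{7}$; the coincidence $x_3=x_3'$ of $\lab{6}$ is just the statement that the two hyperplane formulas compute the single point $p_3$ (this is where $P$ being a closed triangle is used), which I would verify by the same two determinant computations as in the forward direction; and primitivity of $p_3$ gives the clause $\gcd{x_3,y_3}=1$. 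Thus $\lab{1}$--$\lab{7}$ hold for this choice of $m_1$.

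\emph{Main obstacle.} The proof is essentially bookkeeping, and the two delicate points are: (i) the orientation conventions — keeping the chosen basis orientation-preserving, making $p_2$ the endpoint of $F_1$ in the direction of increasing first coordinate, and matching the cyclic order of the $n_i$ with the arrow directions in Figure~\ref{fig:block_quiver_triangle}; and (ii) tracking precisely where the balancing of $Q_b$ is used, namely through $w_2\arr{m_1,m_2}=w_3\arr{m_3,m_1}$ and $w_1\arr{m_3,m_1}=w_2\arr{m_2,m_3}$, without which the reconstructed edge $E_3$ would fail to have width $w_3$ and $\det(n_2,n_3)$ would fail to equal the arrow count $\arr{m_2,m_3}$. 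Condition $\lab{6}$ is the analogous consistency requirement on the vertex side, so the real work is checking that the determinant identities close up exactly when $\lab{1}$--$\lab{7}$ and balancing both hold.
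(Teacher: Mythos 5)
Your argument is correct and follows essentially the same route as the paper: in the forward direction you build $P=\conv{v_1,v_2,v_3}$ from the prescribed coordinates, verify primitivity and the pairing identities $\langle m^i,v_i\rangle=\langle m^i,v_{i+1}\rangle=-\ell_i$ (using $\lab{6}$ for $i=2$ and balancing at $m_1$ for $i=3$), and match labels and determinants; in the converse you normalize $m^1=(0,1)^t$ and read off the conditions from primitivity, lattice lengths, and the two expressions for $x_3$. The only point stated more tersely than it deserves is the deduction that $\orig$ is interior (one should note that the third vertex lies at height $>-\ell_i$ for each edge), but the paper's own proof is equally brief there.
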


\begin{proof}Unless otherwise stated, all indices will be taken modulo $3$. Suppose that $Q_b$ satisfies conditions~$\lab{1}$ to~$\lab{7}$ for some vertex $m_1$. Let $N = \Z^{2}$ with the standard orientation. Let $v_i := (x_i,y_i) \in \NQ$ for $i = 1,2,3$ and let $P:=\conv{v_1,v_2,v_3} \subset \NQ$. We will show that $P$ is a Fano triangle and $Q_b \cong \bquiv(N,P)$.

Condition~$\lab{5}$ implies that $x_3$ is an integer. Thus, by~$\lab{4}$ and~$\lab{7}$, the $v_i$ are primitive lattice vectors in $N$. These vectors are all distinct: $v_1 \neq v_2$ since $x_2 - x_1 = w_1 \geq 1$ by~$\lab{2}$. Similarly $v_1,v_2$ are both distinct from $v_3$ because their $y$-coordinates have different signs: $y_1 = y_2 = -\ell_1$, which is negative by~$\lab{2}$, while $y_3$ is positive by~$\lab{3}$. This observation also shows that $v_1,v_2$ both lie on the line $\{(x,y) \mid y = -\ell_1\} \subset \NQ$, but this line does not contain $v_3$. So $P$ is a $2$-dimensional lattice polytope in $\NQ$ whose vertices are primitive lattice vectors. Now let $M:=\Hom{N,\Z}$ and let $m^{1}:=(0,1)^{t}$, $m^{2}:=(s_2,t_2)^{t}$, $m^{3}:=(s_3,t_3)^{t} \in \MQ$. By~$\lab{5}$ and~$\lab{7}$, the $m^{i}$ are primitive lattice vectors in $M$. A direct calculation shows that:
\begin{equation}\label{eqn:inner_normal_equations_triangles}
        \langle m^{i},v_i \rangle = \langle m^{i},v_{i+1} \rangle = -\ell_i \text{ for } i=1,2,3,
\end{equation}
where $\langle \cdot,\cdot \rangle: M \times N \to \Z$ is the natural pairing. Condition~$\lab{6}$ is used to show~\eqref{eqn:inner_normal_equations_triangles} for $i = 2$. Similarly the shape of $Q_b$, assumed in~$\lab{1}$, is used to show $w_2\cdot\arr{m_1,m_2} = w_3\cdot\arr{m_3,m_1}$ by balancing at $m_1$. This is used to show~\eqref{eqn:inner_normal_equations_triangles} for $i = 3$. By condition~$\lab{2}$, the equations~\eqref{eqn:inner_normal_equations_triangles} show that $m^{i}$ must be the inner normal vector of (the cone over) the edge $\conv{v_i,v_{i+1}}$ of $P$. In particular, the origin must lie in the strict interior of $P$, showing that $P$ is a \emph{Fano} triangle.

By Definition~\ref{def:bquiv_definition}, $\bquiv(N,P)$ has three vertices $m^{1},m^{2}$ and $m^{3}$. In the present basis, the primitive vectors in directions $v_2-v_1$, $v_3-v_2$ and $v_1-v_3$ are $(1,0), (t_2,-s_2)$ and $(t_3,-s_3)$ respectively. From this we see that the cone over the edge $\conv{v_i,v_{i+1}}$ of $P$ has width $w_i$. Combining this with~\eqref{eqn:inner_normal_equations_triangles} shows that the label of $m^{i}$ in $\bquiv(N,P)$ is $(w_i,\ell_i)$. This equals the label of $m_i$ in $Q_b$. A direct calculation of determinants also shows that:
\[
    \arr{m^{i},m^{j}; \bquiv(N,P)} = \arr{m_i,m_j ; Q_b} \text{ for } i,j \in \{1,2,3\}.
\]
Therefore we conclude that $Q_b \cong \bquiv(N,P)$ i.e.~$Q_b$ is the block quiver of a Fano triangle.

Conversely, suppose $Q_b \cong \bquiv(N,P)$ for some some Fano polygon $P \subset \NQ$. Since $Q_b$ has three vertices, $P$ must be a triangle. By fixing a suitable isomorphism, we may assume that $N = \Z^{2}$ with the standard orientation. Label the vertices of $P$ cyclically: $v_1,v_2,v_3$, respecting the orientation i.e.~ so that $\det(v_i,v_{i+1})>0$. Let $m^{i} \in M$ denote the inner normal vector of the cone over the edge $\conv{v_i,v_{i+1}}$. Denote the width and local index of this cone by $w_i,\ell_i$ respectively. We may assume that $m^{1} = (0,1)^{t}$ in the current basis. Write $v_i = (x_i,y_i)$ for $i = 1,2,3$. We will show that conditions~$\lab{1}$ to~$\lab{7}$ are satisfied.

By Definition~\ref{def:bquiv_definition}, the vertices of $Q_b$ are $m^{1},m^{2}$ and $m^{3}$, and the label of each $m^{i}$ is $(w_i,\ell_i)$. The width and local index of a cone are always positive integers. Thus condition~$\lab{2}$ holds. Moreover, arguing as in our proof of the balancing condition (Proposition~\ref{prop:balancing_condition}), we see that:
\begin{equation}\label{eqn:arrows_height_interpretation}
    w_{i+1}\cdot\arr{m^{i},m^{i+1}} = \langle m^{i},v_{i+2} \rangle - \langle m^{i},v_{i+1} \rangle.
\end{equation}
Since $m^{i}$ is the inner normal to the cone over $\conv{v_i,v_{i+1}}$, the quantity $\langle m^{i},v_{i+1} \rangle$ must be negative. On the other hand, since $P$ contains the origin in its strict interior, $\langle m^{i},v_{i+2} \rangle$ must be positive. Thus $\arr{m^{i},m^{i+1}}$ is positive for $i = 1,2,3$. We conclude that $Q_b$ is cyclic and condition~$\lab{1}$ holds. Observing that $m^{1} = (0,1)$ and setting $i = 1$ in~\eqref{eqn:arrows_height_interpretation} we see that the $y$-coordinates of $v_i,v_2,v_3$ are described by~\eqref{eqn:y_coordinates_triangles}. In particular $y_3 > 0$, so~$\lab{3}$ holds. Moreover, the edge/line segment joining $v_1 = (x_1,y_1) = (x_1,-\ell_1)$ and $v_2 = (x_2,y_2) = (x_2,-\ell_2)$ has lattice length $w_i$, so $x_2-x_1 = w_1$. Since $v_1$ and $v_2$ are primitive lattice vectors, the integer $x:=x_2$ satisfies condition~$\lab{4}$. Next, arguing as in Example~\ref{eg:normalized_volume_from_quiver}, we see that:
\begin{equation}\label{eqn:volume_equation_reconstruction_triangles}
    w_i\ell_i = \det(v_i,v_{i+1}) = x_iy_{i+1} - x_{i+1}y_i \text{ for } i = 1,2,3.
\end{equation}
Equation~\eqref{eqn:volume_equation_reconstruction_triangles} for $i = 2$ and $i=3$ can be used to obtain two different expressions for $x_3$. Since both of these must be equal, condition~$\lab{6}$ is satisfied. Finally, a direct calculation of the inner normals of $P$ shows that $m^{2} = (s_2,t_2)$ and $m_3 = (s_3,t_3)$, where the $s_j,t_j$ are as defined above. This immediately shows condition~$\lab{5}$, that $t_2,t_3$ are integers. Condition~$\lab{7}$ now follows from the primitivity of $v_3$ and of $m^2,m^3$. Thus, there exists a vertex of $Q_b$ for which conditions~$\lab{1}$ to~$\lab{7}$ are satisfied.
\end{proof}

\begin{remark}In the above discussion, note that the conditions~$\lab{1}$ to~$\lab{7}$ and quantities $x_i,y_i$ etc.~are all expressed in terms of a nominated vertex of $Q_b$. Suppose that $Q_b$ satisfies~$\lab{1}$ to~$\lab{7}$ with respect to the vertex $m_1$. Then we may construct a Fano triangle $P$ such that the vertices $m_1,m_2,m_3$ of $Q_b$ are identified with the inner normal vectors $m^{1},m^{2},m^{3}$ of the maximal cones in $\Sigma_P$. By choosing a basis such that either $m^{2}$ or $m^{3}$ equals $(0,1)^{t}$ and arguing as in the converse part of Proposition~\ref{prop:reconstruction_triangles}, we see that $Q_b$ also satisfies ~$\lab{1}$ to~$\lab{7}$ with respect to $m_2$ and $m_3$, once the quantities $x_i,y_i$ etc.~have been redefined in terms of the new vertices. Thus, the choice of nominated vertex does not affect whether or not $Q_b$ is block-polygonal.
\end{remark}

\begin{example}\label{eg:reconstruction_for_P113_using_result}Consider the (block) quiver $Q_b$ of Example~\ref{eg:reconstruction_for_P113}, with vertices $m_1,m_2,m_3$ as shown. A direct calculation shows that $Q_b$ satisfies conditions~$\lab{1}$ to~$\lab{7}$ with respect to the vertex $m_1$ and $x:=0$. Therefore, by (the proof of) Proposition~\ref{prop:reconstruction_triangles}, $Q_b$ is the block quiver of the Fano triangle in $\Z^{2}$ with vertices $v_1:=(0,-1), v_2:=(1,-1)$ and $v_3:=(-3,4)$. This triangle is $GL_2(\Z)$-equivalent to the one obtained in Example~\ref{eg:reconstruction_for_P113}.
\end{example}

\begin{remark}[Expected Volume]\label{rmk:expected_volume_triangles}Recall condition~$\lab{6}$ from Proposition~\ref{prop:reconstruction_triangles}, which requires that $x_3 = x_3'$. Using the definitions of $x_3,x_3',x_i,y_j$ etc. we see that this condition, with respect to the vertex $m_1$, is equivalent to the following equality:
\begin{equation}\label{eqn:expected_volume_triangles}
    w_1\ell_1 + w_2\ell_2 + w_3\ell_3 = w_1w_2\cdot \arr{m_1,m_2}.
\end{equation}
Note that this expression is independent of the integer $x$ chosen in condition~$\lab{4}$. The right hand side of~\eqref{eqn:expected_volume_triangles} appears to depend on the vertex $m_1$: indeed, writing condition~$\lab{6}$ for $m_2$ would replace this term by $w_2w_3\cdot\arr{m_2,m_3}$, and for $m_3$ we would have $w_1w_3\cdot\arr{m_3,m_1}$. But these three right hand side terms are equal in the triangle case, by the balancing condition (Proposition~\ref{prop:balancing_condition}). Thus,~\eqref{eqn:expected_volume_triangles} is a \emph{global} condition on a three-vertex cyclic balanced quiver.

If we assume that the block quiver of Proposition~\ref{prop:reconstruction_triangles} comes from a Fano triangle $P$, the equality~\eqref{eqn:expected_volume_triangles} can be given a natural interpretation: a slight modification of Example~\ref{eg:normalized_volume_from_quiver} identifies the left hand side of~\eqref{eqn:expected_volume_triangles} as the normalized volume of $P$, as computed from the block quiver. On the other hand the right hand side, $w_1w_2\cdot\arr{m_1,m_2}$ can be rewritten as $w_1\cdot D(m_1)$. By thinking of $w_1$ as the `base length' of $P$ and $D(m_1)$ as the `height' of $P$, this is the well-known formula for the normalized volume of a triangle. Condition~$\lab{6}$, rewritten as~\eqref{eqn:expected_volume_triangles}, is then just the statement that these two calculations of normalized volume agree i.e.~the \emph{expected volume} of a Fano triangle underlying our block quiver is well-defined.
\end{remark}

\begin{example}As an application of Remark~\ref{rmk:expected_volume_triangles}, consider the (block) quiver $Q_b$ shown in Figure~\ref{fig:polygonal_quivers_are_not_closed_under_mutation} (right). For this quiver, the left hand side of~\eqref{eqn:expected_volume_triangles} equals: $2\cdot1 + 1\cdot1 + 1\cdot1 = 4$, while the right hand side, computed at the vertex $(2,1)$, equals: $2\cdot1\cdot4 = 8$. We conclude that $Q_b$ is not the block quiver of a Fano polygon, because the expected volume is not well-defined.
\end{example}

\begin{example}[Families of Non-Polygonal Quivers]\label{eg:family_of_non_polygonal_quivers} The balanced quiver $Q_1$ shown in Figure~\ref{fig:families_of_non_polygonal_quivers} (left) is polygonal if and only if $a=3$: if $a=3$, then $Q_1$ is obtained from the Fano polygon of $\Proj^2$ in $\Q^2$ with vertex set $\{(1,0),(0,1),(-1,-1)\}$.

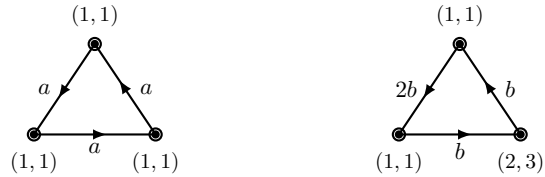
\begin{figure}[h!]
    \centering
    \begin{tikzpicture}[thick,scale=0.8, every node/.style={transform shape}]
      \coordinate  (P2v1) at (-2, -1);
      \coordinate [label={below:\small $(1,1)$}]  (P2v1_lab) at (-2, -1.15);
      \coordinate  (P2v2) at (-4, -1);
      \coordinate [label={below:\small $(1,1)$}]  (P2v2_lab) at (-4, -1.15);
      \coordinate  (P2v3) at (-3, 0.5);
      \coordinate [label={above:\small $(1,1)$}]  (P2v3_lab) at (-3, 0.65);

      \coordinate  (P116v1) at (4, -1);
      \coordinate [label={below:\small $(2,3)$}]  (P116v1_lab) at (4, -1.15);
      \coordinate  (P116v2) at (2, -1);
      \coordinate [label={below:\small $(1,1)$}]  (P116v2_lab) at (2, -1.15);
      \coordinate  (P116v3) at (3, 0.5);
      \coordinate [label={above:\small $(1,1)$}]  (P116v3_lab) at (3, 0.65);

      \draw [fill=black] (P2v1) circle (0.05);
      \draw (P2v1) circle (0.1);
      \draw [fill=black] (P2v2) circle (0.05);
      \draw (P2v2) circle (0.1);
      \draw [fill=black] (P2v3) circle (0.05);
      \draw (P2v3) circle (0.1);

      \draw [fill=black] (P116v1) circle (0.05);
      \draw (P116v1) circle (0.1);
      \draw [fill=black] (P116v2) circle (0.05);
      \draw (P116v2) circle (0.1);
      \draw [fill=black] (P116v3) circle (0.05);
      \draw (P116v3) circle (0.1);

      \draw [middlearrow={latex}] (P2v2) -- node[below] {$a$} (P2v1);
      \draw [middlearrow={latex}] (P2v1) -- node[right] {\hspace{1mm}$a$} (P2v3);
      \draw [middlearrow={latex}] (P2v3) -- node[left] {\hspace{-6mm}$a$} (P2v2);

      \draw [middlearrow={latex}] (P116v2) -- node[below] {$b$} (P116v1);
      \draw [middlearrow={latex}] (P116v1) -- node[right] {\hspace{1mm}$b$} (P116v3);
      \draw [middlearrow={latex}] (P116v3) -- node[left] {\hspace{-7mm}$2b$} (P116v2);

   \end{tikzpicture}
   \caption{Two Families of Non-Polygonal Quivers ($a\neq3$ and $b\neq4$).}\label{fig:families_of_non_polygonal_quivers}
\end{figure}
Conversely, if $Q_1$ is polygonal then (since it equals its own block quiver) it must satisfy Equation~\eqref{eqn:expected_volume_triangles}, from which it follows that $9 = 3a$. A similar argument shows that the balanced quiver $Q_2$ shown in Figure~\ref{fig:families_of_non_polygonal_quivers} (right) is polygonal if and only if $b=4$. In this case, $Q_2$ is obtained from the Fano polygon of $\Proj(1,1,6)$ in $\Q^2$ with vertex set $\{(1,0),(0,1),(-1,-6)\}$.
\end{example}

\subsection{The General Case}\label{subsec:reconstruction_characterization_general}
Consider the block quiver $Q_b$ of a polygonal quiver $Q$. Suppose that~$\lab{1}$ for every vertex $v$ of $Q_b$, the set $\into{v}\cup\outof{v}$ contains all but at most one vertex of $Q_b$ other than $v$ and~$\lab{2}$ $Q_b$ has the Hamiltonian property. Choose a vertex $m_1$ of $Q_b$. Label the vertices in $\outof{m_1}$ as $m_2,\ldots,m_{k-1}$ and those in $\into{m_1}$ as $m_{k+1},\ldots,m_n$. The ordering on the $m_i$ as $i$ increases should coincide with that given by $\seq{m_1}$. If there is a vertex of $Q_b$ different from $m_1$ which does not lie in $\into{m_1}\cup\outof{m_1}$, label it $m_k$. Otherwise, $m_k$ will remain undefined. Let $(w_i,\ell_i) \in \Z^2$ denote the label of $m_i$ for $i = 1,\ldots,n$ and assume that~$\lab{3}$ $w_1,\ell_i \geq 1$ for $i = 1,\ldots,n$. Taking indices modulo $n$, define integers:
\begin{align*}
&y_1 := -\ell_1 &;\quad& y_2 := -\ell_1 &;&\quad y_{i+1} := y_i + w_1\cdot\arr{m_1,m_i} \text{ for } i = 2,\ldots,k-1;\\
&s_1 := 0       &;\quad& t_1 := 1       &;&\quad y_j:= y_{j+1} + w_j\cdot\arr{m_j,m_1} \text{ for } j = n,\ldots,k+1.
\end{align*}
Let $s_k:=0$ and $t_k:=-1$ if $m_k$ exists. Note that $y_k = y_{k+1}$ by the balancing condition at $m_1$. Assume that~$\lab{4}$ $y_k = \ell_k$ if $m_k$ exists and $y_k > 0$ otherwise. Then $y_1,y_2,y_k$ and $y_{k+1}$ are all nonzero. Observe that $y_2,\ldots,y_k$ is strictly increasing and $y_{k+1},\ldots,y_n$ is strictly decreasing.

\begin{remark}There are now four possibilities, depending on whether or not one of $y_3,\ldots,y_{k-1}$ is zero and whether or not one of $y_{k+2},\ldots,y_n$ is zero. In what follows, we will consider the case when one of $y_3,\ldots,y_{k-1}$ is zero and all of $y_{k+2},\ldots,y_n$ are nonzero. The remaining cases can be treated similarly.
\end{remark}

Consider the case when $y_r = 0$ for some fixed $r$ satisfying $3 \leq r \leq k-1$, and all other $y_i$ are nonzero. Assume that~$\lab{5}$ there exists an integer $x$ satisfying $\gcd{x,y_1} = 1$ and $\gcd{x+w,y_2} = 1$. Let $x_{r+1}$ be an arbitrary integer (to be defined later). Define the following rational numbers:
\begin{align*}
&x_1:=x &;\quad& x_2:=x + w_1 &;\quad& x_{i+1}:= (x_iy_{i+1} - w_i\ell_i)/y_i \text{ for }i = 2,\ldots,\widehat{r},\ldots,k-1;\\
&x'_r:=\frac{w_r\ell_r}{y_{r+1}} &;\quad& s_i:= -\arr{m_1,m_i} &;\quad& t_i:= (x_{i+1}-x_i)/w_i \text{ for } i = 2,\ldots,k-1.
\end{align*}
Define the following rational numbers (independent of $x_{r+1}$), with indices taken modulo $n$:
\begin{equation*}
x_j := (x_{j+1}y_j + w_j)/y_{j+1} \quad;\quad s_j := \arr{m_j,m_1} \quad;\quad t_j := (x_{j+1}-x_j)/w_j \text{ for }j=n,\ldots,k+1.
\end{equation*}
Assume~$\lab{6}$ $x_r = x'_r$ and that~$\lab{7}$ there exists an integer $x_{r+1}$ such that the following conditions are satisfied:~$\lab{7a}$ $t_i$ is an integer for $i = 2,\ldots,n$,~$\lab{7b}$ $\gcd{x_i,y_i} = 1$ and $\gcd{s_i,t_i} = 1$ for $i = 2,\ldots,n$ and~$\lab{7c}$ $x_k = x_{k+1} + w_k$ if $m_k$ exists and $x_k = x_{k+1}$ otherwise. Assume that~$\lab{8}$ $s_it_j - s_jt_i = \arr{m_i,m_j}$ whenever $j \neq i-1,i+1$ and~$\lab{9}$ $s_ix_j + t_iy_j > -\ell_i$ whenever $j \neq i,i+1$.

\begin{thm}\label{thm:reconstruction_general}If $Q_b$ satisfies conditions~$\lab{1}$ to~$\lab{9}$ for some vertex $m_1$ then it is the block quiver of a Fano polygon. Conversely, if $Q_b$ is the block quiver of a Fano polygon then there is a vertex $m_1$ for which conditions~$\lab{1}$ to~$\lab{9}$ are satisfied.
\end{thm}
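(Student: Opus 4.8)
The plan is to follow the template of Proposition~\ref{prop:reconstruction_triangles}, now carrying the extra bookkeeping forced by having $n$ edges and by vertices that may lie on a coordinate axis. For the forward direction, assume $Q_b$ satisfies $\lab{1}$--$\lab{9}$ with respect to $m_1$. Set $N = \Z^{2}$ with the standard orientation, put $v_i := (x_i, y_i) \in \NQ$ and $m^{i} := (s_i, t_i) \in \MQ$, and let $P := \conv{v_1, \ldots, v_n}$. The goal is to prove that $P$ is a Fano polygon and that $Q_b \cong \bquiv(N, P)$, with the vertex $m_i$ of $Q_b$ corresponding to the inner normal $m^{i}$.

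First I would record the shape of the $y$-coordinates: since the widths and local indices are positive integers by $\lab{3}$ and every $\arr{m_1, m_i}$ with $m_i \in \outof{m_1}$ is a positive integer, the sequence $y_2, \ldots, y_k$ is strictly increasing and the sequence $y_{k+1}, \ldots, y_n$ is strictly decreasing; together with $\lab{4}$ this determines where a $y$-coordinate can vanish and justifies the case split (I would carry out the case $y_r = 0$, the others being parallel). Condition $\lab{7a}$ makes $x_r$, hence every $x_i$, an integer, and $\lab{5}$, $\lab{7b}$ make each $v_i$ and each $m^{i}$ a primitive lattice vector; the $v_i$ are pairwise distinct because the $y$-coordinates are strictly monotone on each arc and the two arcs have opposite signs. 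A direct determinant computation, using $\lab{6}$ at the crossing index $r$ and $\lab{7c}$ at the index $k$, gives $\langle m^{i}, v_i \rangle = \langle m^{i}, v_{i+1} \rangle = -\ell_i$ for every $i$, while $\lab{9}$ gives $\langle m^{i}, v_j \rangle > -\ell_i$ for $j \neq i, i+1$. These identities and inequalities show that each $\conv{v_i, v_{i+1}}$ is genuinely an edge of the convex hull $P$, that $m^{i}$ is its inner normal vector, and that $\orig$ lies in the strict interior of $P$; hence $P$ is a Fano $n$-gon. Reading off widths and local indices from the primitive edge directions exactly as in Proposition~\ref{prop:reconstruction_triangles} shows that the label of $m^{i}$ in $\bquiv(N,P)$ is $(w_i, \ell_i)$, and computing the determinants $\det(m^{i}, m^{j})$---which equal $\arr{m_i, m_j; Q_b}$ for adjacent $i, j$ by construction of the $m^{i}$ and for non-adjacent $i, j$ by $\lab{8}$---shows $\bquiv(N,P) \cong Q_b$.

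For the converse, suppose $Q_b \cong \bquiv(N, P)$ for a Fano polygon $P$; fixing an orientation-preserving isomorphism, take $N = \Z^{2}$ with the standard orientation. By Proposition~\ref{prop:polygonal_block_has_hamiltonian_property}, $Q_b$ has the Hamiltonian property, so $\lab{2}$ holds and the vertices of $Q_b$ are the inner normal vectors $m^{1}, \ldots, m^{n}$ of the edges of $P$, cyclically ordered by $\seq{m_1}$; condition $\lab{1}$ is Remark~\ref{rmk:in_out_covers_all_but_one}, and $\lab{3}$ is the positivity of widths and local indices. After a further change of basis we may assume $m^{1} = (0,1)^{t}$; then, exactly as in the proof of Proposition~\ref{prop:balancing_condition}, the arrow counts into and out of $m_1$ recover the coordinate increments, which produces the stated formulas for the $y_i$ and, through $\lab{4}$, pins down the axis-crossing vertex. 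The volume identity $\det(v_i, v_{i+1}) = w_i\ell_i$ of Example~\ref{eg:normalized_volume_from_quiver} gives the recursions defining the $x_i$; its consistency at the crossing index is precisely $\lab{6}$, while $\lab{7a}$--$\lab{7c}$ encode integrality, primitivity, and the fact that the two chains of $x$-coordinates meet correctly, and $\lab{5}$ comes from the primitivity of $v_1, v_2$. Finally, $\lab{8}$ is the determinant interpretation $\arr{m^{i}, m^{j}; Q_b} = \det(m^{i}, m^{j})$ from Definition~\ref{def:bquiv_definition} applied to non-adjacent edges, and $\lab{9}$ is the assertion that every vertex $v_j$ other than $v_i, v_{i+1}$ lies strictly on the positive side of the supporting hyperplane of the edge $\conv{v_i, v_{i+1}}$, which holds because $P$ is convex and contains $\orig$ in its strict interior.

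The main obstacle is organizational rather than computational: one must run the argument through each of the four configurations of vanishing $y_i$ flagged in the remark preceding the theorem, and, in the forward direction, verify that the $v_i$ genuinely form a convex $n$-gon traversed in the prescribed cyclic order with the $m^{i}$ as inner normals---ruling out collinear consecutive vertices and coincidences---rather than merely checking that the inner-product identities hold formally. Condition $\lab{9}$ carries most of this load, while the monotonicity of the $y$-coordinates and the coprimality hypotheses exclude the degenerate cases; keeping the modular index conventions consistent across the $\outof{m_1}$-arc, the optional vertex $m_k$, and the $\into{m_1}$-arc is where the bookkeeping must be handled with care.
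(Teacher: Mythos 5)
Your proposal is correct and follows essentially the same route as the paper: define $v_i=(x_i,y_i)$ and $m^{i}=(s_i,t_i)^t$, use $\lab{5}$, $\lab{7a}$, $\lab{7b}$ for primitivity, verify $\langle m^{i},v_i\rangle=\langle m^{i},v_{i+1}\rangle=-\ell_i$ (with $\lab{6}$ at the crossing index and $\lab{4}$ at $k$), invoke $\lab{9}$ and $\lab{3}$ to get a Fano polygon, and then transplant the label/arrow checks and the converse from the triangle case. The paper's own proof is in fact terser than your write-up---it stops after establishing that $P$ is a Fano polygon and defers the remainder to ``a similar manner to Proposition~\ref{prop:reconstruction_triangles}''---so the extra detail you supply (the monotonicity of the $y_i$, the role of $\lab{8}$ for non-adjacent arrows, and the sourcing of each condition in the converse) is exactly the intended completion.
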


\begin{proof}Unless otherwise stated, all indices will be taken modulo $n$. Suppose that $Q_b$ satisfies conditions~$\lab{1}$ to~$\lab{9}$ for some vertex $m_1$. Let $N = \Z^2$ with the standard orientation. Define $v_i:=(x_i,y_i) \in \NQ$ and $m^{i}:=(s_i,t_i)^{t} \in M$ for $i = 1,\ldots,n$. Note that $m^{k}$ is defined if and only if $m_k$ is defined. Otherwise, $m^{k}$ will remain undefined.

The first step is to show that $P:=\conv{v_1,\ldots,v_n} \subset \NQ$ is a Fano polygon. Both the $v_i$ and $m^{i}$ are primitive lattice vectors by~$\lab{5}$,~$\lab{7a}$ and~$\lab{7b}$. A direct calculation shows that:
\begin{equation}\label{eqn:inner_normal_equations_general}
    \langle m^{i},v_i \rangle = \langle m^{i},v_{i+1} \rangle = -\ell_i \text{ for } i=1,\ldots,n.
\end{equation}
Note that the case $i = r$ in~\eqref{eqn:inner_normal_equations_general} is equivalent to condition~$\lab{6}$ and we have used condition~$\lab{4}$ when $i = k$ (if $m_k$ exists). In the present notation, condition~$\lab{9}$ becomes $\langle m^{i},v_j \rangle > -\ell_i$ whenever $j \neq i,i+1$. Thus, $P$ is a two-dimensional lattice polygon (the intersection of finitely many half spaces) with primitive vertices $v_1,\ldots,v_n$. It also follows from conditions~$\lab{9}$ and~$\lab{3}$ that the origin lies in the strict interior of $P$, so that $P$ is a Fano polygon. The remainder of the proof now follows in a similar manner to that of Proposition~\ref{prop:reconstruction_triangles}.
\end{proof}

\section{Remarks on Higher Dimensions}\label{sec:higher_dimensions}

The notion of standard refinement, introduced in Section~\ref{subsec:background}, is fundamental to our definition of polygonal quivers. To define a standard refinement in any given dimension, one must first classify the smallest cones of that dimension for which a combinatorial mutation exists. In dimension $2$ these are precisely the primitive $T$-cones~\cite[Lemma 3.2]{AThesis} and by~\cite[Corollary~2.6]{AK14} they correspond to primitive $T$-singularities, introduced in~\cite{KS88}.

In dimensions greater than $2$, there is at present neither a classification of minimally mutable cones, nor a good understanding of $T$-singularities. This is the main obstruction to defining higher dimensional analogues of polygonal quivers. Nevertheless, it is natural to expect that the higher dimensional theory will possess similar features to those seen in dimension $2$. In particular, the analogue of Proposition~\ref{prop:quiv_mutation_commutes_with_comb_mutation_at_Tcones} should hold: higher polygonal quivers should admit a notion of mutation that is compatible with higher dimensional combinatorial mutations.

A Fano polytope of dimension $n \geq 3$ can admit \emph{nontrivial} combinatorial mutations of codimensions $1,\ldots,n-1$. Here, the codimension of a mutation means $n$ minus the dimension of the factor for that mutation. See~\cite{ACGK12} for the definition of factor. This observation suggests that the `higher polygonal quiver' of an $n$-dimensional Fano polytope $P$ should be an oriented simplicial complex $K(P)$, with decorated faces. A codimension $d$ mutation of $P$ should correspond to a `mutation' of $K(P)$ at face(s) of dimension $d-1$, where $d \in \{1,\ldots,n-1\}$.

In contrast to a standard refinement, the spanning fan of a Fano polytope is well-defined in all dimensions (the definition is the same as that given in Section~\ref{subsec:background}). Thus, following Definition~\ref{def:bquiv_definition}, we may define the block complex $K_b(P)$ directly, bypassing the difficulties discussed above. For brevity, we will restrict ourselves to defining the underlying complex of $K_b(P)$, leaving considerations such as face labels and mutations to future work.

\begin{definition}Let $N \cong \Z^{n}$ be a lattice of rank $n$ with a chosen orientation. For any Fano polytope $P \subset \NQ$, the vertex set of the (undecorated) block complex $K_b(P) = K_b(N,P)$ is the set of inner normal vectors of maximal cones in the spanning fan $\spanf{P}$:
\[
    \verts{K_b(P)} = K^{0}_b(P) := \{m_\sigma \in \Hom{N,\Z} \mid \sigma \text{ is a maximal cone in }\spanf{P}\}.
\]
For every $n$-element subset $\{m_1,\ldots,m_n\} \subset K^{0}_b(P)$, the number of $(n-1)$-simplices in $K_b(P)$ with vertex set $\{m_1,\ldots,m_n\}$ is $|\det(m_1,\ldots,m_n)|$. Every one of these simplices carries the same orientation, chosen by (re-)numbering the vertices so that $\det(m_1,\ldots,m_n)$ is positive.
\end{definition}

\begin{example}\label{eg:block_complex_P3_P1119}Let $N = \Z^{3}$ with the standard orientation. Let $P_1 \subset \Q^3$ be the Fano polytope of $\Proj^3$ with vertex set $\{(-1,-1,-1),(-1,0,-1),(0,-1,-1),(2,2,3)\}$. The inner normal vectors are $m_1 = (0,0,1)^{t}, m_2 = (4,0,-3)^{t}, m_3 = (0,4,-3)^{t}$ and $m_4 = (-4,-4,5)^{t}$. The complex $K_b(P_1)$ is shown in Figure~\ref{fig:P3_P1119_block_complexes} (left), with vertices $m_4$ identified.

\begin{figure}[h!]
    \centering
    \begin{tikzpicture}[thick,scale=0.8, every node/.style={transform shape}]
    \usetikzlibrary{backgrounds}
       \coordinate [label={above:\small $m_1$}] (K1v1) at (-3, 1);
            \draw [fill=black] (K1v1) circle (0.07);
      \coordinate [label={left:\small $m_2$}] (K1v2) at (-4, -0.5);
            \draw [fill=black] (K1v2) circle (0.07);
      \coordinate [label={right:\small $m_3$}] (K1v3) at (-2, -0.5);
            \draw [fill=black] (K1v3) circle (0.07);
      \coordinate [label={right:\small $m_4$}] (K1v4r) at (-1, 1);
            \draw [fill=black] (K1v4r) circle (0.07);
      \coordinate [label={left:\small $m_4$}] (K1v4l) at (-5, 1);
            \draw [fill=black] (K1v4l) circle (0.07);
      \coordinate [label={below:\small $m_4$}] (K1v4b) at (-3,-2);
            \draw [fill=black] (K1v4b) circle (0.07);

       \coordinate [label={\small $16$}] (K1_123) at (-3, -0.25);
       \coordinate [label={\small $16$}] (K1_124l) at (-4.15, 0.25);
       \coordinate [label={\small $16$}] (K1_124r) at (-1.85, 0.25);
       \coordinate [label={\small $16$}] (K1_234b) at (-3, -1.5);

      \draw [middlearrow={latex}] (K1v4l) -- (K1v2);
      \draw [middlearrow={latex}] (K1v2) -- (K1v4b);
      \draw [middlearrow={latex}] (K1v4b) -- (K1v3);
      \draw [middlearrow={latex}] (K1v3) -- (K1v4r);
      \draw [middlearrow={latex}] (K1v4r) -- (K1v1);
      \draw [middlearrow={latex}] (K1v1) -- (K1v4l);
      \draw[bend left=16,middlearrow={latex}]  (K1v1) to node [auto] {} (K1v2);
      \draw[bend left=16,middlearrow={latex}]  (K1v2) to node [auto] {} (K1v1);
      \draw[bend left=16,middlearrow={latex}]  (K1v1) to node [auto] {} (K1v3);
      \draw[bend left=16,middlearrow={latex}]  (K1v3) to node [auto] {} (K1v1);
      \draw[bend left=16,middlearrow={latex}]  (K1v2) to node [auto] {} (K1v3);
      \draw[bend left=16,middlearrow={latex}]  (K1v3) to node [auto] {} (K1v2);
      \begin{scope}[on background layer]
            \path [fill=lightgray!50] (K1v2) to [bend left=16] (K1v1) -- (K1v4l) -- (K1v2);
            \path [fill=lightgray!50] (K1v1) to [bend left=16] (K1v2) to [bend left=16] (K1v3) to [bend left=16] (K1v1);
            \path [fill=lightgray!50] (K1v1) to [bend left=16] (K1v3) -- (K1v4r) -- (K1v1);
            \path [fill=lightgray!50] (K1v2) -- (K1v4b) -- (K1v3) to [bend left=16] (K1v2);
      \end{scope}

      \coordinate [label={above:\small $m_1$}] (K2v1) at (3, 1);
            \draw [fill=black] (K2v1) circle (0.07);
      \coordinate [label={left:\small $m_2$}] (K2v2) at (2, -0.5);
            \draw [fill=black] (K2v2) circle (0.07);
      \coordinate [label={right:\small $m_3$}] (K2v3) at (4, -0.5);
            \draw [fill=black] (K2v3) circle (0.07);
      \coordinate [label={right:\small $m_4$}] (K2v4r) at (5, 1);
            \draw [fill=black] (K2v4r) circle (0.07);
      \coordinate [label={left:\small $m_4$}] (K2v4l) at (1, 1);
            \draw [fill=black] (K2v4l) circle (0.07);
      \coordinate [label={below:\small $m_4$}] (K2v4b) at (3,-2);
            \draw [fill=black] (K2v4b) circle (0.07);

       \coordinate [label={\small $16$}] (K2_123) at (3, -0.25);
       \coordinate [label={\small $16$}] (K2_124r) at (4.15, 0.25);
       \coordinate [label={\small $16$}] (K2_124l) at (1.85, 0.25);
       \coordinate [label={\small $48$}] (K2_234b) at (3, -1.5);

      \draw [middlearrow={latex}] (K2v4l) -- (K2v1);
      \draw [middlearrow={latex}] (K2v1) -- (K2v4r);
      \draw [middlearrow={latex}] (K2v4r) -- (K2v3);
      \draw [middlearrow={latex}] (K2v3) -- (K2v4b);
      \draw [middlearrow={latex}] (K2v4b) -- (K2v2);
      \draw [middlearrow={latex}] (K2v2) -- (K2v4l);
      \draw[bend right=16,middlearrow={latex}]  (K2v1) to node [auto] {} (K2v2);
      \draw[bend right=16,middlearrow={latex}]  (K2v2) to node [auto] {} (K2v1);
      \draw[bend right=16,middlearrow={latex}]  (K2v1) to node [auto] {} (K2v3);
      \draw[bend right=16,middlearrow={latex}]  (K2v3) to node [auto] {} (K2v1);
      \draw[bend right=16,middlearrow={latex}]  (K2v2) to node [auto] {} (K2v3);
      \draw[bend right=16,middlearrow={latex}]  (K2v3) to node [auto] {} (K2v2);
      \begin{scope}[on background layer]
            \path [fill=lightgray!50] (K2v2) to [bend left=16] (K2v1) -- (K2v4l) -- (K2v2);
            \path [fill=lightgray!50] (K2v1) to [bend left=16] (K2v2) to [bend left=16] (K2v3) to [bend left=16] (K2v1);
            \path [fill=lightgray!50] (K2v1) to [bend left=16] (K2v3) -- (K2v4r) -- (K2v1);
            \path [fill=lightgray!50] (K2v2) -- (K2v4b) -- (K2v3) to [bend left=16] (K2v2);
      \end{scope}
\end{tikzpicture}
\caption{The Block Complexes for $\Proj^{3}$ (left) and $\Proj(1,1,1,9)/\mu_3$ (right).}\label{fig:P3_P1119_block_complexes}
\end{figure}
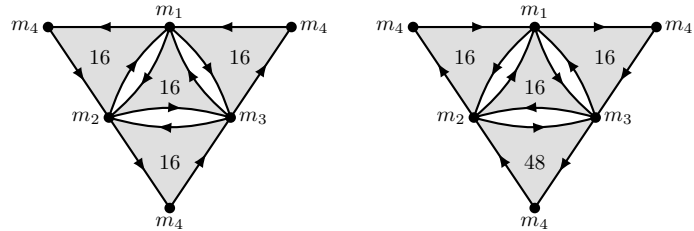
Note that $\Proj^{3}$ is smooth, so every maximal cone of the spanning fan $\Sigma_{P_1}$ is (trivially) a primitive $T$-cone. In particular, $\Sigma_{P_1}$ is its own standard refinement and $K_b(P_1) = K(P_1)$ in this example. Next, consider $P_2 \subset \Q^{3}$ with vertex set $\{(-1,-1,-1),(2,5,3),(5,2,3),(2,2,3)\}$. The spanning fan of $P_2$ defines a quotient of $\Proj(1,1,1,9)$ by the cyclic group $\mu_3$ and $P_2$ is a codimension-1 mutation of $P_1$ with respect to the width vector $(0,0,1)^{t}$ and factor $\conv{\orig, (0,1,0), (1,0,0)} \subset \Q^{3}$. The inner normals are $m_1 = (0,0,-1)^{t}, m_2 = (4,0,-3)^{t}, m_3 = (0,4,-3)^{t}, m_4 = (-4,-4,9)^{t}$ and $K_b(P_2)$ is shown in Figure~\ref{fig:P3_P1119_block_complexes} (right), with $m_4$ identified.
\end{example}

\subsection*{Acknowledgements}
We thank the following colleagues for many interesting conversations: P.~Bousseau, T.~Coates, M.~Kontsevich, V.~Pestun, K.~Rietsch and T.~Sutherland. This work was carried out during the author's stay as an EPSRC-funded William Hodge Fellow at the Institut des Hautes \'{E}tudes Scientifiques. Much of the writing was completed while the author was a Visiting Research Fellow at King's College London. We thank both the IH\'{E}S and King's College London for excellent working conditions.

\bibliographystyle{abbrv}
\bibliography{bibliography}
\end{document}